\newtheorem{theorem}{\color{black}\indent Theorem}
\newtheorem{lemma}{\color{black}\indent Lemma}[section]
\newtheorem{proposition}{\color{black}\indent Proposition}
\newtheorem{remark}{\color{black}\indent Remark}[section]
\DeclareMathOperator{\diag}{{diag}}
\journal{a}%{Mathematische Annalen}
\begin{document}

\begin{frontmatter}

%% Title, authors and addresses

%% use the tnoteref command within \title for footnotes;
%% use the tnotetext command for theassociated footnote;
%% use the fnref command within \author or \address for footnotes;
%% use the fntext command for theassociated footnote;
%% use the corref command within \author for corresponding author footnotes;
%% use the cortext command for theassociated footnote;
%% use the ead command for the email address,
%% and the form \ead[url] for the home page:
%% \title{Title\tnoteref{label1}}
%% \tnotetext[label1]{}
%% \author{Name\corref{cor1}\fnref{label2}}
%% \ead{email address}
%% \ead[url]{home page}
%% \fntext[label2]{}
%% \cortext[cor1]{}
%% \address{Address\fnref{label3}}
%% \fntext[label3]{}

\title{Melnikov's persistence for completely degenerate Hamiltonian systems}

\author{{Jiayin Du$^{a}$ \footnote{ E-mail address : dujy668@nenu.edu.cn} ,~Shuguan Ji$^{a,*}$ \footnote{E-mail address : jisg100@nenu.edu.cn},~Yong Li$^{b,a}$ \footnote{E-mail address : liyong@jlu.edu.cn\\\indent ~*Corresponding author} }\\
{$^{a}$School of Mathematics and Statistics, and Center for Mathematics and Interdisciplinary Sciences, Northeast Normal
University, Changchun 130024, P.R.  China.}\\
{$^{b}$School of Mathematics, Jilin University, Changchun 130012, P.R. China.}
 }

\begin{abstract}
In this paper, we study the Melnikov's persistence for completely degenerate Hamiltonian systems with the following Hamiltonian
\begin{equation*}
H(x,y,u,v)=h(y)+g(u,v)+\varepsilon P(x,y,u,v),~~~(x,y,u,v)\in \mathbb{T}^n\times{G}\times \mathbb{R}^d\times \mathbb{R}^d,
\end{equation*}
where $n\geq2$ and $d\geq1$ are positive integers, $G\subset\mathbb{R}^n$, $g=o(|u|^2+|v|^2)$ admits complete degeneracy and certain transversality, and $\varepsilon P$ is the small perturbation.
This is a try in studying lower-dimensional invariant tori in the normal complete degeneracy.
Under {R\"{u}ssmann-like} non-degenerate condition and transversality condition, we apply the homotopy invariance of topological degree to remove the first order terms about $u$ and $v$ and employ the quasi-linear KAM iterative procedure to derive the persistence of lower-dimensional invariant tori.
\end{abstract}

\begin{keyword}
Hamiltonian system, complete degeneracy, Melnikov's persistence.
\end{keyword}

%\begin{keyword}
% Delay differential system \sep Exponential dichotomy\sep Fixed point theorem.

%% PACS codes here, in the form: \PACS code \sep code

%\MSC[2010] 70H05 \sep 70H12
%% or \MSC[2008] code \sep code (2000 is the default)

%\end{keyword}

\end{frontmatter}

%% \linenumbers

%% main text

\tableofcontents
%\newpage

\section{Introduction}\label{sec:1}
In this paper, we consider the following Hamiltonian system \begin{equation}\label{eq1}
H(x,y,u,v)=h(y)+g(u,v)+\varepsilon P(x,y,u,v),
\end{equation}
where $(x,y,u,v)\in \mathbb T^n\times{G}\times \mathbb{R}^d\times \mathbb{R}^d$, $n\geq2$ and $d\geq1$ are positive integers, $\mathbb T^{n}$ is the standard $n$-torus, $G\subset{\mathbb{R}^n}$ is a bounded closed region,  $g=o\left(|u|^2+|v|^2\right)$ and is therefore completely degenerate; $h$, $g$ and $P$ are real analytic functions in $(x,y,u,v)$, $\varepsilon>0$ is a small parameter and $\varepsilon P$ is the small perturbation. We will explore the persistence of lower-dimensional invariant tori for the completely degenerate Hamiltonian system under certain high degeneracy conditions. {Such Melnikov's persistence in the degenerate cases is very difficult to study.}

Let's review some developments on Melnikov's persistence. Consider the normal form:
$$N=\langle\omega,y\rangle+\frac{1}{2}\langle z,\Omega  z\rangle,$$
where $(y,z)\in \mathbb{R}^n\times \mathbb{R}^{2d}$, $n\geq2$, $d\geq1$. In non-degenerate case, i.e., $\Omega$ is non-singular, if all the eigenvalues of $J\Omega$ belong to $i\mathbb{R}^1\backslash\{0\}$ ($J$ is the standard symplectic matrix), we say that the lower-dimensional invariant torus is elliptic, which corresponds to the following normal form:
\begin{align*}
N=\langle\omega,y\rangle+\frac{1}{2}\sum_{j\geq1}^d\Omega_j\left(u_j^2+v_j^2\right),
%N=\langle\omega,y\rangle+\frac{1}{2}|u|^{2}+\frac{1}{2}|v|^{2},
\end{align*}
where $(y,z)\in \mathbb{R}^n\times \mathbb{{R}}^{2d}$, $z=(u,v)\in\mathbb{{R}}^{d}\times\mathbb{{R}}^{d}$.
In 1965, Melnikov \cite{melnikov} announced that the elliptic lower-dimensional invariant tori can persist for any small perturbations under certain coupling nonresonance conditions, called Melnikov conditions:
\begin{align*}
\langle k,\omega\rangle+\langle l,\Lambda\rangle\neq0,~~~~\forall (k,l)\in\left(\mathbb{Z}^n\times\mathbb{Z}^{2d}\right)\setminus\{0\},~~|l|\leq2,
\end{align*}
where $\Lambda$ is the column vector formed by the eigenvalues of $\Omega$. A detailed and rigorous proof of the above Melnikov's theorem was due to Eliasson \cite{Eliasson} and then an infinite dimensional analogue was obtained by Kuksin \cite{kuksin1}, P\"{o}schel \cite{poschel}, Bourgain \cite{bourgain1} and Wayne \cite{wayne}. On the other hand, if all the eigenvalues of $J\Omega$ are not on the imaginary axis, the lower-dimensional invariant torus is called hyperbolic, which is associated with the unperturbed system
\begin{align*}
N=\langle\omega,y\rangle+\frac{1}{2}\sum_{j\geq1}^d\Omega_j\left(u_j^2-v_j^2\right).
\end{align*}
The persistence problem for hyperbolic tori was first considered in the work of Moser \cite{moser1} and later studied by Graff \cite{graff} and Zehnder \cite{zehnder75,zehnder76} for a fixed toral frequency $\omega$ satisfying the following Diophantine condition: \begin{align}\label{dio}
|\langle k,\omega\rangle|>\frac{\gamma}{|k|^\tau},~~~k\in{\mathbb{Z}^n\setminus{\{0\}}},
\end{align}
where $k=(k_1,\cdots,k_n)$, $|k|=|k_1|+\cdots+|k_n|$, $\gamma>0$ and $\tau> n-1$.
Moreover, the persistence of lower-dimensional invariant tori has been extensively studied in various (normally) non-degenerate cases, see \cite{bourgain,cheng1,chier,Eliasson,gallatti,melnikov,poschel,qian3,xulu}  and references therein.

However, in the degenerate cases, i.e., $\Omega$ is singular, generally, {the lower-dimensional invariant tori might be destroyed under small perturbations.} Then a natural question is what degenerate systems admit Melnikov's persistence, which is an essential problem, for example, see \cite{kuksin}. A normal form in the simplest degenerate case is, see Takens \cite{takens},
$$N=\langle\omega,y\rangle+\frac{1}{2}v^2+f(u,v),$$ where $y\in \mathbb{R}^n$, $f(u,v)=o(|u|^2+|v|^2)\neq0$, $u,v\in \mathbb{R}^1$. You in \cite{you} proved the persistence of a hyperbolic-type degenerate lower-dimensional torus for Hamiltonian systems with the following normal form
$$N=\langle\omega,y\rangle+\frac{1}{2}v^2-u^{2l},~~~l\geq2,$$
where $(y,u,v)\in \mathbb{R}^n\times \mathbb{R}^1\times \mathbb{R}^1$. For more details see \cite{xu}. Li and Yi \cite{li} showed the persistence of lower-dimensional tori of general types for the Hamiltonian system with the following general normal form
$$N=e+\langle\omega,y\rangle+\frac{1}{2}\left\langle\left(\begin{array}{c}
y\\
z
\end{array}\right),M\left(\begin{array}{c}
y\\
z
\end{array}\right)\right\rangle+O\left(|(y,z)|^3\right),$$
where $(y,z)\in \mathbb{R}^n\times \mathbb{R}^{2d}$, $d\geq1$, and $M$ is nonsingular. Furthermore, Han, Li and Yi \cite{han} study degenerate Hamiltonian system with the following normal form
\begin{align*}
N=e+\langle\omega,y\rangle+\frac{1}{2}\langle z,M(\omega)z\rangle+\varepsilon P(x,y,z,\omega),
\end{align*}
where $(y,z)\in\mathbb{R}^n\times \mathbb{R}^{2d}, d\geq1$, and $M(\omega)$ can be singular. They impose some conditions on perturbation $P$ to remove the singularity of $M(\omega)$ and hence yield the persistence of the majority of lower-dimensional invariant tori. Recently, Hu and Liu \cite{hu} investigate completely degenerate Hamiltonian system with the following norm form
$$N=\langle\omega,y\rangle+\frac{1}{2p}u^{2p}+\frac{1}{2q}v^{2q}+y_1^mu+y_2^lv,$$
where $(y,u,v)\in\mathbb{R}^n\times\mathbb{R}^1\times\mathbb{R}^1$, $p, q>1$ and $m, l$ are positive integers. As they pointed out that the high order terms $y_1^mu$ and $y_2^lv$ are needed to control the normal directions.

%In this paper, we consider the completely degenerate Hamiltonian system (\ref{eq1}), which possess the general-type norm $g=o\left(|u|^2+|v|^2\right)$ and high co-dimension, i.e., $d\geq1$, and study the Melnikov's persistence without any restriction on
%perturbation but only smallness and analyticity.

Obviously, in the cases we are considering system (\ref{eq1}) is completely degenerate and $g=o\left(|u|^2+|v|^2\right)$. We will study the Melnikov's persistence without any restriction on
perturbation but only smallness and analyticity. So a fundamental question is what conditions do we need to add to the system to ensure persistence? To this aim, we have done the following:

\emph{We construct some sufficient conditions in term of the topological degree condition as well as the weak convexity condition for $g(u,v)$, see {(A0)} in Section \ref{sec:2} for details of these two conditions. We use the technique of translating normal direction to remove the first order term about $u$ and $v$. We employ the quasi-linear KAM iterative procedure to prove the persistence of lower-dimensional invariant tori. We also give an example to state that our condition {(A0)} is {sharp} to Melnikov's persistence, see Proposition \ref{pro2}.}

The paper is organized as follows. In Section \ref{sec:2}, we state our main results (Theorem \ref{th1}, Proposition \ref{pro1} and Proposition \ref{pro2}). Section \ref{sec:3} contains the detailed construction and estimates for one cycle of KAM steps. In Section \ref{sec:4}, we complete the proof of Theorem \ref{th1} by deriving an iteration lemma and showing the convergence of KAM iterations. Finally, the proof of Proposition \ref{pro1} and Proposition \ref{pro2} can be found in Appendix A and Appendix B, respectively.

\section{Main results}\label{sec:2}
\setcounter{equation}{0}
To state our main results, we first need to introduce a few definitions and notations.
\begin{itemize}
\item[{(1)}] Given a domain $D$. Let $\bar{D}$, $\partial D$ denote the closure of $D$ and the boundary of $D$,  respectively. $D^o:=\bar{D}\setminus\partial D$ refers to the interior.
\item[{(2)}] {We shall use the same symbol $|\cdot|$ to denote the Euclidean norm of vectors, and use $|\cdot|_D$ to denote the supremum norm of a function on a domain $D$.}
\item[{(3)}] For any two complex column vectors $\xi$, $\eta$ of the same dimension, $\langle\xi,\eta\rangle$ always stands for $\xi^\top\eta$, i.e., the transpose of $\xi$ times $\eta$.
\item[{(4)}] {$id$ is the identical mapping, and $I_d$ is the $d$-order unit matrix.}
%\item[{(5)}] For a vector value function $f$, $Df$ denotes the Jacobian matrix of $f$, and $J_f=det Df$ its Jacobian determinant.
\item[{(5)}] All Hamiltonians in the sequel are endowed with the standard symplectic structure.
 % \item The real  analyticity of the Hamiltonian $H(x,y,u,v)$ about $x$, $y$, $u$, $v$ on $G\times{\mathbb{T}^n}\times\mathbb{R}^d\times\mathbb{R}^d$ implies that the analyticity extends to a complex neighbourhood $D(s,r)$ of $G\times{\mathbb{T}^n}\times\mathbb{R}^d\times\mathbb{R}^d$, where $D(s,r)$ is defined by
%$$D(s,r):=\{(y,x):|y|<s^2,|u|<s,|v|<s,|\textrm{Im}x|<r\},~~~0<s,r<1.$$
\item[{(6)}] {$B_\delta(\zeta)$ stands for a sphere with $\zeta$ as center and $\delta$ as radius.}
%{\color{red}For $\forall \delta>0$, $y_0\in G$, let
%  \begin{align*}
%B_\delta(y_0)&=:\{y\in G:|y-y_0|<\delta\},\\
%\bar B_\delta(y_0)&=:\{y\in G:|y-y_0|\leq\delta\}.
%\end{align*}}
%\item[{(8)}] For simplicity we let $z=(u,v)\in \mathbb{R}^{2d}$, $\nabla g(z)=\left(\frac{\partial g}{\partial u},\frac{\partial g}{\partial v}\right)$.
\item[{(7)}] Denote the complex neighborhood of $\mathbb{T}^n\times\{0\}\times\{0\}$
    \begin{align}\label{D}
D(s,r)=\left\{(x,y,z):|\textrm{Im}x|<r,|y|<s^2,|z|<s\right\}
\end{align}
with $0<s,r<1$.

\end{itemize}
%Let $\Omega\subset \mathbb{R}^n$ be a bounded and open domain. We give the definition of the degree for $f\in C^2(\bar\Omega,\mathbb{R}^n)$, see \cite{mot}.
%\begin{definition}
%If $f\in C^2(\bar\Omega,\mathbb{R}^n)$ and $p\in \mathbb{R}^n\setminus f(\partial\Omega)$.
%%, let $\varsigma=\inf_{x\in\partial\Omega}||f(x)-p||$.
%\begin{itemize}
%  \item[(1)] Denote $N_f=\left\{x\in\Omega|J_f(x)=0\right\}$. If $p\notin f(N_f)$, then
%\begin{align*}
%\deg\left(f,\Omega,p\right):=\sum_{x\in { f^{-1}(p)}} sign\left(J_{f}(x)\right),
%\end{align*}
%setting $\deg\left(f,\Omega,p\right)=0$ if $f^{-1}(p)={\O}$.
%  \item[(2)] If $p\in f(N_f)$, by Sard theorem (see \cite{mot}),  $\deg(f,\Omega,p)$ is  defined as $\deg(f,\Omega,p):=\deg(f,\Omega,p_1)$, for some (and all) $p_1\notin f(N_f)$ such that $|p_1-p|\ll 1$.%, then by (1), $\deg(f,\Omega,p_1)$ is well defined, and
%     % \begin{align*}
%%\deg(f,\Omega,p):=\deg(f,\Omega,p_1),
%%\end{align*}
%%{\color{blue}{where $\deg(f,\Omega,p_1)$ doesn't change with the choice of $p_1$, i.e., $\deg(f,\Omega,p_2)=\deg(f,\Omega,p_1)$, if $p_2\neq p_1$, $p_2\notin f(N_f)$ and $||p_2-p||<\frac{\varsigma}{7}$.}}
%\end{itemize}
%\end{definition}

Let $\xi\in G$ be arbitrarily given. The Taylor expansion of Hamiltonian (\ref{eq1}) about $\xi$ {in a small neighborhood of $\xi$ in $G$} reads
\begin{align*}
H(x,y,z,\xi)=e(\xi)+\langle\omega(\xi),y-\xi\rangle+\tilde{h}(y-\xi)+g(z)+\varepsilon P(x,y,z),
\end{align*}
where $z=(u,v)\in \mathbb{R}^{2d}$,  $e(\xi)=h(\xi)$, $\omega(\xi)=\frac{\partial h(\xi)}{\partial y}$, and $\tilde{h}(y-\xi)=O(|y-\xi|^2)$. Using the
transformation $(y-\xi)\rightarrow y$ in the above, we have
\begin{align*}
H(x,y,z,\xi)=e(\xi)+\langle\omega(\xi),y\rangle+\tilde{h}(y,\xi)+g(z)+\varepsilon P(x,y,z,\xi),
\end{align*}
where $(x,y,z)\in D(s,r)$, $\xi\in G$.

We are finally led to consider the following Hamiltonian
\begin{equation}\label{Hxi}
\left\{
\begin{array}{ll}
H:\mathbb{T}^n\times G\times \mathbb{R}^{2d}\times G\rightarrow \mathbb{R}^1,\\
H(x,y,z,\xi)=e(\xi)+\langle\omega(\xi),y\rangle+\tilde{h}(y,\xi)+g(z)+\varepsilon P(x,y,z,\xi).
\end{array}
\right.
\end{equation}
First, we make the following assumptions:
\begin{itemize}
\item[(A0)] For fixed $\zeta_0=0\in \mathcal{O}^o\subset \mathbb{R}^{2d}$, where $\mathcal{O}$ is a bounded closed domain, there are $\sigma>0$, $L\geq2$ such that
\begin{align*}
&\deg\left(\nabla{g(z)}-\nabla{g(\zeta_0)}, \mathcal{O}^o, 0\right)\neq0,\\
&|\nabla{g(z)}-\nabla{g(z_*)}|\geq\sigma|z-z_*|^L,~z,z_*\in \mathcal{O},
\end{align*}
where $\nabla g(z)=(\partial_{z_1}g(z),\partial_{z_2}g(z),\cdots,\partial_{z_{2d}}g(z))$.
\item[(A1)] There is a positive integer $M$ such that
$$rank\left\{\partial_\xi^iA_0(\xi):0\leq|i|\leq M\right\}=1,  ~~~for ~all~ \xi\in G,$$\\
where $$A_0(\xi)=\left\langle\frac{k}{|k|},\omega(\xi)\right\rangle,~k\in\mathbb Z^n\backslash\{0\},~|k|=|k_1|+\cdots+|k_n|.$$
\end{itemize}

We now state the main results:
\begin{theorem}\label{th1}
Consider Hamiltonian (\ref{Hxi}).
Assume that $({A0})$-$({A1})$ hold and $\zeta_0=0$ is a equilibrium point of $g(z)$, i.e.,
\begin{align}\label{eq62}
g(z)=o(|z|^2),~~~\nabla g(\zeta_0)=0.
\end{align}
Let $\varepsilon_0>0$ be sufficiently small.
 %such that $$\varepsilon_0^{\frac{m}{8(m+1)}}\frac{|\partial_\xi^\ell P|_{D(s,r)}}{s^m}\leq 1$$
%with \begin{align}\label{m}
%{{m\geq \frac{L+\sqrt{L^2+8L+4}}{2}.}}
%\end{align}
Then as $0<\varepsilon<\varepsilon_0$, there exist Cantor sets {$G_*\subset G$} with {$|G\backslash G_*|\rightarrow0$} as $\varepsilon\rightarrow0$ as well as a ${C}^{m-1}$ Whitney smooth family of symplectic transformations
$$\Psi^*=\Phi_1\circ\Phi_2\circ\cdots\circ\Phi_\infty:D\left(\frac{s}{2},\frac{r}{2}\right)\rightarrow D(s,r),~~~\xi\in G_*,$$
which is analytic in $x$, ${C}^{m-1}$ Whitney smooth in $y$, $z$ and $\xi$, and close to the identity, such that
\begin{align}\label{H*}
H_*=H\circ\Psi^*=e_*(\xi)+\langle\omega_*(\xi),y\rangle+\tilde{h}_*(y,\xi)+g_*(z,\xi)+{\bar{g}_*(y, z,\xi)}+P_*(x,y,z,\xi)
\end{align}
with
\begin{align*}
&\tilde h_*(y,\xi)=O(|y|^2),\\
&g_*(z,\xi)=g(z)+\varepsilon^{\frac{(m-2)(5m+4)}{8m(m+1)}} O(|z|^2),\\
&\bar g_*(y,z,\xi)=\sum_{2|\imath|+|\jmath|\leq m,1\leq |\imath|, |\jmath| }\bar g_{\imath\jmath}(\xi)y^\imath z^\jmath,\\
&\partial_y^i\partial_z^jP_*\big|_{(y,z)=(0,0)}=0,
\end{align*}
where $e_*, \omega_*, \tilde h_*, g_*, \bar g_*, P_*$ are real analytic in their associated variables, $2|i|+|j|\leq m$, and \begin{align}\label{m}
{{m\geq \frac{L+\sqrt{L^2+16L+16}}{4},~~ L~\mbox{is defind as in }(A0).}}
\end{align}
Thus for each $0<\varepsilon<\varepsilon_0$, Hamiltonian system (\ref{Hxi}) admits a Whitney smooth family of real analytic, quasi-periodic $n$-tori $\mathbb{T}_\xi^\varepsilon$, $\xi\in G_*$.
\end{theorem}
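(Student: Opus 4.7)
The plan is to construct $\Psi^{*}=\Phi_{1}\circ\Phi_{2}\circ\cdots$ by a quasi-linear KAM iteration. I pick sequences $r_{\nu}\searrow r/2$, $s_{\nu}\searrow s/2$, and $\varepsilon_{\nu+1}\sim\varepsilon_{\nu}^{4/3}$, and maintain at the $\nu$-th step a Hamiltonian
\[
H_{\nu}=e_{\nu}(\xi)+\langle\omega_{\nu}(\xi),y\rangle+\tilde h_{\nu}(y,\xi)+g_{\nu}(z,\xi)+\bar g_{\nu}(y,z,\xi)+P_{\nu}(x,y,z,\xi)
\]
on $D(s_{\nu},r_{\nu})$ with $|P_{\nu}|_{D(s_{\nu},r_{\nu})}\le\varepsilon_{\nu}$. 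To produce $\Phi_{\nu+1}$ I first Fourier-truncate $P_{\nu}$ at some order $K_{\nu}\sim|\log\varepsilon_{\nu}|$ in $x$ and Taylor-truncate at order $m$ in $(y,z)$; the tails are then small enough to be absorbed into $P_{\nu+1}$. The remaining finite polynomial splits into (i) its $x$-average, which is shunted into the normal-form blocks $e_{\nu},\omega_{\nu},\tilde h_{\nu},g_{\nu},\bar g_{\nu}$; (ii) the $x$-oscillatory piece, eliminated via the time-one map $\phi_{F_{\nu}}^{1}$ of an auxiliary Hamiltonian $F_{\nu}$ solving a homological equation $\{\langle\omega_{\nu}(\xi),y\rangle,F_{\nu}\}+(\mathrm{osc})=0$; and (iii) a linear-in-$z$ part $\langle a_{\nu}(\xi),z\rangle$ which requires a separate device.

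The main obstacle is (iii). In the non-degenerate case one kills $\langle a_{\nu},z\rangle$ by a $\xi$-dependent translation $z\mapsto z+\zeta_{\nu}(\xi)$, with $\zeta_{\nu}$ determined by $\nabla g(\zeta_{\nu})=-a_{\nu}(\xi)$ via the implicit function theorem; here, however, $g=o(|z|^{2})$ forces $D\nabla g(0)=0$, and the IFT fails. Hypothesis (A0) is designed precisely for this obstruction. Consider the homotopy $\mathcal H(\zeta,t)=\nabla g(\zeta)-\nabla g(\zeta_{0})+t\,a_{\nu}(\xi)$, $t\in[0,1]$; the weak convexity $|\nabla g(z)-\nabla g(z_{*})|\ge\sigma|z-z_{*}|^{L}$ guarantees $\mathcal H(\cdot,t)\ne 0$ on $\partial\mathcal O^{o}$ for $|a_{\nu}(\xi)|$ small, hence homotopy invariance of topological degree together with $\deg(\nabla g-\nabla g(\zeta_{0}),\mathcal O^{o},0)\ne 0$ yields a zero $\zeta_{\nu}(\xi)\in\mathcal O^{o}$. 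The same weak-convexity bound produces the quantitative estimate $|\zeta_{\nu}(\xi)|\le(\|a_{\nu}\|/\sigma)^{1/L}\lesssim\varepsilon_{\nu}^{1/L}$. Because $du\wedge dv$ is invariant under constant translations in $z$, this shift is automatically symplectic, so composing it with $\phi_{F_{\nu}}^{1}$ gives $\Phi_{\nu+1}$. The size $|\zeta_{\nu}|\lesssim\varepsilon_{\nu}^{1/L}$ is what forces the quantitative relation (\ref{m}) between $m$ and $L$: the Taylor-truncation order $m$ must be large enough that the new contributions of order $|\zeta_{\nu}|^{m+1}\sim\varepsilon_{\nu}^{(m+1)/L}$ generated when the shift is composed with the $m$-jet of $g_{\nu}$ and $\bar g_{\nu}$ can still be majorised by the super-exponential tolerance $\varepsilon_{\nu+1}$.

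Solving the homological equation for $F_{\nu}$ introduces small divisors $\langle k,\omega_{\nu}(\xi)\rangle$. Using (A1) and the R\"ussmann-type measure estimate $|\{\xi\in G:|\langle k/|k|,\omega_{\nu}(\xi)\rangle|<\gamma_{\nu}\}|\lesssim\gamma_{\nu}^{1/M}$, I excise at each stage a bad set of $\xi$ and set $G_{*}=\bigcap_{\nu}G_{\nu}$, which satisfies $|G\setminus G_{*}|\to 0$ as $\varepsilon\to 0$. Standard geometric-series bounds on $\Phi_{\nu}-id$, together with a Whitney extension in $\xi$, then show that $\Psi^{*}$ converges on $D(s/2,r/2)$ to a close-to-identity symplectic map which is analytic in $x$ and $C^{m-1}$ Whitney smooth in $(y,z,\xi)$. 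The limiting Hamiltonian $H_{*}=H\circ\Psi^{*}$ has exactly the structure (\ref{H*}) with $\partial_{y}^{i}\partial_{z}^{j}P_{*}|_{(y,z)=0}=0$ for $2|i|+|j|\le m$, which is precisely the statement that $\mathbb T^{n}\times\{0\}\times\{0\}$ is an invariant quasi-periodic $n$-torus $\mathbb T_{\xi}^{\varepsilon}$ for each $\xi\in G_{*}$. The most delicate accounting, as anticipated above, is the interplay among $L$, $m$, and the iteration speed encoded in (\ref{m}); the exponent $\tfrac{(m-2)(5m+4)}{8m(m+1)}$ appearing in the bound for $g_{*}-g$ is a direct bookkeeping consequence of that balance.
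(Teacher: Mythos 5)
Your outline captures the two structural ideas that make the paper work: the degree-theoretic translation in $z$ based on (A0) to kill the $O(|z|)$ terms, and the resulting bound $|\zeta_\nu|\lesssim(\text{size})^{1/L}$ that forces the relation~(\ref{m}) between $m$ and $L$. That part of the proposal matches Lemma~\ref{le3} and the verification of \textsc{(H4)} in the paper quite closely.

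There is, however, a genuine gap in your treatment of the homological equation, and it propagates into the measure estimate. You announce a quasi-linear iteration but then write the \emph{linear} equation $\{\langle\omega_\nu(\xi),y\rangle,F_\nu\}+(\mathrm{osc})=0$, i.e.\ you only Poisson-bracket $F_\nu$ against the toral rotation. The paper instead solves $\{N,F\}+R-[R]-Q=0$ with the \emph{full} normal form $N=e+\langle\omega,y\rangle+\tilde h+g+\bar g$. This is not a cosmetic difference. Although $g=o(|z|^2)$ initially, the averaging step feeds an $O(|z|^2)$ correction into $g_\nu$ at every stage: $g_\nu(z)=g(z)+\sum_{i<\nu}\gamma_i^{(m+1)(2d)^m}s_i^{m-2}\mu_i\,O(|z|^2)$. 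Consequently $\partial_z g_\nu$ contains a genuine linear-in-$z$ part, and the term $\partial_z g_\nu\,J\,\partial_z F$ that you implicitly relegate to $P_{\nu+1}$ has size comparable to $R$ itself — it is \emph{not} super-exponentially small and cannot be absorbed into the next error. The only way to control it is to move the $z$-linearisation of $\partial_z g_\nu$ (and analogously $\partial_z\bar g_\nu$, $\nabla\tilde h_\nu$) into the coefficient matrix of the homological equation, which is precisely what produces the matrices $A_{\imath\jmath}=\sqrt{-1}\langle k/|k|,\omega\rangle I_{(2d)^{\imath+\jmath}}+\tilde S_{\imath\jmath}/|k|$ of~(\ref{A}) and the quasi-linear system~(\ref{eq5}).

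Once that is done, the non-resonance set $G_{\nu+1}$ must carry, in addition to $|\langle k,\omega_\nu\rangle|>\gamma_\nu/|k|^\tau$, the second Melnikov-type condition $\overline{A_{\imath\jmath}^\top}A_{\imath\jmath}>\gamma_\nu^2/|k|^{2\tau}I_{(2d)^{\imath+\jmath}}$, and its excised measure has to be estimated. Your proposal invokes only the scalar R\"ussmann bound from (A1), which suffices at $\nu=0$ (since $\partial_z^2 g(0)=0$ makes $\tilde S^0_{\imath\jmath}\equiv0$), but not at later steps where $\tilde S^\nu_{\imath\jmath}\neq0$; the paper handles this in Lemma~\ref{le10} via a Taylor expansion of $A_{\imath\jmath}(\xi)$, a rank argument coming from (A1), and a Poincar\'e-separation comparison of quadratic forms. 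Without the quasi-linear equation your argument breaks at the error estimate for $P_{\nu+1}$, and without the matrix non-resonance condition the surviving parameter set $G_*$ is not the one for which the constructed $F_\nu$ actually exist. You should replace the linear homological equation with the quasi-linear one, carry the matrices $A_{\imath\jmath}$ into the definition of $G_{\nu+1}$, and supply the corresponding measure estimate.
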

%\begin{remark}
%{In Theorem \ref{th1}, one can allow $d>1$. Hence it seems to be the first result about lower-dimensional invariant tori in the normal degeneracy of high co-dimension.}
%\end{remark}
\begin{remark} Recall $z=(u,v)\in\mathbb R^d\times\mathbb R^d$. For partially degenerate Hamiltonian (\ref{eq1}) with
$$g(u,v)=\frac{1}{2}\sum_{i=1}^{d}\lambda_i v_i^2+f(u),~~\lambda_i\neq0,i=1,\cdots,d,d\geq1,~f(u)=O(|u|^4),$$
we will study it in a forthcoming paper.
\end{remark}
\begin{remark}
(\ref{H*}) in Theorem \ref{th1} indicates that Hamiltonian system (\ref{Hxi}) is conjugated to a nonlinear system, not a linear one. Moreover, in order to ensure the persistence of low-dimensional invariant tori, the conjugated system cannot have first order terms about $z$, i.e., $g_*(z)=O(|z|^2)$, otherwise there is no invariant torus because one cannot get  invariant set in the $z$ direction as $\nabla g_*(0)\neq0$.
\end{remark}
Indeed, as an application of Theorem \ref{th1}, we have the following proposition:
\begin{proposition}\label{pro1}
Consider Hamiltonian (\ref{Hxi}) with
\begin{align*}
g(u,v)=\frac{1}{2l_0}|u|^{2l_0}+\frac{1}{2k_0}|v|^{2k_0},
\end{align*}
where $l_0,k_0>1$ are integers, the dimension of $u$ and $v$ might be any positive integer. Assume that $\omega(\xi)$ satisfies condition {(A1)}.
Then the perturbed Hamiltonian system (\ref{eq1}) admits a family of lower-dimensional invariant tori.
\end{proposition}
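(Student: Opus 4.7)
My plan is to reduce Proposition~\ref{pro1} to a direct application of Theorem~\ref{th1} by verifying the structural hypothesis (A0) for the explicit choice $g(u,v) = \frac{1}{2l_0}|u|^{2l_0} + \frac{1}{2k_0}|v|^{2k_0}$. Condition (A1) is part of the standing hypotheses, and since $l_0,k_0 \geq 2$ one has immediately $g(z) = o(|z|^2)$ together with $\nabla g(0) = 0$, so the degeneracy requirement (\ref{eq62}) is satisfied at $\zeta_0 = 0$. The entire argument therefore reduces to checking the two clauses of (A0): the nonvanishing of a topological degree, and a H\"{o}lder-type lower bound on $\nabla g$.

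\textbf{The degree condition.} I would compute $\nabla g(z) = (|u|^{2l_0-2}u,\ |v|^{2k_0-2}v)$, which vanishes only at $z=0$. Take $\mathcal{O} = B_r(0)$ for a suitably small $r>0$ and consider the affine homotopy
\begin{equation*}
F_t(u,v) = \bigl(((1-t)|u|^{2l_0-2} + t)\,u,\ ((1-t)|v|^{2k_0-2} + t)\,v\bigr),\qquad t \in [0,1].
\end{equation*}
The scalar coefficients above are strictly positive whenever their respective vector arguments are nonzero, so $F_t(z)=0$ forces $z=0$; in particular $F_t$ never vanishes on $\partial\mathcal{O}$. Homotopy invariance then yields $\deg(\nabla g - \nabla g(0),\mathcal{O}^o,0) = \deg(id,\mathcal{O}^o,0) = 1 \neq 0$.

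\textbf{The H\"{o}lder bound.} The key analytic input is the classical $p$-Laplacian inequality: for $p\geq 2$ and $x,y \in \mathbb{R}^d$ one has $\bigl||x|^{p-2}x - |y|^{p-2}y\bigr| \geq c_p |x-y|^{p-1}$. Applying this to the $u$-block with $p=2l_0$ and to the $v$-block with $p=2k_0$ (both $\geq 4$) I would derive
\begin{equation*}
|\nabla g(z) - \nabla g(z_*)|^2 \geq c_1^2 |u-u_*|^{2(2l_0-1)} + c_2^2 |v-v_*|^{2(2k_0-1)}.
\end{equation*}
Then, setting $L = \max\{2l_0-1,\ 2k_0-1\} \geq 3$ and shrinking $r$ so that $|u-u_*|,|v-v_*| \leq 1$ on $\mathcal{O}$, a short case split on whether $|u-u_*| \geq |v-v_*|$ or the reverse, combined with $|z-z_*|^{2L} \leq 2^L \max\{|u-u_*|^{2L},\ |v-v_*|^{2L}\}$ and the monotonicity $t^{2(2l_0-1)} \geq t^{2L}$ for $t \in [0,1]$ (and symmetrically in $v$), yields $|\nabla g(z) - \nabla g(z_*)| \geq \sigma |z-z_*|^L$ for an appropriate $\sigma>0$. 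This completes the verification of (A0), after which Theorem~\ref{th1} directly supplies the desired Whitney smooth family of lower-dimensional invariant tori.

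\textbf{Main obstacle.} The only delicate step is reconciling the two distinct exponents $2l_0-1$ and $2k_0-1$ produced by the decoupled $u$- and $v$-blocks into a single H\"{o}lder exponent $L$ governing $|z-z_*|$. The case split above handles this cleanly once $\mathcal{O}$ is restricted to a small ball around the origin, so no genuinely new difficulty arises beyond invoking Theorem~\ref{th1}.
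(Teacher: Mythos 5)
Your proposal is correct, and it diverges from the paper's argument in two instructive ways. For the degree condition, you use the affine homotopy $F_t = ((1-t)\nabla g + t\,\mathrm{id})$, exploiting the strict positivity of the scalar coefficients $(1-t)|u|^{2l_0-2}+t$ and $(1-t)|v|^{2k_0-2}+t$ to show that $F_t$ never vanishes on $\partial\mathcal{O}$ and hence $\deg(\nabla g,\mathcal{O}^o,0)=\deg(\mathrm{id},\mathcal{O}^o,0)=1$. The paper instead observes that $\nabla g$ is an odd map and invokes Borsuk's theorem, which gives a nonzero (odd) degree; your homotopy is equally short and has the small added benefit of pinning the degree down to exactly $1$. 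For the weak-convexity lower bound, you give a complete argument via the classical $p$-Laplacian monotonicity inequality $\bigl||x|^{p-2}x-|y|^{p-2}y\bigr|\geq c_p|x-y|^{p-1}$ applied block-by-block, followed by the case split reconciling the two exponents $2l_0-1$ and $2k_0-1$ into $L=2\max\{l_0,k_0\}-1$. The paper's treatment of this step is considerably terser: it simply asserts the existence of $\sigma$ with a one-line formula that appears to be a one-dimensional heuristic (it uses an undefined exponent $\ell$ and treats $z$ as a scalar), whereas your derivation supplies the missing justification and arrives at the same $L$. Both routes yield verification of (A0) at $\zeta_0=0$, after which the conclusion follows from Theorem~\ref{th1} exactly as you say. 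In short, your proof is sound, matches the paper's $L$, and is arguably more rigorous on the H\"{o}lder estimate.
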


See Appendix A for the complete proof.

Next, we will give an example to state that our assumption {(A0)} is sharp to Melnikov's persistence. See below for  a counter example:
\begin{proposition}\label{pro2}
Consider Hamiltonian (\ref{eq1}) with
\begin{align}\label{HH}
H=\omega\cdot y+\frac{1}{3}u^3+\frac{1}{3}v^3+\varepsilon^2 u,
\end{align}
where $y\in\mathbb{R}^1$, $(u,v)\in\mathbb{R}^1\times\mathbb{R}^1$, and $\varepsilon^2 u$ is a perturbation term. Assume $\omega$ satisfies Diophantine condition (\ref{dio}). Then the Hamiltonian system does not admit any lower-dimensional torus.
\end{proposition}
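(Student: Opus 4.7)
The plan is to argue by direct contradiction, bypassing any KAM machinery. First I would write Hamilton's equations induced by $H=\omega y+\frac{1}{3}u^3+\frac{1}{3}v^3+\varepsilon^2 u$:
\begin{align*}
\dot x=\omega,\qquad \dot y=0,\qquad \dot u=v^2,\qquad \dot v=-u^2-\varepsilon^2.
\end{align*}
The decisive observation is that the right-hand side of the $v$-equation satisfies $\dot v=-u^2-\varepsilon^2\leq-\varepsilon^2<0$ along every orbit of the perturbed system, uniformly in phase space. This is precisely the effect of the perturbation $\varepsilon^2 u$: it shifts the equilibrium of the normal dynamics off the real locus, since $u^2+\varepsilon^2=0$ has no real solution for $\varepsilon\neq 0$.

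Integrating the inequality yields $v(t)\leq v(0)-\varepsilon^2 t\to-\infty$ as $t\to+\infty$, so every trajectory is unbounded in the $v$-component. Since any lower-dimensional invariant torus is by definition a compact invariant subset of phase space, on which $v$ must be bounded, no such torus can exist, and Proposition \ref{pro2} follows. For the reader who prefers the argument phrased directly on a candidate torus, I would note that because $\dot x=\omega\neq 0$ and $\dot y=0$ are uncoupled from $(u,v)$, any invariant torus with frequency $\omega$ is necessarily the graph $\{(x,y_0,u(x),v(x)):x\in\mathbb T^1\}$ of $2\pi$-periodic real-analytic functions $u,v$, with $y_0$ constant. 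Flow-invariance then forces $\omega\,v'(x)=-u(x)^2-\varepsilon^2$, and integrating over $\mathbb T^1$ produces
\begin{align*}
0=\int_{\mathbb T^1}\omega\,v'(x)\,dx=-\int_{\mathbb T^1}u(x)^2\,dx-2\pi\varepsilon^2<0,
\end{align*}
the same contradiction.

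There is essentially no technical obstacle here; the content is conceptual and is exactly the sharpness claim advertised in Section \ref{sec:2}. The term $\varepsilon^2 u$ shifts $\nabla g$ on $\mathcal O$ so that $0$ is no longer in its image, thereby destroying the topological-degree portion of $(A0)$, and the argument above shows that this single violation is already fatal to Melnikov's persistence. In this sense assumption $(A0)$ cannot be weakened by simply dropping the degree hypothesis.
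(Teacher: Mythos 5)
Your proposal is correct and follows essentially the same route as the paper: both write down the equations of motion, observe that $\dot v=-u^2-\varepsilon^2<0$ has no zero (equivalently $u^2+\varepsilon^2=0$ has no real solution), and conclude that no compact invariant torus can exist; your integration of $\dot v\leq-\varepsilon^2$ merely spells out the boundedness contradiction that the paper leaves implicit. The additional averaging argument and the remark on the failure of the degree condition in (A0) are consistent with the paper's closing computation $\deg(\nabla g,B_\delta(0),0)=0$ and add nothing that conflicts with it.
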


The proof can be found in Appendix B.

\section{KAM step}\label{sec:3}
In this section, we will show the detailed construction and estimates for one cycle of KAM steps, see \cite{dunon,duprm,han,li2,li,poschel}.

\subsection{Description of the 0-th KAM step.}\label{subsec:3}

%Let $\xi_0\in{G}$ be fixed as statement $\textbf{(A1)}$. The Taylor expansion of Hamiltonian (\ref{eq1}) about $\xi_0$ reads
%\begin{equation*}
%H(x,y,u,v,\xi_0)=h(\xi_0)+\langle\nabla h(\xi_0),y-\xi_0\rangle+\bar{h}(y-\xi_0)+g(u,v)+\varepsilon P(x,y,u,v),
%\end{equation*}
%where $\bar{h}(y-\xi_0)=O(|y-\xi_0|^2)$.
%Using the transformation $(y-\xi_0)\rightarrow y$ in the above, we have
%\begin{equation}\label{eq2}
%H(x,y,u,v,\xi_0)=h(\xi_0)+\langle\nabla h(\xi_0),y\rangle+\bar{h}(y,\xi_0)+g(u,v)+\varepsilon P(x,y,u,v,\xi_0),
%\end{equation}
%where $(x,y,u,v)$ lies in a complex neighborhood $D(s,r)$, which is defined in (\ref{D}).

Recall the integer $m$ satisfying
\begin{align}\label{m}
{{m\geq \frac{L+\sqrt{L^2+16L+16}}{4},}}
\end{align}
where $L\geq2$ is defined as in {(A0)}.
Denote $\rho=\frac{1}{2(m+1)}$, and let $\eta>0$  be an integer such that $(1+\rho)^\eta>2$. We put
\begin{equation}\label{gamma}\gamma=\varepsilon^{\frac{1}{2m(m+1)(2d)^m}}.\end{equation}
Consider the perturbed Hamiltonian $(\ref{Hxi})$. First we define the following $0$-th KAM step parameters:
\begin{align}
&r_0=r,~~~\gamma_0=\gamma,~~~\mu_0=\varepsilon^{\frac{1}{8(m+1)}},~~~s_0=s\varepsilon^{\frac{1}{8(m+1)}}\gamma_0^{(m+1)(2d)^m},~~\sigma_0=\sigma,~~\beta_0=s,\notag\\\label{h0y}
&e_0=e(\xi),~~~~\omega_0=\omega(\xi),~~~~\tilde{h}_0(y)=\tilde{h}(y),~~~~g_0(z)=g(z),~~~~\bar{g}_0=0,~~~G_0={G},\\
%&e_0=h(\xi_0),~~~\nabla h_0(\xi_0)=\nabla h(\xi_0),~~~\bar h_0(y)=\bar h(y,\xi_0),~~~g_0(u,v)=g(u,v),~~~\bar{g}_0=0,\\
&D(s_0,r_0):=\left\{(x,y,z):|\textrm{Im}x|<r_0,|y|<s_0^2,|z|<s_0\right\},\notag
\end{align}
where $0<r_0,s_0,s,\gamma_0\leq 1$, $\sigma$ is defined in ${(A0)}$.
%$M^*$ is a constant (the concrete definition can be found in Lemma \ref{le2}),
%%\begin{align*}
%%K_1=([\log\frac{1}{s_0}]+1)^{3\eta}.
%\end{align*}
%$\eta$ is a fixed positive integer such that $(1+\rho)^\eta>2$ for $\rho=\frac{1}{6}$.%, and $B_0$ is non-vanishing by $\textbf{A}_2)$.
Therefore, we have that
\begin{align*}
H_0&=: H(x,y,z,\xi)=N_0+P_0,\\
N_0&=: e_0+\langle\omega_0,y\rangle+\tilde{h}_0(y)+g_0(z)+\bar{g}_0,\\
P_0&=: \varepsilon P(x,y,z,\xi).
\end{align*}
We first prove an estimate.
\begin{lemma}
Assume that \begin{equation*}
\textsc{(H0)}: \varepsilon_0^{\frac{m}{8(m+1)}}\frac{\left|\partial_\xi^\ell P\right|_{D(s_0,r_0)}}{s^m}\leq 1,~~|\ell|\leq m.
\end{equation*}
For $|\ell|\leq m$,
\begin{equation}\label{P0}
\left|\partial_\xi^\ell P_0\right|_{D(s_0,r_0)}\leq\gamma_0^{(m+1)(2d)^m}s_0^m\mu_0.
\end{equation}
\end{lemma}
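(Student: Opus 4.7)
The plan is to reduce the estimate to a pure bookkeeping calculation involving the KAM-step parameters, which are all elementary powers of $\varepsilon$.

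First, I would unpack the hypothesis \textsc{(H0)}: it gives
$$\left|\partial_\xi^\ell P\right|_{D(s_0,r_0)} \leq s^m\,\varepsilon_0^{-m/(8(m+1))}$$
for every $|\ell|\leq m$. Since $P_0=\varepsilon P$ and $\varepsilon<\varepsilon_0$, multiplication by $\varepsilon$ and the monotonicity $\varepsilon^a\leq\varepsilon_0^a$ (for $a>0$) yield
$$\left|\partial_\xi^\ell P_0\right|_{D(s_0,r_0)} \leq \varepsilon\cdot s^m\,\varepsilon_0^{-m/(8(m+1))}.$$
So everything reduces to checking that the right-hand side is dominated by $\gamma_0^{(m+1)(2d)^m}s_0^m\mu_0$.

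Next I would expand the target upper bound using the definitions in (\ref{h0y}). From $s_0=s\mu_0\gamma_0^{(m+1)(2d)^m}$, I get $s_0^m=s^m\mu_0^m\gamma_0^{m(m+1)(2d)^m}$, so
$$\gamma_0^{(m+1)(2d)^m}s_0^m\mu_0 \;=\; s^m\,\mu_0^{m+1}\,\gamma_0^{(m+1)^2(2d)^m}.$$
With $\mu_0=\varepsilon^{1/(8(m+1))}$ one has $\mu_0^{m+1}=\varepsilon^{1/8}$, and with $\gamma_0=\varepsilon^{1/(2m(m+1)(2d)^m)}$ from (\ref{gamma}) one gets $\gamma_0^{(m+1)^2(2d)^m}=\varepsilon^{(m+1)/(2m)}$. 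Therefore the desired inequality becomes
$$\varepsilon\cdot\varepsilon_0^{-m/(8(m+1))} \;\leq\; \varepsilon^{1/8 + (m+1)/(2m)},$$
or equivalently $\varepsilon^{\,7/8-(m+1)/(2m)} \leq \varepsilon_0^{m/(8(m+1))}$.

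Finally, I would verify the exponent inequality. For $m\geq 2$, a direct comparison shows $7/8-(m+1)/(2m)\geq m/(8(m+1))$ (check $m=2$ gives $1/8\geq 1/12$, and the left-hand side is increasing in $m$ while the right-hand side is bounded by $1/8$). Since $\varepsilon\leq\varepsilon_0<1$, both monotonicities combine to give $\varepsilon^{7/8-(m+1)/(2m)}\leq\varepsilon_0^{7/8-(m+1)/(2m)}\leq\varepsilon_0^{m/(8(m+1))}$, which closes the argument. The only mild obstacle is keeping the exponent arithmetic clean; otherwise the proof is a one-line substitution once the definitions are unwound, so no genuine analytic difficulty is expected here.
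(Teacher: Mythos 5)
Your proof is correct and takes essentially the same route as the paper: unwind the definitions of $s_0$, $\mu_0$, $\gamma_0$ to express the target bound as $s^m$ times a power of $\varepsilon$, then absorb the factor $\varepsilon$ from $P_0=\varepsilon P$ against the hypothesis \textsc{(H0)} via an exponent comparison that holds for $m\geq 2$. The only cosmetic difference is that you keep $\varepsilon_0$ and $\varepsilon$ as separate quantities until the end, whereas the paper replaces $\varepsilon_0$ by $\varepsilon$ at the outset using $0<\varepsilon<\varepsilon_0<1$; both yield the same exponent bookkeeping.
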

\begin{proof}
Using the fact $\gamma_0=\varepsilon^{\frac{1}{2m(m+1)(2d)^m}}$, $s_0=s\varepsilon^{\frac{1}{8(m+1)}}\gamma_0^{(m+1)(2d)^m}$ and $\mu_0=\varepsilon^{\frac{1}{8(m+1)}}$, we have
\begin{align*}
s_0^m=s^m\varepsilon^{\frac{m}{8(m+1)}}\gamma_0^{m(m+1)(2d)^m}=s^m\varepsilon^{\frac{m}{8(m+1)}+\frac{1}{2}}, %=s^m\varepsilon^{\frac{(m+1)}{(2d)^m}+\frac{m}{4(m+1)}},
\end{align*}
and
\begin{align}\label{u0}
\gamma_0^{(m+1)(2d)^m}s_0^m\mu_0&=\varepsilon^{\frac{1}{2m}}s^m\varepsilon^{\frac{m}{8(m+1)}+\frac{1}{2}}\varepsilon^{\frac{1}{8(m+1)}}%=s^m\varepsilon^{\frac{1}{2m}+\frac{m}{8(m+1)}+\frac{5}{8}}
\geq s^m\varepsilon^{\frac{1}{4}+\frac{5}{8}+\frac{1}{8(m+1)}}.
\end{align}
%Moreover, by \textbf{(H0)}, we obtain
%%let $\varepsilon_0>0$  be small enough so that
%%recall that $\varepsilon_0>0$ is small enough in Theorem \ref{th1} so that
%\begin{equation}\label{vare0}
%\varepsilon_0^{\frac{m}{8(m+1)}}\frac{|\partial_\xi^\ell P|_{D(s_0,r_0)}}{s^m}\leq 1,~~|\ell|\leq m,
%\end{equation}
Then, by {(H0)}, for any $0<\varepsilon<\varepsilon_0$, we get $$\varepsilon^{\frac{m}{8(m+1)}}\frac{\left|\partial_\xi^\ell P\right|_{D(s_0,r_0)}}{s^m}\leq 1,~~|\ell|\leq m, $$
i.e.,
\begin{align}\label{P}
\varepsilon^{\frac{m}{8(m+1)}}\left|\partial_\xi^\ell P\right|_{D(s_0,r_0)}\leq s^m,~~|\ell|\leq m.
\end{align}
%using the fact that $\mu_0=\varepsilon^{\frac{1}{2(2d)^m}}$,  we get
%\begin{align}\label{u0}
%\gamma_0^{n+m+2}s_0^m\mu_0&=\varepsilon^{\frac{n+m+2}{2m(2d)^m}}s^m\varepsilon^{\frac{(m+1)m}{2m(2d)^m}}\varepsilon^{\frac{1}{2(2d)^m}}\geq\varepsilon^{\frac{1}{2m}+\frac{1}{2}+\frac{1}{4}}\geq\varepsilon
%\end{align}
%and by (\ref{vare0}) and $0<\varepsilon<\varepsilon_0$,
%\begin{align*}
%\varepsilon^{\frac{1}{8}-\frac{1}{8\eta(\tau+1)(m+1)}}|||P|||_{D(s_0,r_0)}\frac{16^m(M^*+2)^m}{s^m}\leq 1,
%\end{align*}
%i.e.,
%\begin{align}\label{P}
%\varepsilon^{\frac{1}{8}}|||P|||_{D(s_0,r_0)}\leq \frac{s^m\varepsilon^{\frac{1}{8\eta(\tau+1)(m+1)}}}{16^m(M^*+2)^m}.
%\end{align}
Thus, by (\ref{u0}) and (\ref{P}), for $|\ell|\leq m, $
\begin{align*}
\left|\partial_\xi^\ell P_0\right|_{D(s_0,r_0)}&=\varepsilon\left|\partial_\xi^\ell P\right|_{D(s_0,r_0)}\leq \varepsilon^{\frac{1}{4}+\frac{5}{8}+\frac{1}{8(m+1)}}\varepsilon^{\frac{m}{8(m+1)}}\left|\partial_\xi^\ell P\right|_{D(s_0,r_0)}\\
&\leq s^m\varepsilon^{\frac{1}{4}+\frac{5}{8}+\frac{1}{8(m+1)}}\leq\gamma_0^{(m+1)(2d)^m}s_0^m\mu_0,
\end{align*}
which implies (\ref{P0}).

This completes the proof.
\end{proof}
\subsection{Induction from the $\nu$-th KAM step}

\subsubsection{Description of the $\nu$-th KAM step}
We first define the $\nu$-th KAM step parameters:
$$r_\nu=\frac{r_{\nu-1}}{2}+\frac{r_0}{4},~~~s_\nu=\frac{1}{8}s_{\nu-1}^{2\rho+1} ,~~~\mu_\nu=8^ms_{\nu-1}^\rho\mu_{\nu-1}.$$

Now, suppose that at $\nu$-th step, we have arrived at the following real analytic Hamiltonian:
\begin{equation}\label{eq2}
\begin{aligned}
H_\nu&=N_\nu+P_\nu
\end{aligned}
\end{equation}
with
\begin{align}\label{Nnu}
N_\nu&=e_\nu(\xi)+\langle\omega_\nu(\xi),y\rangle+\tilde h_\nu(y,\xi)+g_\nu(z,\xi)+\bar{g}_\nu(y,z,\xi),\\
\tilde h_\nu(y,\xi)&=\sum_{2\leq|i|}\tilde h_{i0}(\xi)y^i,\notag\\
g_\nu(z,\xi)&=g(z)+\sum_{i=0}^{\nu-1}\gamma_i^{(m+1)(2d)^m}s_i^{m-2}\mu_iO(|z|^2),\notag\\\label{bgnu}
\bar g_\nu(y,z,\xi)&=\sum_{2|i|+|j|\leq m,1\leq |i|, |j| }\bar g_{ij}(\xi)y^iz^j,
\end{align}
%$$\tilde h(y)=\sum_{2\leq|i|}\tilde h_{0i0}y^i$$
%$$g(z)=\sum_{2\leq|j|}g_{00j}z^j,$$
%$$\bar g(y,z)=\sum_{2|i|+|j|\leq m,1\leq |i|, |j| }\bar g_{0ij}y^iz^j,$$
defined on $D(s_\nu,r_\nu)$,  %and smoothly on $\xi\in G_\nu$, where $G_\nu\subseteq G_0$,
%\begin{equation*}
%\bar{h}=\frac{1}{2}\langle y,A(\xi_0)y\rangle+\hat{h},
%\end{equation*}
%$\hat{h}=\hat{h}(y,\xi_0)=O(|y|^3)$,
$$\nabla g_\nu(0)=\nabla g_{\nu-1}(\zeta_\nu-\zeta_{\nu-1})+\nabla_z[R_{\nu-1}](0,\zeta_\nu-\zeta_{\nu-1})=0,~~~~~\zeta_\nu\in B_{ (s_{\nu-1}^{m-2}\mu_{\nu-2})^\frac{1}{L}}(\zeta_{\nu-1}),\notag\\\label{degg}
$$
and
\begin{equation}\label{Pnu}
\left|\partial_\xi^\ell P_\nu\right|_{D(s_\nu,r_\nu)}\leq\gamma_\nu^{(m+1)(2d)^m}s_\nu^{m}\mu_\nu.
\end{equation}
%The equation of motion associated to $H_{\nu}$ is
%\begin{equation*}\label{xyuv}
%\left\{
%\begin{array}{ll}
%\dot{x}_\nu=\partial_{y_\nu} H_\nu,\\
%\dot{y}_\nu=-\partial_{x_\nu} H_\nu,\\
%\dot{u}_\nu=\partial_{v_\nu} H_\nu,\\
%\dot{v}_\nu=-\partial_{u_\nu} H_\nu.
%\end{array}
%\right.
%\end{equation*}

For simplicity, {we will omit the index for all quantities of the present KAM step (at $\nu$-th step), use $+$ to index all quantities (Hamiltonians, domains, normal forms, perturbations, transformations, etc.) in the next KAM step (at ($\nu$+1)-th step), and use $-$ to index all quantities in the previous KAM step (at ($\nu$-1)-th step).}
To simplify notations, we will not specify the dependence of $P$, $P_+$ etc. All the constants $c_1$-$c_6$ (they will be defined in different lemmas-Lemma \ref{le1}, \ref{le2}, \ref{le5}, \ref{le4}, \ref{le7}, \ref{le8}) are positive and independent of the iteration process, and we will also use $c$ to denote any intermediate positive constant which is independent of the iteration process.
Define
\begin{align*}
r_+&=\frac{r}{2}+\frac{r_0}{4},\\
\beta_+&=\frac{\beta}{2}+\frac{\beta_0}{4},\\
\sigma_+&=\frac{\sigma}{2}+\frac{\sigma_0}{4},\\
\gamma_+&=\frac{\gamma}{2}+\frac{\gamma_0}{4},\\
s_+&=\frac{1}{8}\alpha s,~~~~~\alpha=s^{2\rho}=s^{\frac{1}{(m+1)}},\\
\mu_+&=8^mc_0\mu s^{\rho},~~~~~c_0=\max\{1,c_1,c_2,\cdots,c_6\},\\
%\beta_+&=\frac{\beta}{2}+\frac{\beta_0}{4},\\
%\gamma_+&=\frac{\gamma}{2}+\frac{\gamma_0}{4},\\
%\delta_+&=\frac{\delta}{2},\\
K_+&=\left(\left[\log\frac{1}{s}\right]+1\right)^{3\eta},~~~~~(1+\rho)^\eta>2,\\
D(s)&=\{(y,z)\in \mathbb{C}^n\times \mathbb{R}^{2d}:|y|<s^2,|z|<s\},\\
\hat D(s)&=D\left(s,r_++\frac{7}{8}(r-r_+)\right),\\
%\hat{D}&=D(\frac{s}{2},r_++\frac{6}{8}(r-r_+)),\\
\tilde{D}&=D\left(\beta_+,r_++\frac{5}{8}(r-r_+)\right),\\
D_{\frac{i}{8}\alpha}&=D\left(\frac{i}{8}\alpha s,r_++\frac{i-1}{8}(r-r_+)\right), ~~i=1,2,\cdots,8,\\
D_+&=D_{\frac{1}{8}\alpha}=D(s_+,r_+),\\
%\color{red}G_+&=\{\xi:|\langle k,\nabla h(\xi)\rangle|>\frac{\gamma_0}{|k|^\tau},~0<k\leq K_+\},\\
%B_+&=B_{\mu}(\xi)\bigcap G_+,\\
%\Omega(\xi_0)&=\frac{1}{\varepsilon}\omega(\xi_0),\\
%B_+&=\{\xi_0\in G:|\langle k,\omega(\xi_0)\rangle|>\frac{\gamma}{|k|^\tau},for ~all~ 0<|k|\leq K_+\},\\
\Gamma(r-r_+)&=\sum_{0<|k|\leq K_+}|k|^{(m+1)(2d)^m\tau+m}e^{-|k|\frac{r-r_+}{8}},\\
%{\color{blue}G_{+}}&=\{\xi\in G: |\langle k,\omega\rangle|>\frac{\gamma}{|k|^\tau},|\det{A_{\imath\jmath}^\top}| >\frac{\gamma^{(2d)^{\imath+\jmath}}}{|k|^{\tau(2d)^{\imath+\jmath}}}, for~ k\in\mathbb{Z}^n,\\
%&~~~~\quad0<|k|<K_+, 2|\imath|+|\jmath|\leq m, 1\leq|\jmath|\}\\
G_{+}&=\Bigg\{\xi\in G: |\langle k,\omega\rangle|>\frac{\gamma}{|k|^\tau},\overline{A_{\imath\jmath}^\top} A_{\imath\jmath}>\frac{\gamma^2}{|k|^{2\tau}}I_{(2d)^{\imath+\jmath}},  for~ k\in\mathbb{Z}^n,\\
&~~~~\quad0<|k|<K_+, 2|\imath|+|\jmath|\leq m, 1\leq|\jmath|\Bigg\}.
\end{align*}

\subsubsection{Truncation}
Consider the Taylor-Fourier series of $P$:
\begin{equation*}
P=\sum_{k\in \mathbb{Z}^n,~\imath,\jmath\in \mathbb{Z}_+^n}p_{k\imath\jmath}y^{\imath}z^\jmath e^{\sqrt{-1}\langle k,x\rangle},
\end{equation*}
and let $R$ be the truncation of $P$ of the form
\begin{align*}
R&=\sum_{|k|\leq K_+,~2|\imath|+|\jmath|\leq m}p_{k\imath\jmath}y^{\imath}z^\jmath e^{\sqrt{-1}\langle k,x\rangle}\\
&=\sum_{|k|\leq K_+,~2|\imath|+|\jmath|\leq m}p_{k\imath\jmath}y_1^{\imath_1}\cdots y_n^{\imath_n}z_1^{\jmath_1}\cdots z_{2d}^{\jmath_{2d}}e^{\sqrt{-1}\langle k,x\rangle},
%&=\sum_{|k|\leq K_+}(p_{k00}+\langle p_{k10},y\rangle+\langle p_{k01},z\rangle+\langle z,p_{k02}z\rangle) e^{\sqrt{-1}\langle k,x\rangle}.
\end{align*}
where $|\imath|=|\imath_1|+\cdots+|\imath_n|$, $ |\jmath|=|\jmath_1|+\cdots+|\jmath_{2d}|$.

Next, we will prove that the residual term $P-R$ is much smaller than perturbation term $P$ by truncating appropriately, see the lemma below.
\begin{lemma}\label{le1}
Assume that
$${\textsc{(H1)}}:\int_{K_+}^{\infty}t^{n}e^{-t\frac{r-r_+}{16}}dt\leq s.$$
Then there is a constant $c_1$ such that for all $\xi\in G$, $|\ell|\leq m$,
\begin{align}\label{P-R}
\left|\partial_\xi^\ell (P-R)\right|_{D_\alpha}&\leq c_1\gamma^{(m+1)(2d)^m}s^{m+1}\mu,\\\label{R}
\left|\partial_\xi^\ell R\right|_{D_\alpha}&\leq c_1\gamma^{(m+1)(2d)^m}s^{m}\mu.
\end{align}
\end{lemma}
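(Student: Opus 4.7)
The plan is to prove both inequalities via the standard two-step truncation argument: first bound the Fourier tail $P-P^{K_+}$, then bound the Taylor tail $P^{K_+}-R$, where $P^{K_+} = \sum_{|k|\le K_+}\hat P_k(y,z)e^{\sqrt{-1}\langle k,x\rangle}$ is the Fourier truncation. The only input needed is the inductive bound $|\partial_\xi^\ell P|_{D(s,r)}\le\gamma^{(m+1)(2d)^m}s^m\mu$ for $|\ell|\le m$ (this is (\ref{Pnu}) at the current step), together with assumption (H1) and the choice $\alpha=s^{2\rho}=s^{1/(m+1)}$.

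First I would fix $|\ell|\le m$ and work with $\partial_\xi^\ell P$, whose bound on $D(s,r)$ is the same as that of $P$. By standard analyticity arguments, the Fourier coefficients satisfy $|\hat{(\partial_\xi^\ell P)}_k(y,z)|\le |\partial_\xi^\ell P|_{D(s,r)}\,e^{-|k|r}$ on $\{|y|<s^2,\,|z|<s\}$, and Cauchy estimates in $(y,z)$ give the monomial bound $|p_{k\imath\jmath}|\le|\partial_\xi^\ell P|_{D(s,r)}\,e^{-|k|r}s^{-2|\imath|-|\jmath|}$. On $D_\alpha$ one has $|e^{\sqrt{-1}\langle k,x\rangle}|\le e^{|k|(r_++\frac{7}{8}(r-r_+))}$, so each Fourier mode with $|k|>K_+$ picks up a factor $e^{-|k|(r-r_+)/8}$. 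Summing over $|k|>K_+$ bounds the tail by $C\,|\partial_\xi^\ell P|_{D(s,r)}\int_{K_+}^{\infty}t^{n-1}e^{-t(r-r_+)/8}dt$, and (H1) (absorbing the harmless extra factor of $t$ and the $16$ vs.\ $8$ by monotonicity) gives a factor $\le s$. This yields
\[
|\partial_\xi^\ell(P-P^{K_+})|_{D_\alpha}\le C\,|\partial_\xi^\ell P|_{D(s,r)}\,s\le C\gamma^{(m+1)(2d)^m}s^{m+1}\mu.
\]

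Next I would estimate the Taylor tail. On $D_\alpha$ we have $|y^\imath z^\jmath|\le(\alpha s)^{2|\imath|+|\jmath|}$, so combining with the Cauchy bound on $p_{k\imath\jmath}$ the contribution of each monomial is at most $|\partial_\xi^\ell P|_{D(s,r)}\,\alpha^{2|\imath|+|\jmath|}e^{-|k|(r-r_+)/8}$. Summing over $|k|\le K_+$ and over the indices $2|\imath|+|\jmath|>m$ gives a factor bounded by $C\alpha^{m+1}=Cs$, using the geometric series in $\alpha<1$ and the fact that $\sum_{|k|\le K_+}e^{-|k|(r-r_+)/8}$ is bounded by a constant independent of $K_+$. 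Hence
\[
|\partial_\xi^\ell(P^{K_+}-R)|_{D_\alpha}\le C\gamma^{(m+1)(2d)^m}s^{m+1}\mu,
\]
and adding the two estimates produces (\ref{P-R}). Estimate (\ref{R}) then follows immediately from the triangle inequality $|R|\le|P|+|P-R|$ together with (\ref{Pnu}), absorbing the higher-order $s^{m+1}$ term into $s^m$.

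The argument is essentially bookkeeping and contains no conceptual obstacle; the only care point will be the interplay between the exact domain widths used in each truncation (passing from $D(s,r)$ down to $D_\alpha$ in the $x$-direction loses $\tfrac{1}{8}(r-r_+)$, which is exactly what feeds the Fourier tail through (H1)) and the choice $\alpha=s^{1/(m+1)}$ that converts the Taylor tail into the desired gain of one power of $s$. The constant $c_1$ will be extracted as an explicit combinatorial factor depending only on $n$, $d$ and $m$, hence independent of the iteration index, as required by the convention on $c_1,\dots,c_6$.
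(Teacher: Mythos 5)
Your decomposition of $P-R$ into a Fourier tail (modes $|k|>K_+$) plus a Taylor tail (retained modes, degree $>m$ in $(y,z)$) matches the paper's $P-R=I+II$, and your estimate of the Fourier tail via the exponential decay of coefficients and (H1) is correct and essentially identical to the paper's. The gap is in the Taylor tail. You bound each monomial $p_{k\imath\jmath}y^\imath z^\jmath e^{\sqrt{-1}\langle k,x\rangle}$ on $D_\alpha$ by $|\partial_\xi^\ell P|_{D(s,r)}\,\alpha^{2|\imath|+|\jmath|}e^{-|k|(r-r_+)/8}$ and then sum over all $|k|\le K_+$. The resulting factor
\begin{equation*}
\sum_{|k|\le K_+}e^{-|k|\frac{r-r_+}{8}}
\end{equation*}
is finite and independent of $K_+$, as you say, but it is \emph{not} bounded by a constant independent of the iteration step: it scales like $(r-r_+)^{-n}$, and here $r_\nu-r_{\nu+1}=r_0/2^{\nu+2}\to0$, so this factor grows like $2^{n\nu}$. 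The constant you extract therefore cannot serve as one of the iteration-independent $c_1,\dots,c_6$ the scheme requires, and (\ref{P-R}) with a fixed $c_1$ does not follow from your computation. The root cause is double-counting: the Cauchy bound $|p_{k\imath\jmath}|\le|\partial_\xi^\ell P|_{D(s,r)}\,e^{-|k|r}s^{-2|\imath|-|\jmath|}$ already encodes the full $x$-analyticity strip, so paying a further $e^{-|k|(r-r_+)/8}$ per mode and re-summing over $k$ costs you the $(r-r_+)^{-n}$ penalty.

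The paper avoids the $k$-sum altogether at this stage. Having already bounded the Fourier tail $I$, it uses the triangle inequality to get $|\partial_\xi^\ell(P-I)|_{\hat D(s)}\le2\gamma^{(m+1)(2d)^m}s^m\mu$, and then estimates the degree-$>m$ Taylor remainder of the \emph{single function} $P-I$: writing $II$ as an antiderivative of $\partial^{(p,q)}_{y,z}(P-I)$ with $2|p|+|q|=m+1$, the Cauchy estimate in $(y,z)$ gives the factor $c\,s^{-(m+1)}$, and integrating over the shrunken domain supplies $(\alpha s)^{m+1}$, leaving $\alpha^{m+1}=s$ and no residual $k$-dependence. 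If you replace your monomial-by-monomial sum by this supremum-norm Taylor remainder estimate applied to $P-P^{K_+}$, the remainder of your argument — including the deduction of (\ref{R}) from (\ref{P-R}) and (\ref{Pnu}) by the triangle inequality — goes through and matches the paper.
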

\begin{proof}
Denote
\begin{align*}
I&=\sum_{|k|>K_+,~\imath,\jmath\in \mathbb{Z}_+^n}p_{k\imath\jmath}y^{\imath}z^\jmath e^{\sqrt{-1}\langle k,x\rangle},\\
II&=\sum_{|k|\leq K_+,~2|\imath|+|\jmath|>m}p_{k\imath\jmath}y^{\imath}z^\jmath e^{\sqrt{-1}\langle k,x\rangle}.
\end{align*}
Then
\begin{align*}
P-R=I+II.
\end{align*}
To estimate $I$, we notice by (\ref{Pnu}) that
\begin{align*}%\label{pki}
\left|\sum_{\imath\in \mathbb{Z}_+^n}\partial_\xi^\ell p_{k\imath}y^{\imath}\right|\leq \left|\partial_\xi^\ell P\right|_{D(s,r)}e^{-|k|r}\leq\gamma^{(m+1)(2d)^m}s^{m}\mu e^{-|k|r},
\end{align*}
where the first inequality has been frequently used in \cite{han,li,poschel}. This together with ${\textsc{(H1)}}$ and (\ref{Pnu}) yield
\begin{align}\label{I}
\left|\partial_\xi^\ell I\right|_{\hat D(s)}&\leq\sum_{|k|>K_+}\left|\partial_\xi^\ell P\right|_{D(s,r)}e^{-|k|\frac{r-r_+}{8}}\leq\gamma^{(m+1)(2d)^m}s^{m}\mu\sum_{\kappa=K_+}^{\infty}\kappa^{n}e^{-\kappa\frac{r-r_+}{8}}\notag\\
&\leq\gamma^{(m+1)(2d)^m}s^{m}\mu\int_{K_+}^{\infty}t^{n}e^{-t\frac{r-r_+}{16}}dt\leq\gamma^{(m+1)(2d)^m}s^{m+1}\mu.
\end{align}
It follows from (\ref{Pnu}) and (\ref{I}) that
\begin{align*}
\left|\partial_\xi^\ell (P-I)\right|_{\hat D(s)}\leq\left|\partial_\xi^\ell P\right|_{D(s,r)}+\left|\partial_\xi^\ell I\right|_{\hat D(s)}\leq2\gamma^{(m+1)(2d)^m}s^{m}\mu.
\end{align*}
For $2|p|+|q|=m+1$, let $\int$ be the obvious antiderivative of $\frac{\partial^{(p,q)}}{\partial y^pz^q}$. Then the Cauchy estimate of $P-I$ on $D_*$ yields
\begin{align*}
\left|\partial_\xi^\ell II\right|_{D_\alpha}&=\left|\partial_\xi^\ell \int\frac{\partial^{(p,q)}}{\partial y^pz^q}\sum_{|k|\leq K_+,~2|\imath|+|\jmath|> m} p_{k\imath\jmath}y^{\imath}z^\jmath e^{\sqrt{-1}\langle k,x\rangle}dydz\right|_{D_{\alpha}}\\
&\leq\left|\int\left|\frac{\partial^{(p,q)}}{\partial y^pz^q}\partial_\xi^\ell (P-I)\right|dydz\right|_{D_{\alpha}}\\
&\leq\left|\frac{c}{s^{m+1}}\int\left|\partial_\xi^\ell (P-I)\right|_{D_*}dydz\right|_{D_{\alpha}}\\
&\leq2\frac{c}{s^{m+1}}\gamma^{(m+1)(2d)^m}s^{m}\mu (\alpha s)^{m+1}\\
&\leq c\gamma^{(m+1)(2d)^m}s^{m+1}\mu.
\end{align*}
Thus,
\begin{align*}
\left|\partial_\xi^\ell (P-R)\right|_{D_\alpha}\leq c\gamma^{(m+1)(2d)^m}s^{m+1}\mu,
\end{align*}
and therefore,
\begin{align*}
\left|\partial_\xi^\ell R\right|_{D_\alpha}\leq\left|\partial_\xi^\ell(P-R)\right|_{D_\alpha}+\left|\partial_\xi^\ell P\right|_{D(s,r)}\leq c\gamma^{(m+1)(2d)^m}s^{m}\mu.
\end{align*}

This completes the proof.
\end{proof}

\subsubsection{Construct a symplectic transformation}

We will construct a symplectic transformation $\Phi_{+}$:
\begin{align*}\label{Phi+}
\Phi_{+}: D(s_{+},r_{+})\times G_+\rightarrow  D(s,r)\times G
\end{align*}
%(x_{+},y_{+},u_{+},v_{+},\xi_+)\in \Phi_{+}=(x,y,u,v)\in
such that it transforms Hamiltonian ($\ref{eq2}$) into the Hamiltonian of the next KAM cycle (at ($\nu$+1)-th step), i.e.,
\begin{equation*}\label{H+}
H_{+}=H\circ\Phi_{+}=N_{+}+P_{+},
\end{equation*}
where $N_{+}$ and $P_{+}$ have similar properties as $N$ and $P$ respectively on $D(s_{\nu+1},r_{\nu+1})$. The construction contains two steps: average process (see subsection \ref{HomologicalEquation}) and translation (see subsection \ref{translation}).

\subsubsection{Homological equation}\label{HomologicalEquation}
As usual, we shall construct a symplectic transformation as the time-1 map $\phi_{F}^1$ of the flow generated by a Hamiltonian $F$
%to eliminate the harmonic terms in $R$, e.g.,
%\begin{align*}
%p_{k\imath\jmath}y^{\imath}z^\jmath e^{\sqrt{-1}\langle k,x\rangle},~~~~0<|k|\leq K_+,2|\imath|+|
%\jmath|\leq m.
%\end{align*}
%We first construct a Hamiltonian $F$
of the form
\begin{align}\label{eq3}
F&=\sum_{0<|k|\leq K_+,~2|\imath|+|\jmath|\leq m}F_{k\imath\jmath}y^{\imath}z^\jmath e^{\sqrt{-1}\langle k,x\rangle}\notag\\
&=\sum_{0<|k|\leq K_+,~2|\imath|+|\jmath|\leq m}F_{k\imath\jmath}y_1^{\imath_1}\cdots y_n^{\imath_n}z_1^{\jmath_1}\cdots z_{2d}^{\jmath_{2d}}e^{\sqrt{-1}\langle k,x\rangle},
%&=\sum_{0<|k|\leq K_+}(F_{k00}+\langle F_{k10},y\rangle+\langle F_{k01},z\rangle+\langle z,F_{k02}z\rangle)e^{\sqrt{-1}\langle k,x\rangle},
\end{align}
where $F_{k\imath\jmath}$ are $(y, z, \xi)$-dependent vectors or matrices of obvious dimension.

The Hamiltonian $F$ can be determined by the following quasi-linear homological equation
\begin{equation}\label{eq4}
\{N,F\}+R-[R]-Q=0,
\end{equation}
where $[R]=\frac{1}{(2\pi)^n}\int_{\mathbb{T}^n}R(x,y,z)dx$ is the average of the truncation $R$, and the correction term
\begin{align}\label{Q}
Q=(\partial_zg+\partial_z\bar g)J\partial_zF\big|_{2|\imath|+|\jmath|> m}, %+\langle\nabla h(y)-\nabla h(0)+\nabla_y\bar{g},\nabla_xF\rangle,
%\sum_{0<|k|\leq K_+}\langle \frac{\partial g}{\partial z},J(F_{k01}+F_{k02}z+F_{k02}^\top z)\rangle e^{\sqrt{-1}\langle k,x\rangle}.
\end{align}
corresponds to the $(m+1)$-order and higher-order terms in $\langle \nabla_z \bar g, {\tilde{J}}\nabla_zF\rangle$.

{Recall (\ref{Nnu}), i.e., \begin{align*}
N=e(\xi)+\langle\omega(\xi),y\rangle+\tilde h(y,\xi)+g(z,\xi)+\bar{g}(y,z,\xi), \end{align*}
with
\begin{align*}
\tilde h(y,\xi)=\sum_{2\leq|i|}\tilde h_{0i0}(\xi)y^i,~~~
g(z,\xi)=\sum_{2\leq|j|}g_{00j}(\xi)z^j,~~~
\bar g(y,z,\xi)=\sum_{2|i|+|j|\leq m,1\leq |i|, |j| }\bar g_{0ij}(\xi)y^iz^j.
\end{align*}
%Recall that $\bar g$ comes from the perturbation and it is formulized in

Notice that
\begin{align}\label{NF}
\{N,F\}&=-\partial_yN\partial_xF+\partial_xN\partial_yF+\partial_zNJ\partial_zF\notag\\
&=-\sqrt{-1}\left\langle k,\omega+\nabla \tilde{h}(y)+\partial_y\bar g\right\rangle F_{k\imath\jmath}y^{\imath}z^\jmath e^{\sqrt{-1}\langle k,x\rangle}+(\partial_zg+\partial_z\bar g)J\partial_zF.
\end{align}
%We begin to calculate the second term of (\ref{NF}):
%\begin{align}\label{mA}
%\partial_y \bar g \partial_x F=&\sum_{2|i|+|j|\leq m;1\leq|i|,|j|}\bar g_{0ij} \partial_y(y^{i}) z^{j}\\
%&\sum_{0<|k|\leq K_+,~2|\imath'|+|\jmath'|\leq m}F_{k\imath'\jmath'}y^{\imath'}z^{\jmath'} \partial_x(e^{\sqrt{-1}\langle k,x\rangle})\notag\\
%=&\sum\mathcal{A}_{ij}F_{k(\imath-i+1)(\jmath-j)}y^{\imath}z^\jmath e^{\sqrt{-1}\langle k,x\rangle},\notag
%\end{align}
%where $\mathcal{A}_{ij}$ is concerned with $\partial_y^{i+1}\partial_z^j \bar g(0,0)$ and $k$, and $2|i|+|j|\leq m$, $1\leq|i|, |j|$.
In order to simplify the notations, we sometimes omit the subscript of $\sum$ and only use $\sum$ to represent the sum to the index over the corresponding range.

Now we calculate the last term of (\ref{NF}):
\begin{align}\label{mBC}
(\partial_zg+\partial_z\bar g)J\partial_zF=(\partial_zg+\partial_z\bar g)J\partial_zF\big|_{2|\imath|+|\jmath|\leq m}+Q.
\end{align}
Specially,
\begin{align}\label{mB}
\partial_zgJ\partial_zF\big|_{2|\imath|+|\jmath|\leq m}
&=\partial_zgJ\sum_{0<|k|\leq K_+,~2|\imath|+|\jmath|\leq m} F_{k\imath\jmath}y^{\imath}\partial_z(z^{\jmath}) e^{\sqrt{-1}\langle k,x\rangle}\notag\\
&\quad+\partial_zgJ\sum_{0<|k|\leq K_+,~2|\imath|+|\jmath|\leq m} \partial_z(F_{k\imath\jmath})y^{\imath}z^{\jmath} e^{\sqrt{-1}\langle k,x\rangle}\notag\\
&=:\sum_{} S_{\imath\jmath}F_{k\imath\jmath}y^\imath z^{\jmath}e^{\sqrt{-1}\langle k,x\rangle}+\sum_{2\leq|j'|}S_{j'}\partial_zF_{k\imath(\jmath-j'+1)}y^\imath z^\jmath e^{\sqrt{-1}\langle k,x\rangle}\notag\\
&=:\sum_{}\tilde S_{\imath\jmath}F_{k\imath\jmath}y^\imath z^{\jmath}e^{\sqrt{-1}\langle k,x\rangle}+\sum_{}(S_{\imath\jmath}-\tilde S_{\imath\jmath})F_{k\imath\jmath}y^\imath z^{\jmath}e^{\sqrt{-1}\langle k,x\rangle}\\
&\quad+\sum_{2\leq|j'|}S_{j'}\partial_zF_{k\imath(\jmath-j'+1)}y^\imath z^\jmath e^{\sqrt{-1}\langle k,x\rangle},\notag
\end{align}
where $0<|k|\leq K_+$, $2|\imath|+|\jmath|\leq m$, $2\leq|j'|$, $S_{\imath\jmath}$ is a ($|\imath|+|\jmath|$) order tensor and concerned with $\partial_z^2g(z)$ and $J$, $S_{j'}$ is concerned with $\partial_z^{j'}g(0)$ and $J$,
$\tilde S_{\imath\jmath}$ is a ($|\imath|+|\jmath|$) order tensor and concerned with $\partial_z^2g(0)$ and $J$.
%Specially,
%\begin{align}\label{mB}
%\partial_zgJ\partial_zF|_{2|\imath|+|\jmath|\leq m}
%&=\partial_zgJ\sum_{0<|k|\leq K_+,~2|\imath|+|\jmath|\leq m} F_{k\imath\jmath}y^{\imath}\partial_z(z^{\jmath}) e^{\sqrt{-1}\langle k,x\rangle}\notag\\
%&=:\sum_{}S_{\imath\jmath}F_{k\imath\jmath}y^\imath z^{\jmath}e^{\sqrt{-1}\langle k,x\rangle}+\sum_{}\mathcal{B}_{j'}F_{k\imath(\jmath-j'+1)}y^\imath z^{\jmath}e^{\sqrt{-1}\langle k,x\rangle},
%\end{align}
%where $S_{\imath\jmath}$ is a ($|\imath|+|\jmath|$) order tensor and concerned with $\partial_z^2g(0)$ and $J$, $\mathcal{B}_{j'}$  is concerned with $\partial_z^{j'+1}g(0)$ and $J$, and $2\leq j'$, ${0<|k|\leq K_+,~2|\imath|+|\jmath|\leq m}$.
And \begin{align}\label{mC}
\partial_z\bar gJ\partial_zF\big|_{2|\imath|+|\jmath|\leq m}=&\partial_z\bar g
J
\sum_{0<|k|\leq K_+,~2|\imath|+|\jmath|\leq m} F_{k\imath\jmath}y^{\imath}\partial _z(z^{\jmath}) e^{\sqrt{-1}\langle k,x\rangle}\notag\\
&+\partial_z\bar g
J
\sum_{0<|k|\leq K_+,~2|\imath|+|\jmath|\leq m} \partial _z(F_{k\imath\jmath})y^{\imath}z^{\jmath} e^{\sqrt{-1}\langle k,x\rangle}\notag\\
=:&\sum S_{ij}\left(F_{k(\imath-i)(\jmath+2-j)}+\partial_zF_{k(\imath-i)(\jmath+1-j)}\right)y^\imath z^\jmath e^{\sqrt{-1}\langle k,x\rangle}
%\sum \mathcal{C}_{ij}F_{k(\imath-i)(\jmath-j+2)}y^{\imath}z^{\jmath}e^{\sqrt{-1}\langle k,x\rangle},
\end{align}
where $S_{ij}$ is concerned with $\partial_{y}^{i}\partial_z^{j}\bar g(0,0)$ and $J$, and $2|i|+|j|\leq m$, $1\leq|i|$, $1\leq |j|$, ${0<|k|\leq K_+,~2|\imath|+|\jmath|\leq m}$.

Substituting (\ref{mBC}), (\ref{mB}) and (\ref{mC}) into (\ref{NF}), putting (\ref{Q}) and (\ref{NF}) into (\ref{eq4}),
comparing the coefficients, we thus obtain the following quasi-linear equations for all $0<|k|\leq K_+$, $2|\imath|+|\jmath|\leq m$:
\begin{align}\label{eq5}
%&\sqrt{-1}\langle k,\omega+\nabla \tilde{h}(y)\rangle F_{k00}=p_{k00},\\
%&\sqrt{-1}\langle k,\omega+\nabla \tilde{h}(y)\rangle F_{k\imath0}=p_{k\imath0}-S_{a0}F_{k(\imath-a)1},\\\label{thhomo}
&\left(\sqrt{-1}\left\langle k,\omega+\nabla \tilde{h}(y)+\nabla_y\bar{g}\right\rangle I_{(2d)^{\imath+\jmath}} -S_{\imath\jmath}\right)F_{k\imath\jmath}=p_{k\imath\jmath}+S_{j'}\partial_zF_{k\imath(\jmath-j'+1)}\\
&+S_{ij}F_{k(\imath-i)(\jmath+2-j)}+S_{ij}\partial_zF_{k(\imath-i)(\jmath+1-j)},\notag
%&(\sqrt{-1}\langle k,\omega+\nabla \tilde{h}(y)\rangle I_{(2d)^\jmath} -S_{0\jmath})F_{k0\jmath}=p_{k0\jmath},
\end{align}
%\begin{align}\label{eq5}
%%&\sqrt{-1}\langle k,\omega+\nabla \tilde{h}(y)\rangle F_{k00}=p_{k00},\\
%%&\sqrt{-1}\langle k,\omega+\nabla \tilde{h}(y)\rangle F_{k\imath0}=p_{k\imath0}-S_{a0}F_{k(\imath-a)1},\\\label{thhomo}
%&(\sqrt{-1}\langle k,\omega+\nabla \tilde{h}(y)+\nabla_y\bar{g}\rangle I_{(2d)^{\imath+\jmath}} +S_{\imath\jmath})F_{k\imath\jmath}=p_{k\imath\jmath}-\mathcal{A}_{ij}F_{k(\imath-i+1)(\jmath-j)}\\
%&+\mathcal{B}_{j'}F_{k\imath(\jmath-j'+1)}+\mathcal{C}_{i(j+1)}F_{k(\imath-i)(\jmath-j+1)},\notag
%%&(\sqrt{-1}\langle k,\omega+\nabla \tilde{h}(y)\rangle I_{(2d)^\jmath} -S_{0\jmath})F_{k0\jmath}=p_{k0\jmath},
%\end{align}
%&(\sqrt{-1}\langle k,\omega+\nabla \tilde{h}(y)+\nabla_y\bar{g}\rangle +S_{\imath\jmath})F_{k\imath\jmath}=p_{k\imath\jmath}-S_{\imath\jmath}^1F_{k(\imath-1)(\jmath+1)}-S_{\imath(\jmath)}^aF_{k(\imath-1)\jmath},\\\label{sechomo}
%$$a=[\frac{m}{2}]m-2\frac{[\frac{m}{2}](1+[\frac{m}{2}])}{2}$$
%$$a=[\frac{m}{2}](m-1)-[\frac{m}{2}]^2$$
where $i, j, j'$ are defined as above,  $S_{ij}F_{k(\imath-i)(\jmath+2-j)}$ stands for
$\sum_{2|i|+|j|\leq m, 1\leq |i|,|j|}S_{ij}F_{k(\imath-i)(\jmath+2-j)}$, and the rest terms are analogously defined.}

The above equations (\ref{eq5}) are solvable if the coefficient matrices are
nonsingular. We denote
%{\begin{align}\label{O+}
%G_{+}=&\{\xi\in G: |\langle k,\omega\rangle|>\frac{\gamma}{|k|^\tau}, |\det A_{\imath\jmath}|>\frac{\gamma^{(2d)^{\imath+\jmath}}}{|k|^{\tau(2d)^{\imath+\jmath}}},
% for~ k\in\mathbb{Z}^n,\\
%&0<|k|<K_+, 2|\imath|+|\jmath|\leq m, 1\leq|\jmath|\},\notag
%\end{align}
%with
%\begin{align*}\label{A}
%A_{\imath\jmath}=\sqrt{-1}\langle k,\omega\rangle I_{(2d)^{\imath+\jmath}}+\tilde{S}_{\imath\jmath},
%\end{align*}}
\begin{align}\label{O+}
{G_{+}}=&\left\{\xi\in G: |\langle k,\omega\rangle|>\frac{\gamma}{|k|^\tau},\overline{A_{\imath\jmath}^\top} A_{\imath\jmath}>\frac{\gamma^2}{|k|^{2\tau}}I_{(2d)^{\imath+\jmath}},  for~ k\in\mathbb{Z}^n,~0<|k|<K_+, 2|\imath|+|\jmath|\leq m, 1\leq|\jmath|\right\},%\notag
\end{align}
%\begin{align}\label{O+}
%O_{+}=\{\omega\in O: |\langle k,\omega\rangle|>\frac{\gamma}{|k|^\tau},~~|\det A_1|>\frac{\gamma^{2d}}{|k|^{\tau 2d}},~~|\det A_2|>\frac{\gamma^{{2d}^m}}{|k|^{\tau {2d}^m}}\}
%\end{align}
with
\begin{align}\label{A}
A_{\imath\jmath}=\sqrt{-1}\left\langle \frac{k}{|k|},\omega\right\rangle I_{(2d)^{\imath+\jmath}}+\frac{\tilde{S}_{\imath\jmath}}{|k|},
\end{align}
where $\tilde{S}_{\imath\jmath}$ is concerned with $\frac{\partial^2g(0)}{\partial z^2}$ and ${J}$.
%We declare that the quasi-linear equations (\ref{eq5}) are solvable under some suitable conditions.
Then, we can solve equations (\ref{eq5}) on $G_+$.
The details can be seen in the following lemma:
\begin{lemma}\label{le2}
Assume that
\begin{align*}
&{\textsc{(H2)}}:\max_{|\ell|\leq m,|i|\leq m}\left|\partial_\xi^\ell\partial_y^i\tilde{h}- \partial_\xi^\ell\partial_y^i\tilde{h}_0\right|_{D(s)\times G_+}\leq s_0^{\frac{1}{2}},\\
&{\textsc{(H3)}}:4s<\frac{\gamma-\gamma_+}{(M^*+2)K_+^{\tau+1}},
\end{align*}
%%\textbf{\textsc{(H3)}} $2s<\frac{\gamma-\gamma_+}{(M^*+M^{**}+2)K_+^{\tau+1}}$,\\
%where
%\begin{align*}
%%\nabla\bar h_0&=\nabla h_0(y)-\nabla h_0(\xi_0),\\
%%\nabla\bar h&=\nabla h(y)-\nabla h(\xi),\\
%M^*&=|\nabla\bar h_0(y,\xi_0)|_{D(s)}.
%\end{align*}
where
\begin{align*}
M^*=\max_{|\ell|\leq m,|i|\leq m }\left| \partial_\xi^\ell\partial_{y}^i\tilde h_0(y)\right|_{D(s)\times G_+}.
\end{align*}
The quasi-linear equations (\ref{eq5}) can be uniquely solved on $D(s)\times G_+$ to obtain the coefficient $F_{k\imath\jmath}$ which satisfy the following equality:
\begin{equation*}
\left|\partial_\xi^\ell\partial_y^i\partial_z^jF_{k\imath\jmath}\right|_{D(s)\times G_+}\leq c_2|k|^{|\ell|+|i|+|j|+(|\ell|+|i|+|j|+1)(2d)^\jmath\tau}s^{m-2|\imath|-|\jmath|}\mu e^{-|k|r},
\end{equation*}
for all $0<|k|\leq K_+$, $2|\imath|+|\jmath|\leq m$, $|\ell|+|i|+|j|\leq m$, where $c_2$ is a constant.
\end{lemma}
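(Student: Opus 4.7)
\textbf{Proof proposal for Lemma \ref{le2}.} The plan is to (i) show that for $\xi\in G_+$ the coefficient matrix on the left-hand side of \eqref{eq5} is invertible with an explicit bound on the norm of its inverse, (ii) solve \eqref{eq5} recursively on the weighted index $2|\imath|+|\jmath|$, because the right-hand side only involves $F_{k\imath'\jmath'}$ with strictly smaller weight, and (iii) bootstrap derivative estimates by differentiating the equation and repeatedly applying the identity $\partial(\mathcal{M}^{-1}) = -\mathcal{M}^{-1}(\partial\mathcal{M})\mathcal{M}^{-1}$.

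For step (i), I would write the coefficient matrix as
\begin{equation*}
\mathcal{M}_{k\imath\jmath}(y,z,\xi)= |k|\,A_{\imath\jmath}(\xi) + \sqrt{-1}\bigl\langle k,\nabla\tilde h(y)+\nabla_y\bar g(y,z,\xi)\bigr\rangle I_{(2d)^{\imath+\jmath}} + \bigl(\tilde S_{\imath\jmath}-S_{\imath\jmath}(z)\bigr),
\end{equation*}
with $A_{\imath\jmath}$ as in \eqref{A}. By definition of $G_+$, $\overline{A_{\imath\jmath}^\top}A_{\imath\jmath}>\gamma^2|k|^{-2\tau}I$, so $A_{\imath\jmath}^{-1}$ exists with $\|A_{\imath\jmath}^{-1}\|\le |k|^\tau/\gamma$. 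The three remaining pieces are small: the linear term is controlled by (H3) using $|y|<s^2$ together with (H2) to pass from $\tilde h$ to $\tilde h_0$ and bring in $M^*$; the bar-$g$ contribution is $O(s)$ on $D(s)$ because $\bar g$ vanishes at $(y,z)=0$ to order $|\imath|+|\jmath|\ge 2$; and $\tilde S_{\imath\jmath}-S_{\imath\jmath}(z)=O(|z|)=O(s)$ since $\tilde S_{\imath\jmath}$ is the value of $S_{\imath\jmath}$ at $z=0$. Altogether, these perturbations of $|k|\,A_{\imath\jmath}$ are bounded in norm by half of $\gamma|k|^{1-\tau}$, so a Neumann series yields $\|\mathcal{M}_{k\imath\jmath}^{-1}\|\le 2|k|^{\tau-1}/\gamma$ on $D(s)\times G_+$.

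For step (ii), I would carry out induction on $N:=2|\imath|+|\jmath|$ from $N=0$ up to $N=m$. The first check is that every term on the right-hand side of \eqref{eq5} has weighted index strictly smaller than $N$: the $\partial_z$-recursion term has weight $N-(|j'|-1)\le N-1$, the term $S_{ij}F_{k(\imath-i)(\jmath+2-j)}$ has weight $N+2-2|i|-|j|\le N-1$ since $2|i|+|j|\ge 3$, and similarly for the last term. Hence, assuming the estimate for all smaller $N$, the right-hand side is known, and $F_{k\imath\jmath}$ is defined uniquely by multiplying through by $\mathcal{M}_{k\imath\jmath}^{-1}$. The base bound for $p_{k\imath\jmath}$ comes from the Cauchy estimate on \eqref{R}: $|p_{k\imath\jmath}|\le c\,\gamma^{(m+1)(2d)^m}s^{m-N}\mu\,e^{-|k|r}$, and the constant $\gamma^{(m+1)(2d)^m}$ absorbs into $c_2$ (it is $<1$).

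Step (iii) is the main technical obstacle: obtaining the explicit polynomial in $|k|$ of degree $|\ell|+|i|+|j|+(|\ell|+|i|+|j|+1)(2d)^\jmath\tau$. I would proceed by simultaneous induction on $|\jmath|$ (outer) and on the total derivative order $|\ell|+|i|+|j|$ (inner). Each $\partial_\xi$ applied to $\mathcal{M}^{-1}$ produces a factor $\mathcal{M}^{-1}\partial_\xi\mathcal{M}\,\mathcal{M}^{-1}$; the factor $\partial_\xi\mathcal{M}$ contributes $|k|$ from $\langle k,\partial_\xi\omega\rangle$ while the two copies of $\mathcal{M}^{-1}$ add $|k|^{2\tau}/\gamma^2$, and Leibniz on higher derivatives gives the $(|\ell|+|i|+|j|+1)$ multiplicative factor in the exponent. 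The exponent $(2d)^\jmath$ in the $|k|^\tau$-power reflects the depth of the nested recursion in the $z$-direction: each inductive step in $|\jmath|$ requires inverting $\mathcal{M}$ once more and differentiating $F$'s of lower $\jmath$, multiplying the previous $|k|^{\cdot}$ exponent by roughly $2d$. Combining Cauchy estimates for $\partial_y^i\partial_z^j$ on $D(s)$ (which produce factors $s^{-2|i|-|j|}$, shifting the $s$-exponent from $m-N$ down correctly) with the recursive $|k|^{\tau}$-accumulation yields the claimed bound, with $c_2$ depending only on $m$, $n$, $d$, $L$ and the fixed bounds on $g$, $\bar g$, and $\tilde h$. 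The most delicate bookkeeping is tracking the $(2d)^\jmath$ factor, because one must verify that differentiating a previously-solved $F_{k\imath'(\jmath-j'+1)}$ in the recursion does not blow up the $|k|$ exponent beyond $(2d)^\jmath\tau$; I would handle this by a careful induction loop where each recursive use enters only after its derivative bound is already upgraded by the factor $(2d)$.
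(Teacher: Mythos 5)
Your overall strategy (invert the coefficient matrix, then recurse on the weighted index $2|\imath|+|\jmath|$, then bootstrap derivative bounds) is sound, and your step (i) is actually a genuinely \emph{different} and tidier route than the paper's. The paper bounds $B_{\imath\jmath}^{-1}$ via $\textrm{adj}B_{\imath\jmath}/\det B_{\imath\jmath}$, which is why a factor $(2d)^\jmath$ (the matrix dimension, essentially) appears in its exponent of $|k|^\tau$ and of $\gamma^{-1}$. You instead read off $\|A_{\imath\jmath}^{-1}\|\le|k|^\tau/\gamma$ directly from the spectral condition $\overline{A_{\imath\jmath}^\top}A_{\imath\jmath}>\gamma^2|k|^{-2\tau}I$ and then use a Neumann series; this gives a strictly sharper $\|B_{\imath\jmath}^{-1}\|\lesssim|k|^{\tau-1}/\gamma$ with \emph{no} $(2d)^\jmath$ factor at all. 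That is a legitimate simplification, and the stated lemma bound then follows a fortiori, since it is only an upper bound.

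However, two points in your write-up are off. First, and most importantly, your step (iii) explanation of the $(2d)^\jmath$ exponent is wrong. You claim it arises from "recursion depth in the $z$-direction," with each inductive step in $|\jmath|$ "multiplying the previous $|k|$-exponent by roughly $2d$." That mechanism does not exist: as you yourself verify, the recursion in (\ref{eq5}) strictly decreases the weighted index $2|\imath|+|\jmath|$ by at least $1$ per step, so the recursion depth is at most $m$; each step contributes \emph{one} factor of $B_{\imath\jmath}^{-1}$ and at most one $z$-derivative, so the accumulated $|k|$-power grows like $|k|^{O(m\tau)}$, not $|k|^{(2d)^\jmath\tau}$. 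In the paper the $(2d)^\jmath$ comes entirely from the adjugate/determinant inversion and is simply redundant under your tighter operator-norm bound; you should say so, rather than inventing a recursion story to force your answer to match the stated exponent. (For reference, the paper does not in fact track the recursion explicitly in its final step; it writes $|F_{k\imath\jmath}|\lesssim|B_{\imath\jmath}^{-1}|\,|p_{k\imath\jmath}|$ directly. You are right to notice that the recursive terms on the right of (\ref{eq5}) need to be accounted for; a clean way is a finite induction on $N=2|\imath|+|\jmath|$, noting the recursion depth is $\le m$ and each step contributes one $B^{-1}$ and one Cauchy factor.) Second, you say the factor $\gamma^{(m+1)(2d)^m}$ coming from the bound on $p_{k\imath\jmath}$ "absorbs into $c_2$ (it is $<1$)." This is not how it is used: $\gamma=\gamma(\varepsilon)\to0$, so it cannot be absorbed into a universal constant; rather, it is needed to \emph{cancel} the negative powers $\gamma^{-(|\ell|+|i|+|j|+1)(2d)^\jmath}$ coming from the repeated $B_{\imath\jmath}^{-1}$ (or $\gamma^{-(|\ell|+|i|+|j|+1)}$ in your version), which works precisely because $(m+1)(2d)^m$ dominates those exponents. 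Fix these two points and your argument stands and is, if anything, cleaner than the paper's.
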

\begin{proof}
For $\forall (y,\xi)\in D(s)\times G_+$, by {(H2)},{(H3)},
\begin{align}\label{eq11}
&\left|\nabla \tilde{h}\right|=\left|(\nabla \tilde{h}-\nabla \tilde{h}_0)+\nabla \tilde{h}_0\right|
\leq(1+M^*)|y|
<(1+M^*)s
<\frac{\gamma}{4|k|^{\tau+1}}.%\\\label{ybarg}
%&|\nabla_y\bar{g}|\leq\gamma_{-}^{(m+1)(2d)^m}s_{-}^{m-3}\mu_{-}s<\frac{\gamma}{4|k|^{\tau+1}}.
\end{align}
Recalling that $\bar g(y,z)$ comes from the perturbation and it is formulized in (\ref{bgnu}), we have
\begin{align}\label{ybarg}
|\nabla_y\bar{g}|\leq cs<\frac{\gamma}{4|k|^{\tau+1}},
\end{align}
with $c\leq\sum_{i=0}^{\nu-1}\gamma_{i}^{(m+1)(2d)^m}s_{i}^{m-3}\mu_{i}\ll1$, where the last equality follows from {(H3)}.
Notice by (\ref{eq11}) and (\ref{ybarg}) that
\begin{align*}
\left|\det\left\langle k,\nabla \tilde{h}+\nabla_y\bar{g}\right\rangle I_{(2d)^{\imath+\jmath}}\right|\leq \left(\frac{\gamma}{4|k|^\tau}\right)^{(2d)^{\imath+\jmath}},~~2|\imath|+|\jmath|\leq m.
\end{align*}
It follows from the definition of $S_{\imath\jmath}$, $\tilde S_{\imath\jmath}$ and $g$ that \begin{align*}
  \left|\det \left(S_{\imath\jmath}-\tilde{S}_{\imath\jmath}\right)\right|\leq\left|\det\left(|s|I_{(2d)^{\imath+\jmath}}\right)\right|<\left(\frac{\gamma}{4|k|^\tau}\right)^{(2d)^{\imath+\jmath}},~~2|\imath|+|\jmath|\leq m.
\end{align*}
%It follows from (\ref{O+}), (\ref{eq11}) and (\ref{ybarg}) that
%\begin{align*}
%|\langle k,\omega+\nabla \tilde{h}(y)+\nabla_y\bar{g}\rangle|>\frac{\gamma}{|k|^\tau}-\frac{\gamma}{2|k|^\tau}=\frac{\gamma}{2|k|^\tau}.
%\end{align*}
%Hence $L_k=:\sqrt{-1}\langle k,\omega+\nabla \tilde{h}(y)+\nabla_y\bar{g}\rangle$ is invertible and $F_{k\imath 0}=L_k^{-1}p_{k\imath0}$.
%Let $0<|k|\leq K_+$.
%and for any $\ell, i\in \mathbb{Z}^n$, $j\in\mathbb{Z}^{2d}$,
%\begin{align}\label{Lk-}
%|\partial_\xi^\ell\partial_y^i\partial_z^j L_k^{-1}|_{ D(s)\times G_+}&\leq c|k|^{|\ell|+|i|+|j|}|L_k^{-1}|^{|\ell|+|i|+|j|+1}\\
%&\leq c\frac{|k|^{\tau(|\ell|+|i|+|j|+1)+|\ell|+|i|+|j|}}{\gamma^{|\ell|+|i|+|j|+1}}.\notag
%\end{align}
%As for the details of (\ref{Lk-}), we refer the reader to \cite{han,li2}.
%Then, for $|\ell|+|i|+|j|\leq m$,
%\begin{align}\label{Fki0}
%|\partial_\xi^\ell\partial_y^i\partial_z^jF_{k\imath0}|_{ D(s)\times G_+}&\leq c\frac{|k|^{\tau(|\ell|+|i|+|j|+1)+|\ell|+|i|+|j|}}{\gamma^{|\ell|+|i|+|j|+1}} \gamma^{(m+1)(2d)^m}s^{m-2|\imath|}\mu e^{-|k|r}\notag\\
%&\leq c|k|^{\tau(|\ell|+|i|+|j|+1)+|\ell|+|i|+|j|}s^{m-2|\imath|}\mu e^{-|k|r}.
%\end{align}
%Notice that by (\ref{eq11}),
%\begin{align*}
%|\det\langle k,\nabla \tilde{h}\rangle I_{(2d)^{\imath+\jmath}}|\leq (\frac{\gamma}{4|k|^\tau})^{(2d)^{\imath+\jmath}}
%\end{align*}
%and
%\begin{align*}
%  |\det (S_{\imath\jmath}-\tilde{S}_{\imath\jmath})|\leq |\det(|\frac{\partial^3g}{\partial z^3}s|I_{(2d)^{\imath+\jmath}})|<(\frac{\gamma}{4|k|^\tau})^{(2d)^{\imath+\jmath}}.
%\end{align*}
This together with (\ref{eq5}) and (\ref{O+}) yield
\begin{align}\label{Bj}
\left|\det B_{\imath\jmath}\right|> \frac{\gamma^{(2d)^{\imath+\jmath}}}{2\left(|k|^\tau\right)^{(2d)^{\imath+\jmath}}},~~2|\imath|+|\jmath|\leq m,
\end{align}
where $B_{\imath\jmath}$ is the coefficient matrix of (\ref{eq5}). Then, by (\ref{Bj}),
\begin{align*}
\left|B_{\imath\jmath}\right|^{-1}=\left|\frac{\textrm{adj} B_{\imath\jmath}}{\det B_{\imath\jmath}}\right|\leq c\frac{|k|^{\tau(2d)^\jmath+(2d)^\jmath-1}}{\gamma^{(2d)^\jmath}},~~2|\imath|+|\jmath|\leq m,
\end{align*}
where $\textrm{adj} B_{\imath\jmath}$ is the algebraic minor of $B_{\imath\jmath}$.
Applying the identity
\begin{align*}
\partial_z^jB_{\imath\jmath}^{-1}=-\sum_{|j'|=1}^{|j|}\left(\begin{array}{c}
j\\
j'
\end{array}\right)\left(\partial_z^{j-j'}B_{\imath\jmath}^{-1}\partial_z^{j'}B_{\imath\jmath}\right)B_{\imath\jmath}^{-1}
\end{align*}
inductively, we have for $|\ell|+|i|+|j|\leq m$
%\begin{align*}
%\partial_y^i\partial_z^jB_\jmath^{-1}=-\sum_{|j'|=1}^{|j|}\left(\begin{array}{c}
%j\\
%j'
%\end{array}\right)(\partial_y^{j-j'}B_\jmath^{-1}\partial_y^{j'}B_\jmath)B_\jmath^{-1}
%\end{align*}
\begin{align*}
\left|\partial_\xi^\ell\partial_y^i\partial_z^j B_{\imath\jmath}^{-1}\right|_{ D(s)\times G_+}&\leq c|k|^{|\ell|+|i|+|j|}|B_{\imath\jmath}^{-1}|^{|\ell|+|i|+|j|+1}\\
&\leq c\frac{|k|^{|\ell|+|i|+|j|+(|\ell|+|i|+|j|+1)(2d)^\jmath\tau}}{\gamma^{(|\ell|+|i|+|j|+1)(2d)^\jmath}},~~2|\imath|+|\jmath|\leq m,
\end{align*}
which was precisely proved in \cite{li2,li}. We note by the Cauchy estimate that for $2|\imath|+|\jmath|\leq m$,
\begin{align*}%\label{eq14}
\left|\partial_\xi^\ell p_{k\imath\jmath}\right|_{G_+}&\leq\left| \partial_\xi^\ell P\right|_{ D(s)\times G_+}s^{-2|\imath|-|\jmath|}e^{-|k|r}\leq\gamma^{(m+1)(2d)^m}s^{m-2|\imath|-|\jmath|}\mu e^{-|k|r}.
\end{align*}
So, for $|\ell|+|i|+|j|\leq m$, $2|\imath|+|\jmath|\leq m$,
\begin{align*}
|\partial_\xi^\ell\partial_y^i\partial_z^jF_{k\imath\jmath}|_{ D(s)\times G_+}&\leq c\frac{|k|^{|\ell|+|i|+|j|+(|\ell|+|i|+|j|+1)(2d)^\jmath\tau}}{\gamma^{(|\ell|+|i|+|j|+1)(2d)^\jmath}}
\gamma^{(m+1)(2d)^m}s^{m-2|\imath|-|\jmath|}\mu e^{-|k|r}\\
&\leq c|k|^{|\ell|+|i|+|j|+(|\ell|+|i|+|j|+1)(2d)^\jmath\tau}s^{m-2|\imath|-|\jmath|}\mu e^{-|k|r}.
\end{align*}
This completes the proof.
\end{proof}
Next, we apply the above transformation $\phi_F^1$ to Hamiltonian $H$, i.e.,
\begin{align*}
H\circ\phi_F^1&=(N+R)\circ\phi_F^1+(P-R)\circ\phi_F^1\\
&=(N+R)+\{N,F\}+\int_0^1\{(1-t)\{N,F\}+ R,F\}\circ\phi_F^tdt+(P-R)\circ\phi_F^1\\
&=N+[R]+\int_0^1\{R_t,F\}\circ\phi_F^tdt+(P-R)\circ\phi_F^1+Q\\
&=:\bar N_++\bar P_+,
\end{align*}
where
\begin{align}%\label{eq15}
& \bar N_+=N+[R],\notag\\\label{eq16}
& \bar P_+=\int_0^1\{R_t,F\}\circ\phi_F^tdt+(P-R)\circ\phi_F^1+Q,\\
&R_t=(1-t)Q+(1-t)[R]+tR.\notag
\end{align}
\subsubsection{Translation}\label{translation}
In this subsection, we will construct a translation so as to eliminate the first order terms about $z$.

Consider the translation
$$\phi:x\rightarrow x,~~~~~y\rightarrow y,~~~~~z\rightarrow z+\zeta_+-\zeta,$$
where $z=(u,v)$, $\zeta_+$ is to be determined.
Let
$$\Phi_+=\phi_F^1\circ\phi.$$
Then
\begin{align}
H\circ\Phi_+&=N_++P_+,\notag\\%\label{eq17}
N_+&=\bar N_+\circ\phi,\notag\\\label{eq18}
P_+&=\bar P_+\circ\phi
\end{align}
with
%\begin{align*}
%\bar N_+\circ\phi&=(e+\langle\nabla h(\xi),y\rangle+\bar h(y)+g(z)+\bar{g}(y,z)+[R](y,z))\circ\phi\\
%&=e+\langle\nabla h(\xi),\xi_+-\xi\rangle+\langle\nabla h(\xi),y\rangle+\bar h(y+\xi_+-\xi)+g(z+\zeta_+-\zeta)\\
%&~~~~+\bar{g}(y+\xi_+-\xi,z+\zeta_+-\zeta)+[R](y+\xi_+-\xi,z+\zeta_+-\zeta)\\
%%&=h(y+\xi_+-\xi)+g(z+\zeta_+-\zeta)+[R](y+\xi_+-\xi,z+\zeta_+-\zeta)\\
%%&=h(y+\xi_+-\xi)+\langle p_{010},y\rangle+g(z+\zeta_+-\zeta)+\langle p_{001},z\rangle+[R](\xi_+-\xi,\zeta_+-\zeta)\\
%&=:h_++g_++\bar{g}_+,
%\end{align*}
\begin{align*}
N_+&=\bar{N}_+\circ\phi=(N+[R])\circ\phi=(e+\langle\omega,y\rangle+\tilde h(y)+g(z)+\bar{g}(y, z)+[R](y,z))\circ\phi\\
&=e+\langle\omega,y\rangle+\tilde h(y)+g(z+\zeta_+-\zeta)+\bar{g}(y, z+\zeta_+-\zeta)+[R](y,z+\zeta_+-\zeta)\\
&=:e_++\langle\omega_+,y\rangle+\tilde h_++g_++\bar{g}_+,
\end{align*}
where
\begin{align}\label{e+}
e_+&=e+g(\zeta_+-\zeta)+[R](0,\zeta_+-\zeta),\\\label{omega+}
\omega_+&=\omega+\nabla_y[R](0,\zeta_+-\zeta)+\nabla_y\bar g(0,\zeta_+-\zeta),\\\label{h+}
\tilde h_+&=\tilde h(y)+[R](y,\zeta_+-\zeta)-[R](0,\zeta_+-\zeta)-\langle\nabla_y[R](0,\zeta_+-\zeta),y\rangle\\
&\quad+\bar g(y,\zeta_+-\zeta)-\langle\nabla_y\bar g(0,\zeta_+-\zeta),y\rangle,\notag\\\label{g+}
%h_+(y)&=h(y)+g(\zeta_+-\zeta)+[R](y,\zeta_+-\zeta)+\bar{g}(y^\imath(\zeta_+-\zeta)^\jmath),\\\label{g+}
g_+&=g(z+\zeta_+-\zeta)-g(\zeta_+-\zeta)+[R](0,z+\zeta_+-\zeta)-[R](0,\zeta_+-\zeta),\\\label{barg}
\bar{g}_+&=\bar{g}(y, z+\zeta_+-\zeta)-\bar g(y,\zeta_+-\zeta)+[R](y,z+\zeta_+-\zeta)-[R](y,\zeta_+-\zeta)\\
&\quad-[R](0,z+\zeta_+-\zeta)+[R](0,\zeta_+-\zeta).\notag%\\\label{M}
%M(y,z)&=[R](y+\xi_+-\xi,z+\zeta_+-\zeta)-[R](\xi_+-\xi,\zeta_+-\zeta)-[R](y,z)+[R](0,0),
\end{align}

\subsubsection{Eliminate the first order terms about $z$ }
In this subsection, we will appropriately choose $\zeta_+$ to remove the first order terms about $z$. The concrete details are expressed as the following lemma, which is crucial to our proof.
\begin{lemma}\label{le3}
Notice that
\begin{align}\label{eq47}
\nabla g_+(0)=\nabla g(\zeta_+-\zeta)+\nabla_z[R](0,\zeta_+-\zeta).
\end{align}
There exists
$\zeta_+\in B_{ (s_{-}^{m-1}\mu_{-})^\frac{1}{L}}(\zeta)$
such that
\begin{align*}
\nabla g_+(0)=\nabla g(0)=\cdots=\nabla g_0(0)=0.
\end{align*}
\end{lemma}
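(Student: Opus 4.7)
The plan is to find $\zeta_+$ by a topological degree argument applied to the $\mathbb R^{2d}$-valued map
\[
F(w) \;:=\; \nabla g(w) + \nabla_z[R](0, w), \qquad w \in \overline{B_\delta(0)},
\]
with $\delta = (s_-^{m-1}\mu_-)^{1/L}$; any zero $w_*$ of $F$ in $B_\delta(0)$ produces $\zeta_+ = \zeta + w_*$, and (\ref{eq47}) immediately yields $\nabla g_+(0)=0$. To detect such a zero I will interpolate between $\nabla g$ and $F$ via the affine homotopy
\[
H(t,w) \;:=\; \nabla g(w) + t\,\nabla_z[R](0,w), \qquad t \in [0,1],
\]
and invoke the homotopy invariance of the Brouwer degree, which is the whole point of including the degree condition in (A0).

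The degree of the $t=0$ endpoint is controlled by (A0). The inductive hypothesis from the previous KAM step gives $\nabla g(0)=\nabla g_\nu(0)=0$, and the weak convexity inequality $|\nabla g(z)-\nabla g(z_*)|\geq \sigma|z-z_*|^L$ from (A0) (inherited from the original $g$ after absorbing the small $O(|z|^2)$ corrections that distinguish $g_\nu$ from $g$ into a modified constant $\sigma/2$) shows that $0$ is the unique zero of $\nabla g$ in $\mathcal O$. Excision of the Brouwer degree together with the nonvanishing-degree clause in (A0) gives
\[
\deg(\nabla g, B_\delta(0), 0) \;=\; \deg\bigl(\nabla g - \nabla g(\zeta_0), \mathcal O^o, 0\bigr) \;\neq\; 0.
\]

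It remains to verify the homotopy is admissible, that is, $H(t,w)\neq 0$ for every $(t,w)\in[0,1]\times\partial B_\delta(0)$. On $\partial B_\delta(0)$ the weak convexity yields $|\nabla g(w)|\geq \frac{\sigma}{2}\delta^L=\frac{\sigma}{2}s_-^{m-1}\mu_-$, while the bound (\ref{R}) from Lemma \ref{le1} combined with a single Cauchy estimate in $z$ gives
\[
|\nabla_z[R](0, w)| \;\leq\; c\,\gamma^{(m+1)(2d)^m}\,s^{m-1}\mu.
\]
Under the KAM scaling $s_+=\tfrac{1}{8}s^{2\rho+1}$ and $\mu_+=8^m c_0\mu s^{\rho}$ one checks $s_-^{m-1}\mu_-\gg s^{m-1}\mu$ as $\varepsilon_0\to 0$, so the lower bound on $|\nabla g|$ strictly dominates and $|H(t,w)|>0$ uniformly on the boundary. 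Homotopy invariance then gives $\deg(F,B_\delta(0),0)=\deg(\nabla g,B_\delta(0),0)\neq 0$, producing the required zero $w_*\in B_\delta(0)$.

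The main obstacle is verifying that the weak-convexity inequality from (A0) indeed survives the replacement of the original $g$ by the step-$\nu$ function $g_\nu$: the accumulated correction $\sum_{i=0}^{\nu-1}\gamma_i^{(m+1)(2d)^m}s_i^{m-2}\mu_i\,O(|z|^2)$ has gradient of size at most $c|w|$ times a geometrically small factor, and forcing this to be absorbed into $\frac{\sigma}{2}|w|^L$ on $\overline{B_\delta(0)}$ relies on the exponent relation (\ref{m}) between $m$ and $L$ together with the geometric smallness of the KAM parameters. Once this has been checked, the chain $\nabla g_+(0)=\nabla g(0)=\cdots=\nabla g_0(0)=0$ in the statement follows immediately by unrolling the inductive hypothesis $\nabla g_i(0)=0$ at each preceding step.
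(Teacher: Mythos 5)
Your proof is correct and follows essentially the same strategy as the paper's: use the topological-degree condition from (A0) (propagated inductively through the weak-convexity inequality) together with the homotopy between $\nabla g$ and $\nabla g + \nabla_z[R]$ to detect a zero near $\zeta$. The only structural difference is the order of operations: the paper first verifies the homotopy $\mathcal{H}_t(z)=\nabla g(z-\zeta)-\nabla g(0)+t\,\nabla_z[R](0,z-\zeta)$ on the large boundary $\partial\mathcal{O}$ (where $|\nabla g|$ is bounded below by a fixed constant, so admissibility is easy) and only afterwards excises down to $B_{(s_-^{m-1}\mu_-)^{1/L}}(\zeta)$ using the lower bound on $\mathcal{H}_1$ away from the small ball; you excise first (using the weak convexity to locate the zero of $\nabla g$ inside $B_\delta(0)$) and then run the homotopy on the small ball, where admissibility now rests on the comparison $\sigma\,s_-^{m-1}\mu_-\gg \gamma^{(m+1)(2d)^m}s^{m-1}\mu$. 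The two orderings place the KAM-scaling comparison in different steps (paper: excision step; you: homotopy step), but the comparison itself is identical, so nothing is gained or lost. You also correctly flag the real bookkeeping issue — that the degree and weak-convexity conditions must be re-verified at each step with the accumulated $O(|z|^2)$ corrections — which the paper handles via the inductive hypotheses (\ref{degg}), (\ref{nag}) and the decreasing sequence $\sigma_\nu$ rather than a one-shot replacement $\sigma\mapsto\sigma/2$, though the effect is the same.
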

\begin{proof}
The proof will be completed by induction on $\nu$.
We start with the case $\nu=0$. It follows from (\ref{eq62}) and {(A0)} that
\begin{align*}
&\nabla g_0(\zeta_0)=\nabla g_0(0)=0,\\%\label{degg0}
&\deg(\nabla g_0(\cdot)-\nabla g_0(0),\mathcal{O}^o,0)\neq0,\\%\label{nag0}
&\left|\nabla g_0(z)-\nabla g_0(z_*)\right|\geq\sigma_0|z-z_*|^L.
\end{align*}
Now assume that for some $\nu\geq1$ we have got
\begin{align}
&\nabla g_i(0)=\nabla g_{i-1}(\zeta_i-\zeta_{i-1})+\nabla_z[R_{i-1}](0,\zeta_i-\zeta_{i-1})=0,\notag\\\label{degg}
&\deg(\nabla g_i(\cdot)-\nabla g_i(0),\mathcal{O}^o,0)\neq0,\\\label{nag}
&|\nabla g_i(z)-\nabla g_i(z_*)|\geq\sigma_i|z-z_*|^L,
\end{align}
where $i=1,2,\cdots,\nu,$ $\zeta_i\in B_{ (s_{i-2}^{m-1}\mu_{i-2})^\frac{1}{L}}(\zeta_{i-1})$, $s_{-1}=s_0$, $\mu_{-1}=\mu_0$, $z_*\in\mathcal{O}$, $z\in \mathcal{O}\backslash B_{(s_{i-1}^{m-1}\mu_{i-1})^{\frac{1}{L}}}(z_*). $
%\begin{align}\label{degg}
%&\deg(\nabla g(\cdot),G,\nabla g(0))\neq0,\\\label{nag}
%&|\nabla g(y)-\nabla g(y_*)|\geq\sigma|y-y_*|^L.
%\end{align}
Then, we need to find $\zeta_+$ near $\zeta$ such that $\nabla g_+(0)=\nabla g(0)$.

Consider homotopy $\mathcal{H}_t(z):[0,1]\times \mathcal{O}\rightarrow \mathbb{R}^{2d}$,
\begin{align*}
\mathcal{H}_t(z)&=:\nabla g(z-\zeta)-\nabla g(0)+t\nabla_z[R](0,z-\zeta).
\end{align*}
Notice that
\begin{align}\label{ezr}
%\label{enabarg}
%&|\nabla_z\bar{g}(y,z)|\leq \gamma_0^{(2d)^m}\sum_{i=0}^{\nu-1}s_i^{m-1}\mu_i,\\\label{nazM}
%&|\nabla_zM(0,z)|\leq\gamma_0^{(2d)^m}s^{m-1}\mu,\\\label{ezr}
&\left|\nabla_z[R](y,z)\right|\leq\gamma^{(m+1)(2d)^m}s^{m-1}\mu.
\end{align}
For any $z\in\partial\mathcal{O}$, $t\in[0,1]$, by (\ref{nag}) and (\ref{ezr}), we have
\begin{align*}
\left|\mathcal{H}_t(z)\right|
%&\geq|\nabla g(z)-\nabla g(\zeta)|
%-|\nabla g_+(z)-\nabla g(z)|\\
&\geq\left|\nabla g(z-\zeta)-\nabla g(0)|-|\nabla_z[R](0,z-\zeta)\right|\\
&\geq\sigma|z-\zeta|^L-\gamma^{(m+1)(2d)^m}s^{m-1}\mu\\
&>\frac{\sigma\delta^L}{2},
%&>0.
\end{align*}
where $\delta:=\min\left\{|z-\zeta|, \forall z\in\partial\mathcal{O}\right\}$.
So, it follows from the homotopy invariance and (\ref{degg}) that
\begin{align}\label{H1}
\deg(\mathcal{H}_1(\cdot),\mathcal{O}^o,0)=\deg(\mathcal{H}_0(\cdot),\mathcal{O}^o,0)\neq0.
\end{align}
%by the property of topological degree, we have
%\begin{align*}
%\deg(H_1(\cdot)+\nabla_z\bar{g}(y,\cdot)+\partial_z[R](0,0),B_\delta(\zeta),0)\neq0,
%\end{align*}
%So, there exist at least a $\zeta_+\in B_\delta(\zeta)$ such that
%\begin{align}\label{g+=g0}
%\nabla g(\zeta-\zeta_+)+\nabla_z\bar{g}(\xi_+-\xi,\zeta_+-\zeta)+\partial_z[R](0,0)=\nabla g(0),
%\end{align}
%
%i.e.,
%$$\nabla g_+(0)=\nabla g(0).$$
%By (\ref{nag}), (\ref{enabarg}), we have
%\begin{align}\label{nagz-g0}
%|\nabla g(z-\zeta)-\nabla g(0)+\nabla_z\bar{g}(\xi_+-\xi,z-\zeta)|\geq\sigma|\zeta_+-\zeta|^L
%\end{align}
We note by (\ref{nag}) and (\ref{ezr}) that for any $z\in\mathcal{O}\backslash B_{(s_{-}^{m-1}\mu_{-})^{\frac{1}{L}}}(\zeta)$,
\begin{align*}
|\mathcal{H}_1(z)|&=|\nabla g(z-\zeta)-\nabla g(0)+\nabla_z[R](0,z-\zeta)|\\
&\geq|\nabla g(z-\zeta)-\nabla g(0)|-|\nabla_z[R](0,z-\zeta)|\\
&\geq\sigma|z-\zeta|^L-\gamma^{(m+1)(2d)^m}s^{m-1}\mu\\
&\geq\sigma s_{-}^{m-1}\mu_{-}-\gamma^{(m+1)(2d)^m}s^{m-1}\mu\\
&\geq\frac{\sigma}{2}s_{-}^{m-1}\mu_{-}.
\end{align*}
Hence, by excision and (\ref{H1}),
\begin{align*}
\deg(\mathcal{H}_1(\cdot),B_{ (s_{-}^{m-1}\mu_{-})^{\frac{1}{L}}}(\zeta),0)=\deg(\mathcal{H}_1(\cdot),\mathcal{O}^o,0)\neq0,
\end{align*}
then there exist at least a $\zeta_+\in B_{ (s_{-}^{m-1}\mu_{-})^{\frac{1}{L}}}(\zeta)$, such that
\begin{align*}
\mathcal{H}_1(\zeta_+)=0,
\end{align*}
i.e.,
\begin{align*}
\nabla g(\zeta_+-\zeta)+\nabla_z[R](0,\zeta_+-\zeta)=\nabla g(0),
\end{align*}
thus, by (\ref{eq47}),
\begin{align}\label{g+=g0}
\nabla g_+(0)=\nabla g(0)=\cdots=\nabla g_0(0)=0.
\end{align}
Next, we need to prove
\begin{align}\label{degg+}
&\deg(\nabla g_+(\cdot)-\nabla g_+(0),\mathcal{O}^o,0)\neq0,\\\label{nag+}
&|\nabla g_+(z)-\nabla g_+(z_*)|\geq\sigma_+|z-z_*|^L.
\end{align}
By (\ref{g+}),
\begin{align*}
\nabla g_+(z)=\nabla g(z+\zeta_+-\zeta)+\nabla_z [R](0,z+\zeta_+-\zeta).
\end{align*}
Then
\begin{align}\label{g+-g}
\nabla g_+(z)-\nabla g(z)&=\nabla g(z+\zeta_+-\zeta)-\nabla g(z)+\nabla_z [R](0,z+\zeta_+-\zeta),
\end{align}
and
\begin{align}\label{g+-g+}
\nabla g_+(z)-\nabla g_+(z_*)&=\nabla g(z+\zeta_+-\zeta)-\nabla g(z_*+\zeta_+-\zeta)\\
&\quad+\nabla_z [R](0,z+\zeta_+-\zeta)-\nabla_z [R](0,z_*+\zeta_+-\zeta).\notag
\end{align}
In view of (\ref{ezr}), (\ref{g+-g}), and $\zeta_+\in B_{(s_{-}^{m-1}\mu_{-})^\frac{1}{L}}(\zeta)$, we get
\begin{align*}%\label{nag+-nag}
|\nabla g_+(z)-\nabla g(z)|\leq c(s_{-}^{m-1}\mu_{-})^\frac{1}{L}.
\end{align*}
This together with the property of degree, (\ref{degg}) and (\ref{g+=g0}) that (\ref{degg+}) holds, i.e.,
\begin{align*}
\deg(\nabla g_+(\cdot)-\nabla g_+(0),\mathcal{O}^o,0)&=\deg(\nabla g_+(\cdot)-\nabla g(\cdot)+\nabla g(\cdot)-\nabla g(0),\mathcal{O}^o,0)\\
&=\deg(\nabla g(\cdot)-\nabla g(0),\mathcal{O}^o,0)\neq0.
\end{align*}
It follows from (\ref{nag}), (\ref{ezr}) and (\ref{g+-g+}) that for any $z\in\mathcal{O}\backslash B_{(s^{m-1}\mu)^{\frac{1}{L}}}(z_*)$
\begin{align*}
|\nabla g_+(z)-\nabla g_+(z_*)|\geq \sigma|z-z_*|^L-2\gamma^{(m+1)(2d)^m}s^{m-1}\mu\geq \sigma_+|z-z_*|^L,
\end{align*}
which implies (\ref{nag+}).

This completes the proof.
\end{proof}
\subsubsection{Estimate on $N_+$}
Now, we give the estimate of $N_+$.
\begin{lemma}\label{le5}
%Assume $(H2), (H3)$. %For any $\xi_0\in {G_+}_+$, let ${G_+}(\xi_0,\delta_+)\triangleq\{y'\in {G_+}_+:|y'-\xi_0|<\delta_+\}$.
%Then
There is a constant $c_3$ such that for all $|\ell|\leq m$:
\begin{align}\label{eq23}
&|\partial_\xi^\ell(\zeta_{+}-\zeta)|_{G_+}\leq c_3 (s_{-}^{m-1}\mu_{-})^{\frac{1}{L}},\\\label{eq24}
&|\partial_\xi^\ell (e_{+}- e)|_{G_+}\leq c_3(s_{-}^{m-1}\mu_{-})^{\frac{1}{L}},\\\label{omega+-omega}
&|\partial_\xi^\ell(\omega_+-\omega)|_{G_+}\leq c_3(s_{-}^{m-1}\mu_{-})^{\frac{1}{L}},\\\label{eq25}
&|\partial_\xi^\ell(\tilde h_+-\tilde h)|_{D(s_+)\times G_+}\leq c_3(s_{-}^{m-1}\mu_{-})^{\frac{1}{L}},\\\label{eq48}
&|\partial_\xi^\ell( g_+- g)|_{D(s_+)\times G_+}\leq c_3(s_{-}^{m-1}\mu_{-})^{\frac{1}{L}},\\\label{bg+-bg}%c_3(s^{m-1}\mu)^{\frac{1}{L}}.
&|\partial_\xi^\ell(\bar{g}_+-\bar{g})|_{D(s_+)\times G_+}\leq c_3(s_{-}^{m-1}\mu_{-})^{\frac{1}{L}}.
\end{align}
%Moreover, assume that
%\begin{align*}
%&\textbf{\textsc{(H2)}}:c_3(s^{m-1}\mu)^{\frac{1}{L}}<\frac{1}{8}\alpha s,\\
%&\textbf{\textsc{(H3)}}:c_3(s^{m-2}\mu)^{\frac{1}{L'}}<(\frac{1}{8}\alpha s)^2.
%\end{align*}
%Then the following estimates hold:
%\begin{align}\label{eq25}
%&|\tilde h_+-\tilde h|_{D(s_+)}\leq c_4(\alpha s)^{2},\\\label{eq48}
%&|g_+-g|_{D(s_+)}\leq c_4(\alpha s)^{2},\\\label{bg+-bg}%c_3(s^{m-1}\mu)^{\frac{1}{L}}.
%&|\bar{g}_+-\bar{g}|_{D(s_+)}\leq c_4\gamma_0^{(2d)^m}s^m\mu,
%\end{align}
%where $c_4$ is a constant.
\end{lemma}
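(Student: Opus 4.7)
The plan is to handle the six estimates in the order listed, since each later bound ultimately depends on the first one via the translation formulas (\ref{e+})--(\ref{barg}) together with Cauchy estimates applied to $[R]$ (using Lemma \ref{le1}) and to the inherited term $\bar g$ (using its explicit polynomial form (\ref{bgnu})).

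First I would prove (\ref{eq23}). For $|\ell|=0$ this is immediate from Lemma \ref{le3}, which locates $\zeta_+$ in the ball $B_{(s_-^{m-1}\mu_-)^{1/L}}(\zeta)$. For $1\le|\ell|\le m$, I would differentiate the defining identity $\nabla g(\zeta_+-\zeta)+\nabla_z[R](0,\zeta_+-\zeta)=\nabla g(0)=0$ in $\xi$. The weak convexity assumption (A0) gives a one-sided lower bound $|\nabla g(z)-\nabla g(z_*)|\ge\sigma|z-z_*|^L$, so invertibility of $\nabla^2 g$ is not available directly; instead I would combine the lower bound with the Cauchy estimate $|\partial_\xi^\ell\nabla_z[R](0,\cdot)|\le c\,\gamma^{(m+1)(2d)^m}s^{m-1}\mu$ from Lemma \ref{le1} to derive an inequality of the form $c\,\sigma|\partial_\xi^\ell(\zeta_+-\zeta)|^L\le (\text{lower-order terms in }\ell)+c\,\gamma^{(m+1)(2d)^m}s^{m-1}\mu$, and then bootstrap inductively on $|\ell|$. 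This is the main obstacle of the lemma, since it is where the complete degeneracy of $g$ interacts nontrivially with the Whitney-smooth $\xi$-dependence.

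Once (\ref{eq23}) is in hand, (\ref{eq24})--(\ref{omega+-omega}) are direct. Writing (\ref{e+}) as $e_+-e=g(\zeta_+-\zeta)+[R](0,\zeta_+-\zeta)$ and using $g=o(|z|^2)$ from (\ref{eq62}) gives $|g(\zeta_+-\zeta)|\le c|\zeta_+-\zeta|^2\le c(s_-^{m-1}\mu_-)^{2/L}$, while the Cauchy bound on $[R]$ contributes $\gamma^{(m+1)(2d)^m}s^m\mu$; both are absorbed into a single constant multiple of $(s_-^{m-1}\mu_-)^{1/L}$. The $\omega_+-\omega$ bound follows the same pattern from (\ref{omega+}) using Cauchy estimates for $\nabla_y[R]$ and $\nabla_y\bar g$ at the translated point, together with the inductive estimate on $\partial_\xi^\ell(\zeta_+-\zeta)$ and the product/chain rule.

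Finally, for (\ref{eq25}), (\ref{eq48}) and (\ref{bg+-bg}), the strategy is to write each of $\tilde h_+-\tilde h$, $g_+-g$, $\bar g_+-\bar g$ as a Taylor remainder in the translation variable $\zeta_+-\zeta$ about $y=0$ or $z=0$, using the explicit formulas (\ref{h+})--(\ref{barg}). On the smaller polydisk $D(s_+)$ the factors $y$ and $z$ are of size $s_+^2$ and $s_+$ respectively, so each difference is bounded by $c(s_-^{m-1}\mu_-)^{1/L}$ multiplied by either a power of $s_+$ or by a bound on Cauchy-estimated derivatives of $[R]$ and $\bar g$; in particular the quadratic-in-$y$ structure of $\tilde h_+-\tilde h$ (the linear-in-$y$ corrections having been subtracted in (\ref{h+})) and the quadratic-in-$z$ structure of $g_+-g$ (by construction of $\zeta_+$) ensure the required smallness. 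Uniformity in $\xi\in G_+$ and the estimates for $|\ell|\le m$ derivatives follow from applying the same argument to $\partial_\xi^\ell$ of each formula, using Lemma \ref{le1}, the Cauchy estimate, and the already established bound (\ref{eq23}).
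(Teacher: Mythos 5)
The bounds (\ref{eq24})--(\ref{bg+-bg}) you handle essentially as the paper does: substitute into the translation formulas (\ref{e+})--(\ref{barg}), exploit the smallness of $[R]$ from Lemma~\ref{le1} together with Cauchy estimates, and feed in (\ref{eq23}). That part of your proposal matches the paper's proof.

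For (\ref{eq23}) with $|\ell|\geq 1$, however, you take a genuinely different route, and as sketched it has a gap. You are right to flag that $\nabla^2 g$ degenerates at the origin (since $g=o(|z|^2)$), so that naively inverting the linearized identity
$\bigl(\nabla^2 g(\zeta_+-\zeta)+\nabla_z^2[R](0,\zeta_+-\zeta)\bigr)\partial_\xi(\zeta_+-\zeta)=-\partial_\xi\nabla_z[R](0,\zeta_+-\zeta)$
is delicate --- a point the paper's one-line ``applying this identity inductively\dots we can immediately get (\ref{eq23})'' passes over. But your substitute does not close. Assumption (A0) is a lower bound on \emph{differences of function values} of $\nabla g$, not on its derivatives. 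Applying it to $\delta:=\zeta_+-\zeta$ at two nearby parameters $\xi,\xi'$ via the defining identity gives only
\[
\sigma\,|\delta(\xi)-\delta(\xi')|^{L}\le \bigl|\nabla_z^2[R]\bigr|\,|\delta(\xi)-\delta(\xi')|+\bigl|\partial_\xi\nabla_z[R]\bigr|\,|\xi-\xi'|.
\]
Dividing by $|\xi-\xi'|$ and letting $\xi'\to\xi$, the left-hand coefficient $\sigma|\delta(\xi)-\delta(\xi')|^{L-1}$ tends to zero (recall $L\geq2$), so the inequality degenerates and controls neither $\partial_\xi\delta$ nor $\partial_\xi^\ell\delta$ for $2\leq|\ell|\leq m$; at best it yields H\"older-$1/L$ continuity of $\delta$ in $\xi$, which falls far short of the $C^m$-type bound (\ref{eq23}). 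An inequality of the shape $c\sigma|\partial_\xi^\ell(\zeta_+-\zeta)|^{L}\le(\cdots)$ therefore cannot be extracted from (A0) alone, and your inductive bootstrap has nothing to stand on. You would need to return to the linearized system as the paper does (and address the invertibility issue there, for instance via the structure of the coefficient matrix and the Whitney-extension machinery surrounding Lemma~\ref{ext}) to obtain the $\xi$-derivative estimates.
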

\begin{proof}
Differentiating (\ref{eq47}) with respect to $\xi$ yields $$\left(\nabla^2g(\zeta_+-\zeta)+\nabla_z^2[R](0,\zeta_+-\zeta)\right)\partial_\xi(\zeta_+-\zeta)=\partial_\xi\nabla g(\zeta_+-\zeta)+\partial_\xi\nabla_z[R](0,\zeta_+-\zeta).$$
Applying this identity inductively and using  $\zeta_+\in B_{ (s_{-}^{m-1}\mu_{-})^{\frac{1}{L}}}(\zeta)$ in Lemma \ref{le3}, we can immediately get (\ref{eq23}).
Recall $e_+$ and $\omega_+$ in (\ref{e+}) and (\ref{omega+}), respectively, we see that
\begin{align*}
\left|\partial_\xi^\ell(e_+-e)\right|_{G_+}&\leq c\left|\partial_\xi^\ell(\zeta_+-\zeta)\right|^2+\gamma^{(m+1)(2d)^m}s^m\mu\leq c\left(s_{-}^{m-1}\mu_{-}\right)^{\frac{1}{L}},\\
\left|\partial_\xi^\ell(\omega_+-\omega)\right|_{G_+}&\leq c\left|\partial_\xi^\ell(\zeta_+-\zeta)\right|+  \gamma^{(m+1)(2d)^m}s^{m-2}\mu\leq c\left(s_{-}^{m-1}\mu_{-}\right)^{\frac{1}{L}},
\end{align*}
which implies (\ref{eq24}) and (\ref{omega+-omega}).
%\begin{align*}
%|\omega_+-\omega|\leq |\zeta_+-\zeta|+  \gamma^{(m+1)(2d)^m}s^{m-2}\mu\leq (s^{m-1}\mu)^{\frac{1}{L}}
%\end{align*}

In view of (\ref{h+}), we have that
\begin{align*}
\left|\partial_\xi^\ell(\tilde h_+(y)-\tilde h(y))\right|_{D(s_+)}&=\left|\partial_\xi^\ell([R](y,\zeta_+-\zeta)-[R](0,\zeta_+-\zeta)-\langle\nabla_y[R](0,\zeta_+-\zeta),y\rangle)\right|\\
&~~~~+\left|\partial_\xi^\ell(\bar g(y^\imath(\zeta_+-\zeta)^\jmath)-\langle\nabla_y\bar g((\zeta_+-\zeta)^\jmath),y\rangle)\right|\\
&\leq \gamma^{(m+1)(2d)^m}s^{m}\mu+\left(s_{-}^{m-1}\mu_{-}\right)^{\frac{1}{L}}s_+^4 \\
&\leq c\left(s_{-}^{m-1}\mu_{-}\right)^\frac{1}{L}.
%&=|h(y+\xi_+-\xi)-h(y)|_{D(s_+)}+|[R](y,0)-[R](0,0)|_{D(s_+)}+|M(y,0)|_{D(s_+)}\\
%&~~~~+|\bar{g}(y^\imath(\zeta_+-\zeta)^\jmath)|_{D(s_+)}+|[R](\xi_+-\xi,\zeta_+-\zeta)|_{D(s_+)}+|g(\zeta_+-\zeta)|_{D(s_+)}\\
%%&\leq \gamma^{(m+1)(2d)^m}s^{m-4}\mu s_+^4+(s^{m-1}\mu)^{\frac{1}{L}}s_+^4 \\ %&c(s^{m-2}\mu)^{\frac{1}{L'}}+\gamma_0^{(2d)^m}s^m\mu+cs_+^2(s^{m-1}\mu)^{\frac{1}{L}}+c(s^{m-1}\mu)^{3\frac{1}{L}}\\
%&\leq c_4(\alpha s)^2,
\end{align*}
%where the last equality follows from \textbf{(H3)}.

It follows from (\ref{g+}) that
\begin{align*}
\left|\partial_\xi^\ell(g_+(z)-g(z))\right|_{D(s_+)}&=|\partial_\xi^\ell(g(z+\zeta_+-\zeta)-g(\zeta_+-\zeta)-g(z))|\\
&~~~~+\left|\partial_\xi^\ell([R](0,z+\zeta_+-\zeta)-[R](0,\zeta_+-\zeta))\right|\\
&\leq c\left(s_{-}^{m-1}\mu_{-}\right)^\frac{1}{L}s_+^2+\gamma^{(m+1)(2d)^m}s^{m}\mu\\
&\leq c\left(s_{-}^{m-1}\mu_{-}\right)^\frac{1}{L}.
%&|g(z+\zeta_+-\zeta)-g(z)-g(\zeta_+-\zeta)|_{D(s_+)}+|[R](0,z)-[R](0,0)|_{D(s_+)}\\
%&~~~~+|M(0,z)|_{D(s_+)}+|\bar{g}((\xi_+-\xi)^\imath z^\jmath)|_{D(s_+)}\\
%%&\leq c(s^{m-1}\mu)^\frac{1}{L}s_+^2+\gamma^{(m+1)(2d)^m}s^{m-2}\mu s_+^2\\
%&\leq c_4(\alpha s)^2,
\end{align*}
%where the last equality follows from \textbf{(H2)}.

According to (\ref{barg}), we obtain that
\begin{align*}
\left|\partial_\xi^\ell(\bar{g}_+(y^\imath z^\jmath)-\bar{g}(y^\imath z^\jmath))\right|_{D(s_+)}\leq\left|\partial_\xi^\ell[R](y^{\imath}z^{\jmath})\right|\leq c_4\gamma^{(m+1)(2d)^m}s^{m}\mu \leq c\left(s_{-}^{m-1}\mu_{-}\right)^\frac{1}{L}.
\end{align*}

This completes the proof.
\end{proof}
\subsubsection{Estimate on $\Phi_+$}
Recall that $F$ is as in (\ref{eq3}) with the coefficients and its estimate is given by Lemma \ref{le2}. Then, we have the following estimate on $F$.
\begin{lemma}\label{le4}
There is a constant $c_4$ such that for all $|\ell|+|l|+|i|+|j|\leq m$,
\begin{align}\label{eF}
&\left|\partial_\xi^\ell\partial_x^l\partial_y^i\partial_z^jF\right|_{\hat D(s)\times G_+}\leq \left\{\begin{array}{l}
                             c_4s^{m-2|i|-|j|}\mu\Gamma(r-r_+),~~~~~~2|i|+|j|\leq m,\\
                             c_4\mu\Gamma(r-r_+),\quad\quad\quad\quad\quad m\leq2|i|+|j|.%|i|+|j|\leq m-|\ell|-|l|,
                           \end{array}\right.
\end{align}
%and
%\begin{align}\label{eFb}                                                   |\partial_\xi^\ell\partial_x^l\partial_y^i\partial_z^jF|_{\hat D(\beta)\times G_+}\leq c_4\mu\Gamma(r-r_+).
%\end{align}
\end{lemma}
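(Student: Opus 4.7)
\textbf{Proof proposal for Lemma \ref{le4}.} My plan is to differentiate the Fourier--Taylor series (\ref{eq3}) termwise, apply the coefficient bound from Lemma \ref{le2}, and exploit that the powers of $s$ arising from $y^{\imath}z^{\jmath}$ and the coefficient estimate miraculously combine to an expression depending only on $(i,j)$.

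First, using the Leibniz rule on each summand $F_{k\imath\jmath}(\xi,y,z)\,y^{\imath}z^{\jmath}\,e^{\sqrt{-1}\langle k,x\rangle}$, I write
\[
\partial_\xi^\ell\partial_x^l\partial_y^i\partial_z^j\bigl(F_{k\imath\jmath}y^{\imath}z^{\jmath}e^{\sqrt{-1}\langle k,x\rangle}\bigr)
=(\sqrt{-1})^{|l|}k^{l}e^{\sqrt{-1}\langle k,x\rangle}\!\!\sum_{\substack{i_1+i_2=i\\ j_1+j_2=j}}\!\binom{i}{i_1}\binom{j}{j_1}\bigl(\partial_\xi^\ell\partial_y^{i_1}\partial_z^{j_1}F_{k\imath\jmath}\bigr)\,\partial_y^{i_2}\partial_z^{j_2}(y^{\imath}z^{\jmath}),
\]
where $\partial_y^{i_2}\partial_z^{j_2}(y^{\imath}z^{\jmath})=\tfrac{\imath!}{(\imath-i_2)!}\tfrac{\jmath!}{(\jmath-j_2)!}y^{\imath-i_2}z^{\jmath-j_2}$, nonzero only when $i_2\leq\imath$ and $j_2\leq\jmath$ componentwise (so in particular $2|i_2|+|j_2|\leq 2|\imath|+|\jmath|\leq m$). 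Since $|\ell|+|i_1|+|j_1|\leq|\ell|+|l|+|i|+|j|\leq m$, Lemma \ref{le2} applies and yields
\[
\bigl|\partial_\xi^\ell\partial_y^{i_1}\partial_z^{j_1}F_{k\imath\jmath}\bigr|_{D(s)\times G_+}\leq c\,|k|^{|\ell|+|i_1|+|j_1|+(|\ell|+|i_1|+|j_1|+1)(2d)^{\jmath}\tau}\,s^{m-2|\imath|-|\jmath|}\mu\,e^{-|k|r}.
\]

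Next, on $\hat D(s)$ I bound $|y^{\imath-i_2}z^{\jmath-j_2}|\leq s^{2(|\imath|-|i_2|)+|\jmath|-|j_2|}$ and $|e^{\sqrt{-1}\langle k,x\rangle}|\leq e^{|k|(r_+ +\frac{7}{8}(r-r_+))}$. The crucial cancellation is
\[
s^{m-2|\imath|-|\jmath|}\cdot s^{2(|\imath|-|i_2|)+|\jmath|-|j_2|}=s^{m-2|i_2|-|j_2|},
\]
which is independent of $(\imath,\jmath)$. Similarly $e^{-|k|r}\cdot e^{|k|(r_+ +\frac{7}{8}(r-r_+))}=e^{-|k|(r-r_+)/8}$. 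The $|k|$-exponent (including the factor $|k|^{|l|}$ from $\partial_x^l$) is bounded by $|l|+|\ell|+|i|+|j|+(|\ell|+|i|+|j|+1)(2d)^m\tau\leq m+(m+1)(2d)^m\tau$, exactly matching $\Gamma(r-r_+)$.

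Finally, I split into the two cases of the lemma. Since $|i_2|\leq|i|$, $|j_2|\leq|j|$, one always has $m-2|i_2|-|j_2|\geq 0$. When $2|i|+|j|\leq m$, then $m-2|i_2|-|j_2|\geq m-2|i|-|j|\geq 0$, so $s^{m-2|i_2|-|j_2|}\leq s^{m-2|i|-|j|}$ (as $s<1$), giving $c_4 s^{m-2|i|-|j|}\mu\Gamma(r-r_+)$. When $2|i|+|j|\geq m$, the bound $s^{m-2|i_2|-|j_2|}\leq 1$ produces $c_4\mu\Gamma(r-r_+)$. Summing over the finitely many multi-indices $(\imath,\jmath)$ with $2|\imath|+|\jmath|\leq m$ and over $0<|k|\leq K_+$ absorbs both the combinatorial factor and the $k$-sum into $\Gamma(r-r_+)$ and a constant, which completes the proof. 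The bookkeeping around the bi-index Leibniz expansion and verifying that the cancellation in $s$-powers is uniform across all partitions $(i_1,i_2)$, $(j_1,j_2)$ is the only mildly delicate point; everything else is routine termwise majorization.
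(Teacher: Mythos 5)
Your proposal is correct and follows essentially the same route as the paper: termwise differentiation of the Fourier--Taylor series (\ref{eq3}), the coefficient bound of Lemma \ref{le2}, the cancellation of the $s$-powers coming from the monomials $y^{\imath}z^{\jmath}$ on $\hat D(s)$ against $s^{m-2|\imath|-|\jmath|}$, and absorption of the $|k|$-powers and the factor $e^{-|k|(r-r_+)/8}$ into $\Gamma(r-r_+)$. The paper compresses all of this into one display (writing the two cases via a piecewise exponent $a(i,j)$), whereas you spell out the Leibniz expansion and the case analysis explicitly; the substance is the same.
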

\begin{proof}
Let $$a(i,j)=\left\{\begin{array}{l}
                      s^{m-2|i|-|j|},\quad if~~2|i|+|j|\leq m, \\
                      0,\quad\quad\quad~~~  otherwise.
                    \end{array}\right.$$
By (\ref{eq3}) and Lemma \ref{le2}, we have
\begin{align*}
\left|\partial_\xi^\ell\partial_x^l\partial_y^i\partial_z^jF\right|_{\hat D(s)\times G_+}&\leq \sum_{0<|k|\leq K_+,2|\imath|+|\jmath|\leq m}|k|^l\left|\partial_\xi^\ell\partial_y^i\partial_z^j(F_{k\imath\jmath}y^\imath z^\jmath)\right|e^{|k|(r_++\frac{7}{8}(r-r_+))}\\
&\leq \sum_{0<|k|\leq K_+} c|k|^{|l|+|\ell|+|i|+|j|+(|\ell|+|i|+|j|+1)(2d)^\jmath\tau}s^{a(i,j)}\mu e^{-|k|\frac{r-r_+}{8}}\\
%&\leq c\sum_{0<|k|\leq K_+}|k|^{\tau+|l|}\gamma_0^{n+m+1}s^{m-2|i|-|j|}\mu e^{-|k|\frac{r-r_+}{8}}\\
&\leq cs^{a(i,j)}\mu\Gamma(r-r_+).
\end{align*}
This proves (\ref{eF}).
%{\color{red}The proof of (\ref{eFb}) is similar.}

This completes the proof.
\end{proof}
{To obtain the symplectic transformation stated in Theorem \ref{th1}, we need to extend the function $F$ smoothly to the domain $\hat D(\beta_0)\times G_0$.
\begin{lemma}\label{ext}
Assume \textsc{(H2)}-\textsc{(H3)}. Then $F$ and $\zeta_+-\zeta$ can be smoothly extended to functions of H\"{o}lder class $C^{m-1+\varrho,m-1+\varrho}(\hat D(\beta_0)\times G_0)$ and $C^{m-1+\varrho}$ respectively, where $0<\varrho<1$ is fixed. Moreover, there is a constant $c$ such that
\begin{align}\label{Fext}
\|F\|_{C^{m-1+\varrho,m-1+\varrho}(\hat D(\beta_0)\times G_0)}\leq c\mu\Gamma(r-r_+),\\\label{zetaext}
\|\zeta_+-\zeta\|_{C^{m-1+\varrho}(G_0)}\leq c(s_{-}^{m-1}\mu_{-})^\frac{1}{L},
\end{align}
where $\|F\|_{C^{m-1+\varrho,m-1+\varrho}(\hat D(\beta_0)\times G_0)}=:\left|\partial_\xi^\ell\partial_x^l\partial_y^i\partial_z^j F\right|_{\hat D(\beta_0)\times G_0}$, $|\ell|\leq m-1+\varrho$, $|l|+|i|+|j|\leq m-1+\varrho$.
\end{lemma}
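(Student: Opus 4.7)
The plan is to apply the Whitney extension theorem in the parameter variable $\xi$, treating the spatial variables $(x,y,z)$ as auxiliary parameters in which the functions are already analytic on a complex neighborhood. The parameter set $G_+\subset G_0$ is defined through small-divisor inequalities and is merely closed (possibly Cantor-like), so a direct analytic extension in $\xi$ is impossible; Whitney extension is the appropriate tool and it preserves H\"older norms up to a universal constant depending only on $m$, $\varrho$, and the diameter of $G_0$.

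First, I would set up the Whitney jet for $F$ on $G_+$. By Lemma \ref{le4}, for every $|\ell|+|l|+|i|+|j|\leq m$ and every $\xi\in G_+$, the function $\partial_\xi^\ell\partial_x^l\partial_y^i\partial_z^j F(\cdot,\xi)$ is analytic on $\hat D(s)$ with the uniform bound $c_4\mu\Gamma(r-r_+)$ (absorbing the factor $s^{m-2|i|-|j|}\leq 1$ into the constant, since $s<1$). These bounds, together with the mean-value theorem applied to $\partial_\xi^\ell F$ between two points of $G_+$, verify the Whitney compatibility conditions of order $m-1+\varrho$ in $\xi$ with coefficients bounded by $c\mu\Gamma(r-r_+)$. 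Applying Whitney's extension theorem pointwise in $(x,y,z)\in \hat D(s)$ then produces an extension $\tilde F\in C^{m-1+\varrho}(G_0)$ jointly with the analytic dependence on $(x,y,z)$ preserved. To enlarge the spatial domain from $\hat D(s)$ to $\hat D(\beta_0)$, I would multiply by a smooth cutoff $\chi(y,z)$ equal to $1$ on $\hat D(s)$ and supported in $\hat D(\beta_0)$; since $s_0\leq \beta_0$, such a cutoff exists and its $C^{m-1+\varrho}$ norm is bounded in terms of $s_0$ alone, so the product satisfies (\ref{Fext}) with a new constant $c$ depending on the cutoff.

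For $\zeta_+-\zeta$, the argument is simpler since there is no spatial variable. The bound $|\partial_\xi^\ell(\zeta_+-\zeta)|_{G_+}\leq c_3(s_-^{m-1}\mu_-)^{1/L}$ from Lemma \ref{le5} (combined with the same bound on $|\ell|$-th divided differences obtained by iterating the mean-value theorem on the implicit relation (\ref{eq47}) used in Lemma \ref{le5}) yields a Whitney jet on $G_+$ of order $m-1+\varrho$ with all coefficients dominated by $c(s_-^{m-1}\mu_-)^{1/L}$. A direct application of the Whitney extension theorem produces the desired $C^{m-1+\varrho}(G_0)$ extension satisfying (\ref{zetaext}).

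The main technical obstacle is the verification of the H\"older-type compatibility conditions for the Whitney jet at points of $G_+$ lying near excluded resonant regions, since there the control of $\partial_\xi^\ell F$ comes from Lemma \ref{le2} and degrades like $|k|^{\#}/\gamma^{\#}$. The way around this is that these $|k|$-dependent factors are already absorbed into the function $\Gamma(r-r_+)$ and into Lemma \ref{le4}, so the jet bounds hold uniformly on $G_+$ and no further loss is incurred; consequently Whitney extension applies cleanly and the stated bounds follow.
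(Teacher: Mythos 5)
Your proposal follows essentially the same route as the paper: the paper's proof simply invokes the standard Whitney extension theorem to pass from $C^{m,m}$ bounds on $\hat D(s)\times G_+$ (from Lemma \ref{le4}) and $C^{m}$ bounds on $G_+$ (from (\ref{eq23}) in Lemma \ref{le5}) to $C^{m-1+\varrho}$ extensions on $\hat D(\beta_0)\times G_0$ and $G_0$ with the stated bounds, exactly as you do. Your extra implementation details (Whitney jets in $\xi$ applied pointwise via the linear extension operator, and a cutoff to enlarge the spatial domain) are consistent refinements of the same argument rather than a different approach.
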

\begin{proof}
It follows from the standard Whitney  extension theorem that $F$ and $\zeta_+-\zeta$ can be extended smoothly to functions $\tilde F$ and $\tilde{(\zeta_+-\zeta)}$ of H\"{o}lder class $C^{m-1+\varrho,m-1+\varrho}(\hat D(\beta_0\times G_0))$, $C^{m-1+\varrho}(G_0)$, respectively, such that
\begin{align*}
\|\tilde F\|_C^{m-1+\varrho,m-1+\varrho}(\hat D(\beta_0\times G_0))\leq c\|F\|_{C^{m,m}(\hat D(s)\times G_+)},\\
\|\tilde{\zeta_+-\zeta}\|_{C^{m-1+\varrho}(G_0)}\leq c\|\zeta_+-\zeta\|_{C^m(G_+)},
\end{align*}
where $c$ is a constant depending only on $m$, $\varrho$ and the dimensions $n, d$. Then this together with (\ref{eF}) in Lemma \ref{le4} and (\ref{eq23}) in Lemma \ref{le5} yield (\ref{Fext}) and (\ref{zetaext}).

This completes the proof.
\end{proof}
}
\begin{lemma}\label{le7}
In addition to \textsc{(H2)}-\textsc{(H3)}, assume that
\begin{align*}
&{\textsc{(H4)}}:c_3\left(s_{-}^{m-1}\mu_{-}\right)^{\frac{1}{L}}<\frac{1}{8}\alpha s,\\
&{\textsc{(H5)}}:c_4\mu\Gamma(r-r_+)<\frac{1}{4}(r-r_+),\\
&{\textsc{(H6)}}:c_4s^{m-1}\mu\Gamma(r-r_+)<\frac{1}{8}\alpha s,\\
&{\textsc{(H7)}}:c_3\mu\Gamma(r-r_+)+c_3\left(s^{m-1}\mu\right)^{\frac{1}{L}}<\beta-\beta_+.\\
%&\textbf{\textsc{(H7)}}:c_3(s^{m-2}\mu)^{\frac{1}{L'}}<(\frac{1}{8}\alpha s)^2.
\end{align*}
%\textbf{\textsc{(H6)}} ~c_3s^{m-1}\mu<\frac{1}{8}\alpha s.\\$
Then the following conclusions hold:
\begin{itemize}
\item[(1)]For all $0\leq t\leq 1$,
\begin{align}\label{eq26}
\phi_F^t&:D_{\frac{1}{4}\alpha}\rightarrow D_{\frac{1}{2}\alpha},\\\label{eq27}
\phi&:D_{\frac{1}{8}\alpha}\rightarrow D_{\frac{1}{4}\alpha},
\end{align}
are well defined.
\item[(2)] Let $\Phi_+=\phi_F^1\circ\phi$. Then for all $\xi\in G_+$,
    \begin{align}
    \Phi_+:
    \begin{array}{l}
    D_+\rightarrow D,\\
    \tilde D_+\rightarrow D(\beta,r).
    \end{array}
    \end{align}
\item[(3)]There is a constant $c_5$ such that
\begin{align*}
\left|\partial_\xi^\ell D_{(x,y,z)}^i(\phi_F^t-id)\right|_{{D}_{\frac{\alpha}{4}}\times G_+}&\leq c_5\mu\Gamma(r-r_+),~~~|\ell|+|i|\leq m,
%\\
%|D\phi_F^t-Id|_{\tilde{D}\times G_+}&\leq c_5\mu\Gamma(r-r_+),\\
%|D^i\phi_F^t|_{\tilde{D}\times G_+}&\leq c_5\mu\Gamma(r-r_+),~~2\leq|i|\leq m.
\end{align*}
where $D_{(x,y,z)}^iF$ stands for the $|i|$-order partial derivative with respect to $(x,y,z)$.
\item[(4)]%There is a constant $c_6$ such that
\begin{align*}
\left|\partial_\xi^\ell D_{(x,y,z)}^i(\Phi_+-id)\right|_{\tilde{D}\times G_+}&\leq c_5\mu\Gamma(r-r_+),~~~|\ell|+|i|\leq m.
%\\
%|D\Phi_+-Id|_{\tilde{D}\times G_+}&\leq c_5\mu\Gamma(r-r_+),\\
%|D^i\Phi_+|_{\tilde{D}\times G_+}&\leq c_5\mu\Gamma(r-r_+),~~2\leq|i|\leq m.
\end{align*}
\end{itemize}
\end{lemma}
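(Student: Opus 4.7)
The strategy is to treat the Hamiltonian flow $\phi_F^1$ and the translation $\phi$ separately, then combine them to bound $\Phi_+=\phi_F^1\circ\phi$, relying throughout on the vector-field estimates from Lemma~\ref{le4} and the translation bound (\ref{eq23}) from Lemma~\ref{le5}.

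For claim~(1), I would start from the flow equations
\begin{equation*}
\dot x=\partial_yF,\qquad \dot y=-\partial_xF,\qquad \dot z=J\partial_zF,
\end{equation*}
and, for any initial point in $D_{\alpha/4}$, integrate over $t\in[0,1]$. Applying Lemma~\ref{le4} with $(l,i,j)=(1,0,0),(0,1,0),(0,0,1)$ respectively gives $|\partial_xF|\le c_4s^m\mu\Gamma(r-r_+)$, $|\partial_yF|\le c_4s^{m-2}\mu\Gamma(r-r_+)$ and $|\partial_zF|\le c_4s^{m-1}\mu\Gamma(r-r_+)$ on $\hat D(s)$. Integrating, the $x$-displacement is absorbed by $\tfrac14(r-r_+)$ by (H5), while the $z$- and $y$-displacements are absorbed by $\tfrac18\alpha s$ and $(\tfrac12\alpha s)^2-(\tfrac14\alpha s)^2$ thanks to (H6). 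A standard continuity/bootstrap argument then shows the flow never leaves $D_{\alpha/2}$, so $\phi_F^t$ is well defined on $D_{\alpha/4}$ for every $t\in[0,1]$. For the translation $\phi$, Lemma~\ref{le3} combined with (\ref{eq23}) gives $|\zeta_+-\zeta|_{G_+}\le c_3(s_-^{m-1}\mu_-)^{1/L}$, and (H4) immediately forces this shift to be smaller than $\tfrac18\alpha s$, so $\phi:D_{\alpha/8}\to D_{\alpha/4}$.

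For claim~(2), the first inclusion is a direct composition: $D_+=D_{\alpha/8}\xrightarrow{\phi}D_{\alpha/4}\xrightarrow{\phi_F^1}D_{\alpha/2}\subset D$. For the second, I would repeat the flow estimate on $\tilde D$: the total displacement in the normal directions is bounded by the sum of the flow contribution $c_4\mu\Gamma(r-r_+)$ (up to factors of $s^{m-1}$) and the translation shift $c_3(s^{m-1}\mu)^{1/L}$, which is exactly the combination controlled by (H7), so $\Phi_+$ does not exit the $\beta$-slab of $D(\beta,r)$, while the $x$-direction is again absorbed by (H5). For claims~(3) and (4), I would differentiate the flow equations to obtain linear variational systems whose inhomogeneous terms involve $D^{i+1}F$; Gronwall's inequality combined with Lemma~\ref{le4} gives the zeroth-order bound $|\phi_F^t-id|\le c\mu\Gamma(r-r_+)$ and, inductively, similar bounds for each higher $D^i$ with $|i|\le m$, using a Cauchy loss from $\hat D(s)$ to $D_{\alpha/4}$. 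Parameter derivatives $\partial_\xi^\ell$ are handled in the same way via the extension in Lemma~\ref{ext}. Finally, the chain rule applied to $\Phi_+=\phi_F^1\circ\phi$, together with the $C^{m-1+\varrho}$ bound (\ref{zetaext}) on the translation, transfers the estimates to $\tilde D$ and produces (4). The main technical obstacle is controlling the combinatorics of the Fa\`a~di~Bruno formula when iterating derivatives of the flow and the composition, but since $m$ is fixed from the start, all combinatorial coefficients can be absorbed into the single constant $c_5$.
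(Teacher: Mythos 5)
Your proposal matches the paper's proof in both structure and the ingredients used: the flow integral equation $\phi_F^t=\mathrm{id}+\int_0^t X_F\circ\phi_F^\lambda\,d\lambda$ together with the component-wise bounds from Lemma~\ref{le4} and a continuity/bootstrap argument for part~(1), hypotheses (H4)--(H7) to absorb the displacements, the extension Lemma~\ref{ext} for part~(2), Gronwall plus induction on derivative order for part~(3), and the decomposition $\Phi_+-\mathrm{id}=(\phi_F^1-\mathrm{id})\circ\phi+(0,0,\zeta_+-\zeta)^\top$ with the chain rule and (\ref{zetaext}) for part~(4). This is essentially the paper's argument, so no further commentary is needed.
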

\begin{proof}
(1)~~For $\forall (x,y,z)\in D_{\frac{1}{8}\alpha}$, we note by $\zeta_+\in B_{ (s_{-}^{m-1}\mu_{-})^{\frac{1}{L}}}(\zeta)$ and {(H4)} that
$$|z+\zeta_+-\zeta|<|z|+|\zeta_+-\zeta|<\frac{1}{8}\alpha s+c\left(s_{-}^{m-1}\mu_{-}\right)^{\frac{1}{L}}<\frac{1}{4}\alpha s,$$
which implies (\ref{eq27}).

To verify (\ref{eq26}), we denote $\phi_{F_1}^t$, $\phi_{F_2}^t$, $\phi_{F_3}^t$ as the components of $\phi_{F}^t$ in the $x$, $y$, $z$ coordinates, respectively. Let $X_F=\left(F_y,-F_x, \tilde J F_z\right)^\top$ be the vector field generated by $F$. Then
\begin{align}\label{eq28}
\phi_F^t=id+\int_0^tX_F\circ \phi_F^\lambda d\lambda,~~~~0\leq t\leq 1.
\end{align}
For any $(x,y,z)\in D_{\frac{1}{4}\alpha}$, let $t_*=\sup\left\{t\in[0,1]:\phi_F^t(x,y,z)\in D_\alpha\right\}$. Then for any $0\leq t\leq t_*$, in view of $(x,y,z)\in D_{\frac{1}{4}\alpha}$, (\ref{eF}) in Lemma \ref{le4}, {(H5)} and {(H6)},
\begin{align*}
\left|\phi_{F_1}^t(x,y,z)\right|_{D_{\frac{1}{4}\alpha}}&\leq|x|+\int_0^t\left|F_y\circ\phi_F^\lambda\right|_{D_{\frac{1}{4}\alpha}}d\lambda\\
&\leq r_++\frac{1}{8}(r-r_+)+c_4s^{m-2}\mu\Gamma(r-r_+)\\
&<r_++\frac{3}{8}(r-r_+),\\
\left|\phi_{F_2}^t(x,y,z)\right|_{D_{\frac{1}{4}\alpha}}&\leq|y|+\int_0^t\left|-F_x\circ\phi_F^\lambda\right|_{D_{\frac{1}{4}\alpha}}d\lambda\\
&\leq \frac{1}{4}\alpha s+c_4s^{m}\mu\Gamma(r-r_+)\\
&<\frac{3}{8}\alpha s,\\
\left|\phi_{F_3}^t(x,y,z)\right|_{D_{\frac{1}{4}\alpha}}&\leq|z|+\int_0^t\left|\tilde JF_z\circ\phi_F^\lambda\right|_{D_{\frac{1}{4}\alpha}}d\lambda\\
&\leq \frac{1}{4}\alpha s+c_4s^{m-1}\mu\Gamma(r-r_+)\\
&<\frac{3}{8}\alpha s.
\end{align*}
Thus, $\phi_F^t\in D_{\frac{1}{2}\alpha}\subset D_\alpha$, i.e. $t_*=1$ and (1) holds.

(2)~~Using \textsc{(H5)}, \textsc{(H7)}, (\ref{Fext}) and a similar argument as above, one sees that $\phi_F^t:\bar D_+\rightarrow D(\beta,r)$ is well defined for all $0\leq t\leq 1$, where $\bar D_+=D\left(r_++\frac{5}{8}(r-r_+),\beta_++c(s_{-}^{m-1}\mu_{-})^{\frac{1}{L}}\right)$. Thus, $\Phi_+:\tilde D_+\rightarrow D(\beta,r)$ is also well defined.

(3)~~{Note that
\begin{align}\label{xiXF}
\left|\partial_\xi^\ell X_F\right|_{\tilde D\times G_+}\leq c\left|\partial_\xi^\ell D_{(x,y,x)}F\right|_{\tilde D\times G_+}.
\end{align}
Using (\ref{eF}) in Lemma \ref{le4} and (\ref{eq28}), we immediately have
\begin{align*}
\left|\phi_F^t-id\right|_{{D}_{\frac{\alpha}{4}}\times G_+}\leq c\mu\Gamma(r-r_+).
\end{align*}
Differentiating (\ref{eq28}) yields
\begin{align*}
D_{(x,y,z)}\phi_F^t&=I_{2n+2d}+\int_0^t\left(D_{(x,y,z)}X_F\right)D_{(x,y,z)}\phi_F^\lambda d\lambda\\
&=I_{2n+2d}+\int_0^tJ\left(D_{(x,y,z)}^2F\right)D_{(x,y,z)}\phi_F^\lambda d\lambda,~~~~0\leq t\leq 1.
\end{align*}
It follows from Lemma \ref{le4}, (\ref{eq28}) and Gronwall Inequality that
%on ${{D}_{\frac{\alpha}{4}}\times G_+}$
\begin{align*}
\left|D_{(x,y,z)}\phi_F^t-I_{2n+2d}\right|_{{D}_{\frac{\alpha}{4}}\times G_+}&\leq\left|\int_0^tD_{(x,y,z)}X_F\circ \phi_F^\lambda D_{(x,y,z)}\phi_F^\lambda d\lambda\right|_{D_{\frac{\alpha}{4}}\times G_+}\\
&\leq\int_0^t\left|D_{(x,y,z)}X_F\circ \phi_F^\lambda\right|\left|D_{(x,y,z)}\phi_F^\lambda-I_{2n+2d}\right|_{\hat D(s)\times G_+}d\lambda\\
&\quad+\int_0^t\left|D_{(x,y,z)}X_F\circ \phi_F^\lambda\right|_{\hat D(s)\times G_+}d\lambda\\
&\leq c\mu\Gamma(r-r_+).
\end{align*}
Similarly, by using (\ref{eF}) in Lemma \ref{le4}, Gronwall's Inequality and (\ref{xiXF}) inductively, we have
\begin{align*}
\left|\partial_\xi^\ell D_{(x,y,z)}^i\phi_F^t\right|_{D_{\frac{\alpha}{4}}\times G_+}\leq c\mu\Gamma(r-r_+),~~~|\ell|+|i|\leq m.
\end{align*}

(4) Observe that
\begin{align*}
\Phi_+-id=(\phi_F^1-id)\circ\phi+\left(\begin{array}{c}
0\\
0\\
\zeta_+-\zeta\end{array}\right).
\end{align*}

Then,
\begin{align*}
\left|\partial_\xi^\ell D_{(x,y,z)}^i(\Phi_+-id)\right|_{\tilde{D}\times G_+}&\leq\left|\partial_\xi^\ell D_{(x,y,z)}^i(\phi_F^1-id)\right|_{\tilde{D}\times G_+}\left|\partial_\xi^\ell D_{(x,y,z)}^i\phi\right|_{\tilde{D}\times G_+}\\
&\quad+\left|\partial_\xi^\ell(\zeta_+-\zeta)\right|_{\tilde{D}\times G_+}\\
&\leq c\mu\Gamma(r-r_+)\left(s_{-}^{m-1}\mu_{-}\right)^{\frac{1}{L}}+\left(s_{-}^{m-1}\mu_{-}\right)^{\frac{1}{L}}\\
&\leq c\mu\Gamma(r-r_+)\alpha s+\alpha s\\
&\leq c\mu\Gamma(r-r_+),
\end{align*}
where the second equality follows from (\ref{Fext}) and (\ref{zetaext}) in Lemma \ref{ext}, the third equality follows from  \textsc{(H4)}, and the last equality follows from the definition of $\mu$ and $s$.}

%(s^{m-1}\mu)^{\frac{1}{L}}<\frac{1}{8}\alpha s
%It follows from the definition of $\phi$, (\ref{eq23}) and \textbf{(H4)}
%$$|\partial_\xi^\ell\phi|\leq|\partial_\xi^\ell(\zeta_+-\zeta)|$$
%now follows from (3).

This completes the proof.
% and the identity
%\begin{align*}
%\Phi_+-id=(\phi_F^1-id)\circ\phi+\begin{pmatrix}0\\ y^*\end{pmatrix}.
%\end{align*}
\end{proof}

\subsubsection{Frequency property}\label{subsubF}
To estimate the new frequency, we first assume that
$${\textsc{(H8)}}: 3s K_+^{2\tau+1}\leq\left\{\frac{\gamma-\gamma_+}{\gamma_0},\frac{\gamma^2-\gamma_+^2}{\gamma_0^2}\right\}.$$
%{\color{blue}
It is obvious that for all $0<|k|\leq K_+$, $\xi\in G_+$,
\begin{align*}
\left|\left\langle k,\nabla_y[R](0,\zeta_+-\zeta)\right\rangle\right|&\leq cK_+\gamma^{(m+1)(2d)^m}s^{m-2}\mu,\\
%\leq \frac{\gamma-\gamma_+}{2K_+^\tau},\\
\left|\left\langle k,\nabla_y\bar g((\zeta_+-\zeta)^\jmath)\right\rangle\right|&\leq cK_+\gamma^{(m+1)(2d)^m}\left(s_{-}^{m-1}\mu_{-}\right)^{\frac{1}{L}}\leq cK_+\gamma^{(m+1)(2d)^m}\alpha s.
%\leq \frac{\gamma-\gamma_+}{2K_+^\tau}.
%%&|\tilde S_{\imath\jmath}^+|\leq c\gamma^{(m+1)(2d)^{\imath+\jmath}}s^{m-2}\mu\leq \frac{\gamma^{(2d)^{\imath+\jmath}}-\gamma_+^{(2d)^{\imath+\jmath}}}{K_+^{\tau(2d)^{\imath+\jmath}}},
\end{align*}
%%where $\tilde S_{\imath\jmath}^+$ is concerned with $\frac{\partial^2g_+(0)}{\partial z^2}.$
Then this together with (\ref{omega+}) and {(H8)} yield
\begin{align*}
|\langle k,\omega_+\rangle|&\geq|\langle k,\omega\rangle|-\left|\left\langle k,\nabla_y[R](0,\zeta_+-\zeta)+\nabla_y\bar g(0,\zeta_+-\zeta)\right\rangle\right|\\
&\geq\frac{\gamma}{|k|^\tau}-\frac{\gamma-\gamma_+}{|k|^\tau}=\frac{\gamma_+}{|k|^\tau}.
\end{align*}
%
%Recall that
%$$A_{\imath\jmath}^+=\sqrt{-1}\langle {k},\omega_+\rangle I_{(2d)^{\imath+\jmath}}+{\tilde{S}_{\imath\jmath}^+}
%=A_{\imath\jmath}+\sqrt{-1}\langle {k},\omega_+-\omega\rangle I_{(2d)^{\imath+\jmath}}+{\tilde{S}_{\imath\jmath}^+-\tilde{S}_{\imath\jmath}}.
%$$
%By (\ref{eq23}) in Lemma (\ref{le5}) and \textbf{(H8)}, we have
%\begin{align}\label{om+-om}
%|\sqrt{-1}\langle {k},\omega_+-\omega\rangle I_{(2d)^{\imath+\jmath}}|&\leq (s_{-}^{m-1}\mu_{-})^{\frac{{(2d)^{\imath+\jmath}}}{L}}\leq c(\alpha s)^{(2d)^{\imath+\jmath}}\leq\frac{\gamma^{(2d)^{\imath+\jmath}}-\gamma_+^{(2d)^{\imath+\jmath}}}{2K_+^{\tau(2d)^{\imath+\jmath}}}.
%\end{align}
%Note that $\tilde{S}_{\imath\jmath}^+-\tilde{S}_{\imath\jmath}$ is concerned with $\partial_z^2g_+(0)-\partial_z^2g(0)$, then, by (\ref{g+}), (\ref{R}) in Lemma \ref{le1} and $\textbf{(H8)}$
%\begin{align}\label{tS-S}
%{\tilde{S}_{\imath\jmath}^+-\tilde{S}_{\imath\jmath}}\leq c{\gamma^{(m+1)(2d)^{\imath+\jmath}}s^{m-2}\mu}\leq \frac{\gamma^{(2d)^{\imath+\jmath}}-\gamma_+^{(2d)^{\imath+\jmath}}}{2K_+^{\tau(2d)^{\imath+\jmath}}}.
%\end{align}
%Thus, by (\ref{om+-om}) and (\ref{tS-S}),
%\begin{align*}
%|\det A_{\imath\jmath}^+|&\geq|\det A_{\imath\jmath}|-|\det\langle {k},\omega_+-\omega\rangle I_{(2d)^{\imath+\jmath}}|-|\det{\tilde{S}_{\imath\jmath}^+-\tilde S_{\imath\jmath}}|\\
%&\geq \frac{\gamma^{(2d)^{\imath+\jmath}}}{|k|^{\tau (2d)^{\imath+\jmath}}}-\frac{\gamma^{(2d)^{\imath+\jmath}}-\gamma_+^{(2d)^{\imath+\jmath}}}{|k|^{\tau(2d)^{\imath+\jmath}}}\geq\frac{\gamma_+^{(2d)^{\imath+\jmath}}}{|k|^{\tau(2d)^{\imath+\jmath}}}.
%\end{align*}
%}
{Recall that
$$A_{\imath\jmath}^+=\sqrt{-1}\left\langle \frac{k}{|k|},\omega_+\right\rangle I_{(2d)^{\imath+\jmath}}+\frac{\tilde{S}_{\imath\jmath}^+}{|k|}
=A_{\imath\jmath}+\sqrt{-1}\left\langle \frac{k}{|k|},\omega_+-\omega\right\rangle I_{(2d)^{\imath+\jmath}}+\frac{\tilde{S}_{\imath\jmath}^+-\tilde{S}_{\imath\jmath}}{|k|}=:A_{\imath\jmath}+\tilde{A}_{\imath\jmath},$$
where
\begin{align*}
A_{\imath\jmath}&=\sqrt{-1}\left\langle \frac{k}{|k|},\omega\right\rangle I_{(2d)^{\imath+\jmath}}+\frac{\tilde{S}_{\imath\jmath}}{|k|},\\
\tilde{A}_{\imath\jmath}&=\sqrt{-1}\left\langle \frac{k}{|k|},\omega_+-\omega\right\rangle I_{(2d)^{\imath+\jmath}}+\frac{\tilde{S}_{\imath\jmath}^+-\tilde{S}_{\imath\jmath}}{|k|}.
\end{align*}
Then,
\begin{align}\label{tAA}
\overline{A_{\imath\jmath}^{+\top}}A_{\imath\jmath}^+=\overline{A_{\imath\jmath}^{\top}}A_{\imath\jmath}+\overline{A_{\imath\jmath}^{\top}}\tilde{A}_{\imath\jmath}+\overline{\tilde{A}_{\imath\jmath}^{\top}}A_{\imath\jmath}+\overline{\tilde{A}_{\imath\jmath}^{\top}}\tilde{A}_{\imath\jmath}.
\end{align}
%\begin{align*}
%\overline{A_{\imath\jmath}^{+\top}}A_{\imath\jmath}^+=\overline{A_{\imath\jmath}^{\top}}A_{\imath\jmath}+\overline{A_{\imath\jmath}^{\top}}(\sqrt{-1}\langle \frac{k}{|k|},\omega_+-\omega\rangle I_{(2d)^{\imath+\jmath}}+\frac{\tilde{S}_{\imath\jmath}^+-\tilde{S}_{\imath\jmath}}{|k|})+\overline{(\sqrt{-1}\langle \frac{k}{|k|},\omega_+-\omega\rangle I_{(2d)^{\imath+\jmath}}+\frac{\tilde{S}_{\imath\jmath}^+-\tilde{S}_{\imath\jmath}}{|k|})^{\top}}A_{\imath\jmath}+\overline{\tilde{A}_{\imath\jmath}^{\top}}\tilde{A}_{\imath\jmath}
%\end{align*}
By (\ref{eq23}) in Lemma (\ref{le5})
%and \textbf{(H8)}
, we have
\begin{align}\label{om+-om}
\left|\sqrt{-1}\left\langle \frac{k}{|k|},\omega_+-\omega\right\rangle\right|&\leq \left(s_{-}^{m-1}\mu_{-}\right)^{\frac{1}{L}}\leq c\alpha s.
%\leq\frac{\gamma^2-\gamma_+^2}{3K_+^{2\tau}}.
\end{align}
Note that $\tilde{S}_{\imath\jmath}^+-\tilde{S}_{\imath\jmath}$ is concerned with $\partial_z^2g_+(0)-\partial_z^2g(0)$, then, by (\ref{g+}), (\ref{R}) in Lemma \ref{le1},% and $\textbf{(H8)}$
\begin{align}\label{tS-S}
\frac{|\tilde{S}_{\imath\jmath}^+-\tilde{S}_{\imath\jmath}|}{|k|}\leq c{\gamma^{(m+1)(2d)^{\imath+\jmath}}s^{m-2}\mu}.
%\leq \frac{\gamma^2-\gamma_+^2}{3K_+^{2\tau}}.
\end{align}
Thus, by (\ref{tAA}), (\ref{om+-om}), (\ref{tS-S}) and {(H8)},
%\begin{align*}
%\overline{\tilde{A}_{\imath\jmath}^{\top}}\tilde{A}_{\imath\jmath}\leq(\frac{\gamma-\gamma_+}{K_+^\tau})^2I_{(2d)^{\imath+\jmath}}
%\end{align*}
\begin{align*}
\overline{A_{\imath\jmath}^{+\top}}A_{\imath\jmath}^+
%&\geq \overline{A_{\imath\jmath}^{\top}} A_{\imath\jmath}-\langle \frac{k}{|k|},\omega_+-\omega\rangle I_{(2d)^{\imath+\jmath}}-\frac{\tilde{S}_{\imath\jmath}^+-\tilde S_{\imath\jmath}}{|k|}\\
&\geq \frac{\gamma^2}{|k|^{2\tau}}I_{(2d)^{\imath+\jmath}}-\frac{\gamma^2-\gamma_+^2}{3|k|^{2\tau}}I_{(2d)^{\imath+\jmath}}-\frac{\gamma^2-\gamma_+^2}{3|k|^{2\tau}}I_{(2d)^{\imath+\jmath}}-\frac{\gamma^2-\gamma_+^2}{3|k|^{2\tau}}I_{(2d)^{\imath+\jmath}}\geq\frac{\gamma_+^2}{|k|^{2\tau}}I_{(2d)^{\imath+\jmath}}.
\end{align*}}

\subsubsection{Estimate on $P_+$}
In the following, we estimate the next step $P_+$.
\begin{lemma}\label{le8}
Denote
\begin{align*}
\Delta&=\alpha^{m+1}s^{m+1}\mu\left(s^{m-2}\mu\Gamma^2(r-r_+)+\Gamma(r-r_+)\right)+\gamma^{(m+1)(2d)^m}s^{2m-2}\mu^2\Gamma(r-r_+)\\
&\quad+\gamma^{(m+1)(2d)^m}s^{m+1}\mu c.
\end{align*}
Assume \textsc{(H0)}-\textsc{(H8)}. Then there is a constant $c_6$ such that for all $|\ell|\leq m$
\begin{equation}\label{eq29}
\left|\partial_\xi^\ell\bar P_+\right|_{D_+\times G_+}\leq c_6\Delta.
%c_6\gamma_0^{n+5}s^{m+1}\mu(\Gamma^2(r-r_+)+\Gamma(r-r_+)).
\end{equation}
Moreover, if
$${\textsc{(H9)}}:c_6\Delta\leq\gamma_+^{(m+1)(2d)^m}s_+^{m}\mu_+,$$
then
\begin{equation}\label{ep+}
\left|\partial_\xi^\ell P_+\right|_{D_+\times G_+}\leq \gamma_+^{(m+1)(2d)^m}s_+^{m}\mu_+.
\end{equation}
\end{lemma}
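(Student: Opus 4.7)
The plan is to split $\bar P_+$ into its three summands from (\ref{eq16}), estimate each on the shrunken domain $D_+$, and then transfer the bound to $P_+=\bar P_+\circ\phi$, since $\phi$ is a translation controlled by (H4) and Lemma \ref{le7}(4). Throughout, $\xi$-derivatives up to order $m$ are handled by Leibniz and Fa\`a di Bruno applied to the compositions with $\phi_F^t$; the smallness of $\partial_\xi^\ell D_{(x,y,z)}^i(\phi_F^t-id)$ from Lemma \ref{le7}(3)--(4) together with (H5) ensures the compositional factors are $O(1)$, so I will focus on the leading-order bounds below.

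For the first summand $(P-R)\circ\phi_F^1$, Lemma \ref{le7}(1) guarantees $\phi_F^1$ maps $D_{\alpha/4}\supset D_+$ into $D_{\alpha/2}\subset D_\alpha$, so the composition is legal, and Lemma \ref{le1} yields $c_1\gamma^{(m+1)(2d)^m}s^{m+1}\mu$, which accounts for the last term in $\Delta$. For the integral $\int_0^1\{R_t,F\}\circ\phi_F^t\, dt$, I would bound the three pieces of $R_t=(1-t)Q+(1-t)[R]+tR$ on the intermediate domain $\tilde D$ and combine with the Cauchy estimates on $F$ from Lemma \ref{le4}. The bracket $\{R,F\}$ costs one Cauchy derivative on $R$ (loss $\sim 1/(\alpha s)$) against derivatives of $F$ carrying $\mu\Gamma(r-r_+)$; because $R$ is truncated at $2|\imath|+|\jmath|\leq m$, Taylor vanishing in $(y,z)$ yields an additional $(\alpha s)^{m+1}$ factor when restricting to $D_+$, producing the contribution $\alpha^{m+1}s^{m+1}\mu\Gamma(r-r_+)$ in $\Delta$. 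The brackets $\{[R],F\}$ and $\{Q,F\}$ each involve two $F$-related quantities and therefore inherit two small-divisor factors, contributing the $\Gamma^2$ term $\alpha^{m+1}s^{2m-1}\mu^2\Gamma^2(r-r_+)$.

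The correction $Q=(\partial_z g+\partial_z\bar g)J\partial_z F\big|_{2|\imath|+|\jmath|>m}$ is of high order in $(y,z)$; using the second branch of (\ref{eF}) (where $F$ is controlled by $c_4\mu\Gamma(r-r_+)$ when $2|i|+|j|\geq m$) together with Cauchy estimates of $\partial_z g$ and $\partial_z\bar g$ on $D_\alpha$, I obtain a contribution of order $\gamma^{(m+1)(2d)^m}s^{2m-2}\mu^2\Gamma(r-r_+)$. Summing the three contributions gives $|\partial_\xi^\ell\bar P_+|_{D_+\times G_+}\leq c_6\Delta$. Since $\phi$ is merely a translation by $\zeta_+-\zeta$ (bounded by (H4)) mapping $D_+\to D_{\alpha/4}$, composing preserves the estimate and yields (\ref{eq29}). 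Hypothesis (H9) then converts the bound directly into (\ref{ep+}).

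The main obstacle is the careful tracking of powers of $\alpha$, $s$, and $\Gamma$ in the Poisson-bracket step: one must simultaneously exploit the vanishing of $R$ to weight $m+1$ in $(y,z)$ to harvest the $(\alpha s)^{m+1}$ gain, absorb the Cauchy loss $1/(\alpha s)$ from differentiating, and account for the small-divisor cost $\Gamma(r-r_+)$ coming from $F$. A secondary technical subtlety is the handling of the $Q$-correction, which requires noting that the index range $2|\imath|+|\jmath|>m$ forces $F$ into the second (weight-free) regime of Lemma \ref{le4}, while the factors $\partial_z g$ and $\partial_z\bar g$ remain of size $O(s)$ near the origin thanks to (\ref{eq62}) and the structure of $\bar g$. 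Once these balances are in place, the remaining work is routine Leibniz and chain-rule bookkeeping controlled by the estimates already established in Lemmas \ref{le1}, \ref{le4}, \ref{le5}, and \ref{le7}.
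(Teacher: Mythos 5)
Your decomposition of $\bar P_+$ into the three summands of (\ref{eq16}) matches the paper, and the quantities you write down sum to the paper's $\Delta$. However, your attribution of the individual terms — and the reasoning you give for each — does not hold up, so the proof as written contains genuine gaps even though the final total looks right.

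First, you claim that $\{R,F\}$ contributes $\alpha^{m+1}s^{m+1}\mu\Gamma(r-r_+)$ because ``$R$ is truncated at $2|\imath|+|\jmath|\leq m$, Taylor vanishing in $(y,z)$ yields an additional $(\alpha s)^{m+1}$ factor.'' This is backwards: $R$ is the truncation of $P$ \emph{containing} all Fourier--Taylor monomials of weighted degree up to $m$, including constants ($p_{k00}$) and linear terms; it does not vanish at $(y,z)=(0,0)$, and there is no $(\alpha s)^{m+1}$ gain to harvest from it. (It is $P-R$ that vanishes to weighted order $m+1$, which is already how the last term of $\Delta$ arises.) The $(\alpha s)^{m+1}\mu\Gamma$ piece in $\Delta$ in fact comes from $Q$ itself: by definition (\ref{Q}), $Q$ is exactly the weighted-degree-$>m$ part of $(\partial_z g+\partial_z\bar g)J\partial_zF$, hence vanishes to order $m+1$ in $(y,z)$, and since it is linear in $F$ it carries precisely one factor of $\mu\Gamma$. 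This is the estimate (\ref{eQ}) in the paper, $|\partial_\xi^\ell Q|\leq c(\alpha s)^{m+1}\mu\Gamma(r-r_+)$. Your proposed bound for $Q$, namely $\gamma^{(m+1)(2d)^m}s^{2m-2}\mu^2\Gamma$, cannot be right: the $\mu^2$ would require two $F$-type factors, but $Q$ involves only one.

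Second, you group $\{[R],F\}$ with $\{Q,F\}$ and assert both ``involve two $F$-related quantities,'' hence inherit $\Gamma^2$. This fails for $\{[R],F\}$: $[R]$ is the angular average of the truncation of $P$, which is independent of $F$, so $\{[R],F\}$ carries only a single $\Gamma$. In the paper, $\{[R],F\}$ and $\{R,F\}$ are estimated together (\ref{eRF}) and produce the term $\gamma^{(m+1)(2d)^m}s^{2m-2}\mu^2\Gamma(r-r_+)$ of $\Delta$, with the $\mu^2$ coming from $|R|\sim\gamma^{(m+1)(2d)^m}s^m\mu$ times $|DF|\sim\mu\Gamma$. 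The $\Gamma^2$ term $\alpha^{m+1}s^{2m-1}\mu^2\Gamma^2$ is entirely due to $\{Q,F\}$ (estimate (\ref{eQF})), where both arguments of the bracket are $F$-related. In short: the paper's budget is $(P-R)\circ\phi_F^1\mapsto\gamma^{(m+1)(2d)^m}s^{m+1}\mu$, $Q\mapsto(\alpha s)^{m+1}\mu\Gamma$, $\{Q,F\}\mapsto\alpha^{m+1}s^{2m-1}\mu^2\Gamma^2$, $\{[R],F\}+\{R,F\}\mapsto\gamma^{(m+1)(2d)^m}s^{2m-2}\mu^2\Gamma$, and you have essentially swapped the second with the fourth and misassigned the third. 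The terms you list happen to coincide with the components of $\Delta$, but the justifications for where each one comes from are incorrect, which is the core of the lemma.
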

\begin{proof}
Recall the definition of $Q$ as in (\ref{Q}). %We have $$|Q|_{D_{\frac{1}{4}\alpha}}<c(\alpha s)^{m+1}.$$
%Notice that
%\begin{align*}
%&|\partial_z\bar{g}|_{D_{\frac{1}{4}\alpha}}<c(\alpha s)^2.
%\end{align*}
Observe by (\ref{barg}) and (\ref{eF}) that
%This together with (\ref{eF}) yields
\begin{align}\label{eQ}
\left|\partial_\xi^\ell Q\right|_{D_{\frac{1}{4}\alpha}\times G_+}%&\leq|\partial_z\bar{g}|_{D_{\frac{1}{4}\alpha}}\cdot|\nabla_zF|_{D_{\frac{1}{4}\alpha}}
\leq c(\alpha s)^{m+1}\mu\Gamma(r-r_+)
%&~~~~~\leq c(\alpha s)^2\gamma_0^{n+m+1}s^{m-1}\mu\Gamma(r-r_+)+c(\alpha s)^2\gamma_0^{n+m+1}s^{m}\mu\Gamma(r-r_+)+c\gamma_0^{(2d)^m}s^{m-2} s^{m}\mu^2\Gamma(r-r_+)\notag\\
%&~~~~~\leq c(\gamma_0^{-1}\alpha^2+\gamma_0^{-1}\alpha^2s+s^{m-3}\mu)\gamma_0^{(2d)^m}s^{m+1}\mu\Gamma(r-r_+)\\
%&~~~~~\leq cA\gamma_0^{(2d)^m}s^{m+1}\mu\Gamma(r-r_+),\notag
\end{align}
and
\begin{align}\label{eQF}
\left|\partial_\xi^\ell\{Q,F\}\right|_{D_{\frac{1}{4}\alpha}\times G_+}
%&\leq |\frac{\partial Q}{\partial x}||\frac{\partial F}{\partial y}|+|\frac{\partial Q}{\partial y}||\frac{\partial F}{\partial x}|+|\frac{\partial Q}{\partial z}||\frac{\partial F}{\partial z}|\notag\\
&\leq c\alpha^{m+1}s^{m+1}s^{m-2}\mu^2\Gamma^2(r-r_+)+c\alpha^{m+1}s^{m-1}s^m\mu^2\Gamma^2(r-r_+)\notag\\
&~~~+c\alpha^{m+1} s^ms^{m-1}\mu^2\Gamma^2(r-r_+)\notag\\
&\leq
c\alpha^{m+1} s^{2m-1}\mu^2\Gamma^2(r-r_+). %&c(\gamma_0^{-1}\alpha^2+\gamma_0^{-1}\alpha^2s+s^{m-3}\mu)\gamma_0^{(2d)^m}s^{2m-1}\mu^2\Gamma^2(r-r_+)\notag\\
%&\leq cA\gamma_0^{(2d)^m}s^{m+1}\mu^2\Gamma^2(r-r_+).
%%\leq c_4(\gamma_0^{-1}\alpha^2+\gamma_0^{-1}\alpha^2s+s^{m-3}\mu)\gamma_0^{(2d)^m}s^{2m-1}\mu^2\Gamma^2(r-r_+)\\ %&\leq c_4\gamma_0^{(2d)^m}s^{m+1}\mu^2\Gamma^2(r-r_+).
\end{align}
Using (\ref{R}) in Lemma \ref{le1} and (\ref{eF}) in Lemma \ref{le4}, we also have
\begin{align*}
\left|\partial_\xi^\ell[R]\right|_{\hat D\times G_+}+\left|\partial_\xi^\ell R\right|_{\hat D\times G_+}\leq\gamma^{(m+1)(2d)^m}s^{m}\mu
\end{align*}
and
\begin{align}\label{eRF}
\left|\partial_\xi^\ell\{[R],F\}\right|_{D_{\frac{1}{4}\alpha}\times G_+}+\left|\partial_\xi^\ell\{R,F\}\right|_{D_{\frac{1}{4}\alpha}\times G_+}\leq c\gamma^{(m+1)(2d)^m}s^{2m-2}\mu^2\Gamma(r-r_+).
%\leq c\gamma^{(2d)^m}s^{m+1}\mu^2\Gamma(r-r_+).
\end{align}
Let us recall the definition of $\bar P_+$ in (\ref{eq16}), that is
$$ \bar P_+=\int_0^1\{R_t,F\}\circ\phi_F^tdt+(P-R)\circ\phi_F^1+Q,~~~
R_t=(1-t)Q+(1-t)[R]+tR.$$
Then by (\ref{P-R}), (\ref{eQ}), (\ref{eQF}) and (\ref{eRF}),
\begin{align*}
\left|\partial_\xi^\ell\bar P_+\right|_{D_{\frac{1}{4}\alpha}\times G_+}&\leq c\alpha^{m+1}s^{2m-1}\mu^2\Gamma^2(r-r_+)+c\gamma^{(m+1)(2d)^m}s^{2m-2}\mu^2\Gamma(r-r_+)\\
&\quad+c\gamma^{(m+1)(2d)^m}s^{m+1}\mu+c(\alpha s)^{m+1}\mu\Gamma(r-r_+)\\
&\leq\alpha^{m+1}s^{m+1}\mu(s^{m-2}\mu\Gamma^2(r-r_+)+\Gamma(r-r_+))\\
&\quad+\gamma^{(m+1)(2d)^m}s^{m+1}\mu(s^{m-3}\mu\Gamma(r-r_+)+c),
 %&\leq
%c\alpha^{m-1} s^{2m-1}\mu\Gamma(r-r_+)+ c\gamma^{(2d)^m}s^{2m-2}\mu^2\Gamma(r-r_+)+ c\gamma^{(2d)^m}s^{m+1}\mu +c\alpha^2s^{m+1}\mu\Gamma(r-r_+) \\
%&\leq c(s^{m-2}\mu\Gamma^2+\gamma^{(2d)^m}s^{m-3}\mu\Gamma+\gamma^{(2d)^m}+\alpha^2\Gamma(r-r_+))s^{m+1}\mu
%c(A\Gamma(r-r_+)+A\mu\Gamma^2(r-r_+)+\mu\Gamma(r-r_+))\gamma_0^{(2d)^m}s^{m+1}\mu,
%&~~~~+\gamma_0^{(2d)^m}s^{m+1}\mu^2\Gamma^2(r-r_+)\\
%%&\leq\gamma_0^{(2d)^m}s^{m}s^{\frac{m}{m+1}}s^{\frac{1}{2(m+1)}}\mu s^{\frac{1}{2(m+1)}}\Gamma(r-r_+)\\
%%&~~~~+\gamma_0^{(2d)^m}s^{m}s^{\frac{m}{m+1}}s^{\frac{1}{2(m+1)}}\mu s^{\frac{1}{2(m+1)}}+\gamma_0^{(2d)^m}s^{m}s^{\frac{m}{m+1}}s^{\frac{1}{2(m+1)}}\mu s^{\frac{1}{2(m+1)}}\mu\Gamma^2(r-r_+)\\
%%&\leq\gamma_0^{(2d)^m}s_+^m\mu_+.
%%%&=(\gamma_0^{-1}\alpha^2s\Gamma(r-r_+)+\gamma_0^{-1}\alpha^2s^2\Gamma(r-r_+)+s^{m-2}\mu\Gamma(r-r_+)+s+s\mu\Gamma^2(r-r_+))\gamma_0^{(2d)^m}s^{m}\mu
%&(\gamma_0^{n+m}s^{\frac{2}{m+1}}+\gamma_0^{(2d)^m}\mu\Gamma(r-r_+))s^{m+1}\mu\Gamma(r-r_+). %c\gamma_0^{n+5}s^{2}\mu^2(\Gamma^2(r-r_+)+\Gamma(r-r_+)).
\end{align*}
which implies (\ref{eq29}).
%Recall that the definition of $(\ref{M})$, we have
%\begin{align*}
%|M|&=|[R](y+\xi_+-\xi,z+\zeta_+-\zeta)-[R](\xi_+-\xi,\zeta_+-\zeta)-[R](y,z)+[R](0,0)|\\
%&\leq c\gamma_0^{(2d)^m}s^{m-1}\mu(s^{m-1}\mu)^\frac{1}{L}+c\gamma_0^{(2d)^m}s^{m-2}\mu(s^{m-2}\mu)^\frac{1}{L'}
%\end{align*}

Moreover, by {(H9)} and the definition of $P_+$ as in (\ref{eq18}), we see
\begin{align*}
\left|\partial_\xi^\ell P_+\right|_{D_+\times G_+}%&\leq c_6(A\Gamma(r-r_+)+A\mu\Gamma^2(r-r_+)+\mu\Gamma(r-r_+))\gamma_0^{(2d)^m}s^{m+1}\mu\\
%&\leq  c_6(A\Gamma(r-r_+)+A\mu\Gamma^2(r-r_+)+\mu\Gamma(r-r_+))s^\rho \gamma_0^{(2d)^m} s^{m}s^{\frac{m}{m+1}}s^{\rho}\mu\\
&\leq\gamma_+^{(m+1)(2d)^m}s_+^m\mu_+.
\end{align*}

This completes the proof.
\end{proof}
This completes one cycle of KAM steps.
\section{Proof of Theorem \ref{th1}}\label{sec:4}
\subsection{Iteration lemma}
In this section, we will prove an iteration lemma which guarantees the inductive construction of the transformations in all KAM steps.

Let $r_0,\beta_0,\gamma_0,s_0,\alpha_0,
\mu_0,H_0,N_0,P_0$ be given at the beginning of Section \ref{sec:3} and let %$\tilde{D}_0=D(r_0,\beta_0)$,
$D_0=D(s_0,r_0)$, $\tilde D_0=D(\beta_0,r_0)$, $K_0=0$, $\Phi_0=id$. We define the following sequence inductively for all $\nu=1,2,\cdots$:
\begin{align*}
r_\nu&=r_0\left(\frac{1}{2}+\frac{1}{2^{\nu+1}}\right),\\
\beta_\nu&=\beta_0\left(\frac{1}{2}+\frac{1}{2^{\nu+1}}\right),\\
\gamma_\nu&=\gamma_0\left(\frac{1}{2}+\frac{1}{2^{\nu+1}}\right),\\
%\sigma_\nu&=\sigma_0(1-\sum_{i=1}^\nu\frac{1}{2^{i+1}}),\\
%\sigma'_\nu&=\sigma'_0(1-\sum_{i=1}^\nu\frac{1}{2^{i+1}}),\\
s_\nu&=\frac{1}{8}\alpha_{\nu-1}s_{\nu-1},\\
\alpha_\nu&=s_\nu^{2\rho}=s_\nu^{\frac{1}{m+1}},\\
\mu_\nu&=8^mc_0\mu_{\nu-1}s_{\nu-1}^{\rho},\\
%\gamma_\nu&=\gamma_0(1-\sum_{i=1}^\nu\frac{1}{2^{i+1}}),\\
K_\nu&=\left(\left[\log\left(\frac{1}{s_{\nu-1}}\right)\right]+1\right)^{3\eta},\\
%\color{red}G_\nu&=\{\xi\in G_{\nu-1}:|\langle k,\nabla h_{\nu-1}(\xi)\rangle|>\frac{\gamma_{\nu-1}}{|k|^\tau},~0<k\leq K_{\nu}\},\\
%{G_+}_\nu&={G_+}_{\mu_\nu}(\xi_\nu)\bigcap G_\nu,\\
D_\nu&=D(s_\nu,r_\nu),\\
\tilde{D}_\nu&=D\left(\beta_\nu, r_\nu+\frac{5}{8}(r_{\nu-1}-r_\nu)\right),\\
D(s_\nu)&=\{(y,z):|y|<s_\nu^2,|z|<s_\nu\},\\
%D(s_\nu,r_\nu)&=\{(x,y,z):|y|<s_\nu^2,|z|<s_\nu,|\textrm{Im}x|<r_\nu\},\\
G_{\nu}&=\Bigg\{\xi\in G_{\nu-1}: |\langle k,\omega_{\nu-1}\rangle|>\frac{\gamma}{|k|^\tau},\overline{A_{\imath\jmath}^{{\nu-1}\top}} A_{\imath\jmath}^{\nu-1}>\frac{\gamma^2}{|k|^{2\tau}}I_{(2d)^{\imath+\jmath}},~ for~ k\in\mathbb{Z}^n, 0<|k|<K_\nu,\\
&\quad~~2|\imath|+|\jmath|\leq m, 1\leq|\jmath|\Bigg\},~where ~A_{\imath\jmath}^{\nu-1}~ is~ defined ~ as ~in~ (\ref{A}).
\end{align*}
\begin{lemma}\label{le9}
Denote
%\begin{align*}
$$\mu_*=s^{2}\varepsilon^{\frac{1}{4(m+1)}}\gamma_0^{2(m+1)(2d)^m}.$$
%\end{align*}
If $\varepsilon$ is small enough, then the KAM step described in Section \ref{sec:3} is valid for all $\nu=0,1,\cdots$, resulting in the sequences
$$ e_\nu, \omega_\nu, \tilde h_\nu, g_\nu, \bar{g}_\nu, H_\nu, P_\nu, \Phi_\nu,$$
$\nu=1,2,\cdots,$ with the following properties:
\begin{itemize}
\item[(1)] For all $|\ell|\leq m$,
\begin{align}\label{e+-e}
\left|\partial_\xi^\ell({e}_{\nu+1}-{e}_{\nu})\right|_{D(s_{\nu+1})\times G_{\nu+1}}&\leq\frac{\mu_*^{\frac{1}{2}}}{2^{\nu}},\\\label{e+-e0}
\left|\partial_\xi^\ell({e}_\nu-{e}_{0})\right|_{D(s_{\nu})\times G_\nu}&\leq2\mu_*^{\frac{1}{2}},\\\label{iomega+-omega}
\left|\partial_\xi^\ell({\omega}_{\nu+1}-{\omega}_{\nu})\right|_{D(s_{\nu+1})\times G_{\nu+1}}&\leq\frac{\mu_*^{\frac{1}{2}}}{2^{\nu}},\\\label{omega+-omega0}
\left|\partial_\xi^\ell({\omega}_\nu-{\omega}_{0})\right|_{D(s_{\nu})\times G_\nu}&\leq2\mu_*^{\frac{1}{2}},\\\label{fth+-fth}
\left|\partial_\xi^\ell(\tilde{h}_{\nu+1}-\tilde{h}_{\nu})\right|_{D(s_{\nu+1})\times G_{\nu+1}}&\leq\frac{\mu_*^{\frac{1}{2}}}{2^{\nu}},\\\label{fth+-fth0}
\left|\partial_\xi^\ell(\tilde{h}_\nu-\tilde{h}_{0})\right|_{D(s_{\nu})\times G_\nu}&\leq2\mu_*^{\frac{1}{2}},\\\label{fg+-fg}
\left|\partial_\xi^\ell(g_{\nu+1}-g_{\nu})\right|_{D(s_{\nu+1})\times G_{\nu+1}}&\leq\frac{\mu_*^{\frac{1}{2}}}{2^{\nu}},\\\label{fg+-fg0}
\left|\partial_\xi^\ell(g_\nu-g_{0})\right|_{D(s_{\nu})\times G_\nu}&\leq2\mu_*^{\frac{1}{2}},\\\label{fbg+-fbg}
\left|\partial_\xi^\ell(\bar{g}_{\nu+1}-\bar{g}_{\nu})\right|_{D(s_{\nu+1})\times G_{\nu+1}}&\leq\frac{\mu_*^{\frac{1}{2}}}{2^{\nu}},\\\label{fbg+-fbg0}
\left|\partial_\xi^\ell(\bar{g}_\nu-\bar{g}_{0})\right|_{D(s_{\nu})\times G_\nu}&\leq2\mu_*^{\frac{1}{2}},\\\label{z+-z}
\left|\partial_\xi^\ell(\zeta_{\nu+1}-\zeta_{\nu})\right|_{G_{\nu+1}}&\leq (s_{\nu-1}^{m-1}\mu_{\nu-1})^{\frac{1}{L}},\\\label{p+}
%|\xi_{\nu+1}-\xi_{\nu}|&\leq(s_\nu^{m-2}\mu_\nu)^{\frac{1}{L'}},\\\label{epnu}
\left|\partial_\xi^\ell P_\nu\right|_{D_{\nu}\times G_\nu}&\leq\gamma_\nu^{(m+1)(2d)^m}s_\nu^m\mu_\nu.
\end{align}

\item[(2)] $G_{\nu+1}=\left\{\xi\in G_{\nu}: |\langle k,\omega_{\nu}\rangle|>\frac{\gamma_\nu}{|k|^\tau},~\overline{A_{\imath\jmath}^{{\nu}\top}} A_{\imath\jmath}^{\nu}>\frac{\gamma_\nu^2}{|k|^{2\tau}}I_{(2d)^{\imath+\jmath}},for~ k\in\mathbb{Z}^n,~ K_{\nu}<|k|\leq K_{\nu+1},~ 2|\imath|+|\jmath|\leq m,~ 1\leq|\jmath|\right\}$.
\item[(3)] {There exists a family of symplectic coordinate transformations $\Phi_{\nu+1}:\tilde{D}_{\nu+1}\times G_{\nu+1}\rightarrow \tilde{D}_{\nu}$ and a subset $G_{\nu+1}\subset G_\nu$,
    \begin{align}\label{Gnu+1} G_{\nu+1}=G_\nu\setminus\bigcup_{K_{\nu}<|k|\leq K_{\nu+1}}\mathcal{R}_{k}^{\nu+1}(\gamma_{\nu}),
    \end{align}
    where
   %{\color{blue} \begin{align*}
%    \mathcal{R}_{k}^{\nu+1}(\gamma_{\nu+1})=&\{\xi\in G_{\nu-1}: |\langle k,\omega_{\nu-1}\rangle|\leq\frac{\gamma}{|k|^\tau}, ~or~|\det A_{\imath\jmath}^{\nu-1}|\leq \frac{\gamma^{(2d)^{\imath+\jmath}}}{|k|^{\tau(2d)^{\imath+\jmath}}}, \\& for~ k\in\mathbb{Z}^n,~ K_{\nu}<|k|\leq K_{\nu+1},~ 2|\imath|+|\jmath|\leq m,~ 1\leq|\jmath|\},
%    \end{align*}
%    }
\begin{align*}
    \mathcal{R}_{k}^{\nu+1}(\gamma_{\nu})=&\left\{\xi\in G_{\nu}: |\langle k,\omega_{\nu}\rangle|\leq\frac{\gamma_\nu}{|k|^\tau},~\overline{A_{\imath\jmath}^{{\nu}\top}} A_{\imath\jmath}^{\nu}\leq\frac{\gamma_\nu^2}{|k|^{2\tau}}I_{(2d)^{\imath+\jmath}}, for~ k\in\mathbb{Z}^n,~ K_{\nu}<|k|\leq K_{\nu+1},~ 2|\imath|+|\jmath|\leq m,~ 1\leq|\jmath|\right\},
    \end{align*}
    such that on $\tilde{D}_{\nu+1}\times G_{\nu+1}$ \begin{equation*}
H_{\nu+1}=H_\nu\circ\Phi_{\nu+1}=N_{\nu+1}+P_{\nu+1}
\end{equation*}
and
\begin{align}\label{eq36}
\left|\partial_\xi^\ell\partial_{(x,y,z)}^i(\Phi_{\nu+1}-id)\right|_{\tilde{D}_{\nu+1}\times G_{\nu+1}}\leq\frac{\mu_*^{\frac{1}{2}}}{2^{\nu}}.
\end{align}}

%Moreover, on $\tilde D_{\nu+1}$,
%\begin{equation*}
%H_{\nu+1}=H_\nu\circ\Phi_{\nu+1}=N_{\nu+1}+P_{\nu+1}.
%\end{equation*}
%where
%\begin{align*}
%N_{\nu+1}&=e_{\nu+1}+\langle\omega,y\rangle+\bar h_+,\\
%\bar h_+&=\frac{1}{2}\langle y,A_{\nu+1} y\rangle+\hat h_+,
%\end{align*}
%$A_{\nu+1}$ is real symmetric, $\hat h_+=O(|y|^3)$.
%\item[(3)] $G_{\nu+1}=\{\xi\in G_{\nu}:|\langle k,\nabla h_\nu(\xi)\rangle|>\frac{\gamma_{\nu}}{|k|^\tau},~~~for ~all ~K_{\nu}<|k|\leq K_{\nu+1}\}$.
\end{itemize}
%(3) $G_{\nu}=\{\xi\in G_{\nu-1}: |\langle k,\omega_{\nu-1}\rangle|>\frac{\gamma}{|k|^\tau},\overline{A_{\imath\jmath}^{{\nu-1}\top}} A_{\imath\jmath}^{\nu-1}>\frac{\gamma^2}{|k|^{2\tau}}I_{(2d)^{\imath+\jmath}}, for~ k\in\mathbb{Z}^n, K_{\nu}<|k|<K_{\nu+1}, 2|\imath|+|\jmath|\leq m, 1\leq|\jmath|\}$ is nonempty.
\end{lemma}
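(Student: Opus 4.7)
The plan is to prove Lemma \ref{le9} by induction on $\nu$. The base case $\nu=0$ is set up in subsection \ref{subsec:3}: the quantities $e_0,\omega_0,\tilde h_0,g_0,\bar g_0,P_0$ are given there, $\Phi_0=id$, $G_0=G$, and (\ref{p+}) at $\nu=0$ is exactly (\ref{P0}). For the inductive step, assuming (1)--(3) hold through step $\nu$, I run one cycle of the KAM procedure of Section \ref{sec:3} with $(H_\nu,N_\nu,P_\nu,G_\nu,D_\nu)$ in the role of $(H,N,P,G,D)$.

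The first task is to verify the smallness conditions (H1)--(H9) at step $\nu$ from the parameter schedule. The relations $s_\nu=\frac{1}{8}\alpha_{\nu-1}s_{\nu-1}$ with $\alpha_\nu=s_\nu^{1/(m+1)}$ and $\mu_\nu=8^m c_0\mu_{\nu-1}s_{\nu-1}^\rho$ yield a super-geometric shrinkage of $s_\nu$ and an even faster shrinkage of $\mu_\nu$, while $r_\nu,\beta_\nu,\gamma_\nu$ lose only the controlled amount $O(2^{-\nu})$. The exponential decay built into the truncation order $K_{\nu+1}$ absorbs $\Gamma(r_\nu-r_{\nu+1})$ in the standard way, so conditions (H1), (H5)--(H6), (H8) become routine for $\varepsilon$ small. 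Condition (H4), namely $c_3(s_{\nu-1}^{m-1}\mu_{\nu-1})^{1/L}<\tfrac{1}{8}\alpha_\nu s_\nu$, and the key condition (H9), $c_6\Delta_\nu\le\gamma_{\nu+1}^{(m+1)(2d)^m}s_{\nu+1}^m\mu_{\nu+1}$, are exactly what force the lower bound (\ref{m}) on $m$ in terms of $L$: tracing the exponents of $s_\nu,\mu_\nu$ appearing in $\Delta_\nu$ reduces (H9) to a polynomial inequality in $m$ whose positive root is the quadratic expression in (\ref{m}).

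With (H0)--(H9) established at step $\nu$, I apply the lemmas of Section \ref{sec:3} in order: Lemma \ref{le1} provides the truncation $R_\nu$; Lemma \ref{le2} solves the homological equation (\ref{eq5}) over the Cantor set $G_{\nu+1}$ excised as in (\ref{Gnu+1}); Lemma \ref{le3} produces the translation $\zeta_{\nu+1}\in B_{(s_{\nu-1}^{m-1}\mu_{\nu-1})^{1/L}}(\zeta_\nu)$ annihilating the first-order $z$-terms, giving (\ref{z+-z}); Lemma \ref{le5} yields the one-step differences (\ref{e+-e}), (\ref{iomega+-omega}), (\ref{fth+-fth}), (\ref{fg+-fg}), (\ref{fbg+-fbg}) after one checks that $c_3(s_{\nu-1}^{m-1}\mu_{\nu-1})^{1/L}\le \mu_*^{1/2}/2^\nu$ with the chosen $\mu_*$; Lemma \ref{le7} delivers the symplectic estimate (\ref{eq36}); and Lemma \ref{le8} supplies (\ref{p+}) at step $\nu+1$. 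The absolute bounds (\ref{e+-e0}), (\ref{omega+-omega0}), (\ref{fth+-fth0}), (\ref{fg+-fg0}), (\ref{fbg+-fbg0}) then follow by telescoping and $\sum_{j=0}^\infty 2^{-j}=2$. The consistency of the Cantor-set definition in part (2), i.e.\ that the Diophantine-type conditions with $|k|\le K_\nu$ survive the frequency drift from $\omega_{\nu-1}$ to $\omega_\nu$ and from $A_{\imath\jmath}^{\nu-1}$ to $A_{\imath\jmath}^\nu$, is precisely the content of subsection \ref{subsubF}, applied at each iteration with $(\gamma,\gamma_+)$ replaced by $(\gamma_\nu,\gamma_{\nu+1})$.

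The main obstacle is (H9), which couples the small-divisor loss $\gamma_\nu^{-(m+1)(2d)^m}$, the analyticity-width loss through $\Gamma(r_\nu-r_{\nu+1})$, and the degeneracy exponent $L$ entering the translation radius $(s_{\nu-1}^{m-1}\mu_{\nu-1})^{1/L}$. Verifying it uniformly in $\nu$ is what dictates the quadratic bound (\ref{m}) on $m$ together with the initial scalings $\mu_0=\varepsilon^{1/(8(m+1))}$ and $\gamma_0=\varepsilon^{1/(2m(m+1)(2d)^m)}$; once these exponents are matched, every $\nu$-th smallness condition reduces to a single inequality of the form $\varepsilon^{\delta}\le 1$ for some $\delta>0$ independent of $\nu$, which holds provided $\varepsilon$ is initially sufficiently small, thereby closing the induction.
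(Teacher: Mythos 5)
Your proposal follows the same inductive structure as the paper's proof: verify the smallness hypotheses (H0)--(H9) at each step from the parameter schedule, invoke the lemmas of Section \ref{sec:3} in order to produce the transformed Hamiltonian and the one-step estimates, telescope to obtain the cumulative bounds, and use subsection \ref{subsubF} to see that the Cantor-set conditions are preserved under the frequency drift, which gives part (2). The base case $\Phi_0=id$, the role of (\ref{eq44}) in converting the one-step bounds from Lemma \ref{le5} into the $\mu_*^{1/2}/2^\nu$ form, and the summation $\sum_{\nu\geq0}2^{-\nu}=2$ are all as in the paper.

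One point of discrepancy: you attribute the quadratic lower bound (\ref{m}) on $m$ to condition (H9), claiming that ``tracing the exponents of $s_\nu,\mu_\nu$ appearing in $\Delta_\nu$ reduces (H9) to a polynomial inequality in $m$ whose positive root is the quadratic expression.'' In the paper, the computation that actually invokes (\ref{m}) is the verification of (H4): one needs $c_3\bigl(s_{-}^{m-1}\mu_{-}\bigr)^{1/L}<\tfrac18\alpha s$, and $m\geq\tfrac{L+\sqrt{L^2+16L+16}}{4}$ is precisely what ensures $\tfrac{m-1}{L}>\bigl(\tfrac{m+2}{m+1}\bigr)^2$, hence $s_{-}^{(m-1)/L}<\tfrac18 s^{(m+2)/(m+1)}=\tfrac18\alpha s$. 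The verification of (H9) in the paper is a separate computation built on $\gamma_\nu^{(m+1)(2d)^m}s_\nu^{1/(4(m+1))}\leq\gamma_{\nu+1}^{(m+1)(2d)^m}$ and the explicit scaling of $\alpha_\nu^{m+1}=s_\nu$, and does not involve $L$ at all. This is a mis-attribution of the mechanism rather than a gap that would break the argument, since (H4) is still on your list of conditions to verify; but a reader carrying out your plan would find the quadratic relation emerging from (H4), not from $\Delta_\nu$.
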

\begin{proof}
The proof amounts to the verification of {(H0)}-{(H9)} for all $\nu$.
For simplicity, we let $r_0=1$. We can make $s_0$ small by picking $\varepsilon_0$ sufficiently small.
So, we see that {(H0)} holds and {(H1)}-{(H9)} hold for $\nu=0$.

Note that
\begin{align}\label{munu}
\mu_\nu&=(8^mc_0)^\nu \left(\frac{1}{8}\right)^{\frac{(m+2)\left(\left(1+\frac{1}{m+1}\right)^\nu-1\right)-(\nu-1)}{2}} s_0^{\frac{1}{2}\left(\left(1+\frac{1}{m+1}\right)^\nu-1\right)}\mu_0,\\\label{snu}
s_\nu&=(\frac{1}{8})^{(m+1)\left(\left(1+\frac{1}{m+1}\right)^\nu-1\right)} s_0^{\left(1+\frac{1}{m+1}\right)^\nu}.
\end{align}
Let $\theta\gg1$ be fixed and $s_0$ be small enough so that
\begin{equation}\label{eq38}
s_0<\left(\frac{8}{\theta}\right)^{\frac{1}{2\rho}}<1.
\end{equation}
Then
\begin{align}
s_1&=\frac{1}{8}s_0^{1+2\rho}<\frac{1}{\theta}s_0<1,\notag\\
s_2&=\frac{1}{8}s_1^{1+2\rho}<\frac{1}{\theta}s_1<\frac{1}{\theta^2}s_0,\notag\\
\vdots\notag\\\label{eq39}
s_\nu&=\frac{1}{8}s_{\nu-1}^{1+2\rho}<\cdots<\frac{1}{\theta^\nu}s_0.
\end{align}
Denote
\begin{equation*}
\Gamma_\nu=\Gamma(r_\nu-r_{\nu+1}).
\end{equation*}
We notice that
\begin{equation}\label{eq40}
\frac{r_\nu-r_{\nu+1}}{r_0}=\frac{1}{2^{\nu+2}}.%=\frac{\beta_\nu-\beta_{\nu+1}}{\beta_0}.
\end{equation}
Since
\begin{align*}
\Gamma_\nu&\leq\int_1^\infty t^{(m+1)(2d)^m\tau+m}e^{-\frac{t}{2^{\nu+5}}}dt\\
&\leq((m+1)(2d)^m\tau+m)!2^{(\nu+5)((m+1)(2d)^m\tau+m)},
\end{align*}
it is obvious that for $\theta$ large enough,
\begin{align}\label{srho}
s_\nu^{\frac{\rho}{2}}\Gamma_\nu^i<s_\nu^{\frac{\rho}{2}}(\Gamma_\nu^i+\Gamma_\nu)\leq1,~~~i=1,2,%<\frac{\gamma_{\nu+1}^{(2d)^m}}{\gamma_\nu^{(2d)^m}}
\end{align}
and
\begin{equation*}%\label{eq41}
s_\nu\Gamma_\nu\leq s_\nu^{1-\rho}\leq\frac{s_0^{1-\rho}}{\theta^{(1-\rho)\nu}}.
\end{equation*}

Recall $\alpha_\nu=s_\nu^{\frac{1}{m+1}}$, then $\alpha_\nu^{m+1}=s_\nu$. In view of $s_0=s\varepsilon^{\frac{1}{8(m+1)}}\gamma_0^{(m+1)(2d)^m}$, we have $\gamma_0^{(m+1)(2d)^m}>s_0$. %Notice that $\gamma_\nu^{(m+1)(2d)^m}=(\frac{1}{2}+\frac{1}{2^{\nu+1}})^{(m+1)(2d)^m}$
%and (\ref{snu}). %$s_\nu=(\frac{1}{8})^{(m+1)((1+\frac{1}{m+1})^\nu-1)}s_0^{(1+\frac{1}{m+1})^\nu}$.
It is obvious by the definition of $\gamma_\nu$ and (\ref{snu}) that $\gamma_\nu^{(m+1)(2d)^m}>s_\nu$ if $\varepsilon_0$ is small enough. Moreover,
\begin{align*}
\gamma_\nu^{(m+1)(2d)^m}s_\nu^{\frac{1}{4(m+1)}}&=\left(\frac{1}{2}+\frac{1}{2^{\nu+1}}\right)^{(m+1)(2d)^m}\left(\frac{1}{8}\right)^{\frac{1}{4}\left(\left(1+\frac{1}{m+1}\right)^\nu-1\right)}s_0^{\frac{1}{4(m+1)}\left(1+\frac{1}{m+1}\right)^\nu}\\
&\leq\left(\frac{1}{2}+\frac{1}{2^{\nu+2}}\right)^{(m+1)(2d)^m}=\gamma_{\nu+1}^{(m+1)(2d)^m}
\end{align*}
as $s_0$ small enough.
This together with the definition of $\Delta_\nu$ as in Lemma \ref{le8} yields
\begin{align*}
\Delta_\nu&=\alpha_\nu^{m+1}s_\nu^{m+1}\mu_\nu\left(s_\nu^{m-2}\mu_\nu\Gamma^2(r_\nu-r_{\nu+1})+\Gamma(r_\nu-r_{\nu+1})\right)\\
&\quad+\gamma_\nu^{(m+1)(2d)^m}s_\nu^{2m-2}\mu_\nu^2\Gamma(r_\nu-r_{\nu+1})+\gamma_\nu^{(m+1)(2d)^m}s_\nu^{m+1}\mu_\nu c\\
%&\leq\alpha^{m+1}s^{m+1}\mu(s^{m-2}\mu\Gamma^2(r-r_+)+\Gamma(r-r_+))+\gamma^{(m+1)(2d)^m}s^{m+1}\mu(s^{m-3}\mu\Gamma(r-r_+)+c)\\
&\leq\gamma_\nu^{(m+1)(2d)^m}s_\nu^{\frac{1}{4(m+1)}}s_\nu^{(1+\frac{1}{m+1})m}s_\nu^{\frac{1}{2(m+1)}}\mu_\nu (s_\nu^{\frac{1}{4(m+1)}}s_\nu^{m-2}\mu_\nu\Gamma^2(r_\nu-r_{\nu+1})\\
&\quad+s_\nu^{\frac{1}{4(m+1)}}\Gamma(r_\nu-r_{\nu+1})+s_\nu^{\frac{1}{4(m+1)}}s_\nu^{m-3}\mu\Gamma(r_\nu-r_{\nu+1})+s_\nu^{\frac{1}{4(m+1)}}c)\\
&\leq\gamma_{\nu+1}^{(m+1)(2d)^m}s_{\nu+1}^m\mu_{\nu+1},
\end{align*}
which implies that {(H9)} holds for all $\nu\geq1$.

We note that for any constant $a>0$, $b>1$, $s^a\left(\log\frac{1}{s}+1\right)^b\rightarrow0$ as $s\rightarrow0$, then
\begin{align*}
3\gamma_0^2s_\nu K_{\nu+1}^{2\tau+1}&\leq 3\gamma_0^2 s_\nu\left(\left[\log\frac{1}{s_\nu}\right]+1\right)^{3\eta(2\tau+1)}\\
&\leq\left(\frac{1}{2}+\frac{1}{2^{\nu+1}}\right)^2-\left(\frac{1}{2}+\frac{1}{2^{\nu+2}}\right)^2\\
&=\gamma_\nu^2-\gamma_{\nu+1}^2,%((\frac{1}{8})^{(m+1)}s_0)^{((1+\frac{1}{m+1})^\nu-1)}\mu([\log\frac{1}{s}]+1)^{3\eta\tau(2d)^{m}}
\end{align*}
if $\varepsilon_0$ is small enough. Hence, {\textsc{(H8)}} holds.

By (\ref{eq40}) and (\ref{srho}), it is easy to verify that {(H5)}-{(H7)} hold for all $\nu\geq1$ if $\theta$ is large enough and $\varepsilon_0$ is small enough.

%Recall that $m\geq\frac{1+2L'+\sqrt{4L'^2+20L'+9}}{2},~L'\geq1$ as in (\ref{m}). We get
%\begin{align*}
%\frac{m-2}{L'}>\frac{2(m+2)}{m+1},
%\end{align*}
%then
%$$s^\frac{m-2}{L'}<s^\frac{2(m+2)}{m+1},$$
%i.e.,
%$$(s^{m-2}\mu)^\frac{1}{L'}<\frac{1}{8}s^\frac{2(m+2)}{m+1}=\frac{1}{8}(s^\frac{1}{m+1}s)^2=(\frac{1}{8}\alpha s)^2,$$
%which implies \textbf{(H3)}.

It is obvious by $m\geq\frac{L+\sqrt{L^2+16L+16}}{4},~L\geq2$ as in (\ref{m}) that
$$\frac{m-1}{L}>2\frac{m+2}{m+1}>\left(\frac{m+2}{m+1}\right)^2,$$
then $$s_{-}^\frac{m-1}{L}<s_{-}^{\left(\frac{m+2}{m+1}\right)^2},$$
i.e.,
$$\left(s_{-}^{m-1}\mu_{-}\right)^\frac{1}{L}<\left(\frac{1}{8}\right)^{1+\frac{m+2}{m+1}}s_{-}^{\left(\frac{m+2}{m+1}\right)^2}=\frac{1}{8}s^\frac{m+2}{m+1}=\frac{1}{8}\alpha s,$$
which implies {(H4)}.

%\begin{align*}
%&\textbf{\textsc{(H2)}}:\max_{i\leq2}|\partial_y^i\tilde{h}- \partial_y^i\tilde{h}_0|_{D(s)}\leq \mu_0^{\frac{1}{2}},\\
%&\textbf{\textsc{(H3)}}:4s<\frac{\gamma-\gamma_+}{(M^*+2)K_+^{\tau+1}},
%\end{align*}
To verify {\textsc{(H3)}}, we observe by (\ref{snu}) and (\ref{eq39}) that
$$4(M^*+2)s_\nu K_{\nu+1}^{\tau+1}<\frac{s_0}{2^{\nu+2}}<\frac{\gamma_0}{2^{\nu+2}},$$
as $\theta$ is large enough, which verifies {\textsc{(H3)}} for all $\nu\geq1$.
 %Then
%$$4(M^*+2)s_\nu K_{\nu+1}^{\tau+1}<\frac{s_0}{2^{\nu+2}}<\frac{\gamma_0}{2^{\nu+2}},$$

Let $\theta^{1-\rho}\geq2$ in (\ref{eq38}), (\ref{eq39}). We have that for all $\nu\geq1$
\begin{align}\label{eq44}
s_\nu&\leq\frac{s_0}{2^\nu}\leq\frac{\mu_*^\frac{1}{2}}{2^\nu}.%\\\label{eq45}
%s_\nu\Gamma_\nu&\leq\frac{s_0^{1-\rho}}{2^\nu}\leq\frac{\mu_*^\frac{1}{2}}{2^\nu},\notag\\
%s_\nu\mu_\nu&\leq\frac{\mu_0s_0^{1+\rho}}{2^{\nu}}\leq\frac{\mu_*}{2^\nu}.\notag
\end{align}
The verification of {\textsc{(H2)}} follows from (\ref{eq44}) and an inductive application of (\ref{eq25}) in Lemma \ref{le5} for all $\nu=0,1,\cdots.$

%To verify $\textbf{(H3)}$, we observe by (\ref{snu}) and (\ref{eq39}) that
%\begin{equation*}
%\frac{1}{4}(M^*+2)s_{\nu-1}^{2\rho}K_{\nu+1}^{\tau+1}<\frac{1}{2^{\nu+2}},
%\end{equation*}
%as $\zeta$ is large enough.
%Then
%\begin{align}\label{eq42}
%2(M^*+2)s_\nu K_{\nu+1}^{\tau+1}&\leq\frac{s_{\nu-1}}{4}(M^*+1)s_{\nu-1}^{2\rho}K_{\nu+1}^{\tau+1}\\\label{eq43}
%&\leq\frac{s_0}{2^{\nu+2}}<\frac{\gamma_0}{2^{\nu+2}}<\gamma_0,%\gamma_\nu-\gamma_{\nu+1},
%\end{align}
%which verifies $\textbf{(H3)}$ for all $\nu\geq1$.

%The verification of $\textbf{(H2)}$ follows from $(\ref{eq44})$ and an induction application of $(\ref{eq14})$ for all $\nu=0,1,\cdots.$

Since $(1+\rho)^\eta>2$, we have
\begin{align*}
\frac{r_0}{2^{\nu+6}}\left(\left[\log\frac{1}{s}\right]+1\right)^\eta&\geq\frac{r_0}{2^{\nu+6}}\left(-(1+\rho)^\nu\log s_0\right)^\eta\\
&\geq-\frac{r_0}{2^{\nu+6}}(1+\rho)^{\eta\nu}(\log\mu_0)^\eta\\
&\geq1.
\end{align*}
It follows from above that
\begin{align*}
&\log(n+1)!+(\nu+6)n\log2+3n\eta\log\left(\left[\log\frac{1}{s}\right]+1\right)-\frac{r_0}{2^{(\nu+6)}}\left(\left[\log\frac{1}{s}\right]+1\right)^{3\eta}\\
&\leq\log(n+1)!+(\nu+6)n\log2+3n\eta\log\left(\log\frac{1}{s}+2\right)-\left(\log\frac{1}{s}\right)^{2\eta}\\
&\leq-\log\frac{1}{s},
\end{align*}
as $\mu$ is small, which is ensured by making $\varepsilon$ small.
Thus,
\begin{equation*}
\int_{K_{\nu+1}}^\infty t^{n}e^{-\frac{tr_0}{2^{\nu+6}}}dt\leq(n+1)!2^{(\nu+6)n}K_{\nu+1}^{n}e^{-\frac{K_{\nu+1}}{2^{\nu+6}}}\leq s,
\end{equation*}
i.e. {(H1)} holds.

Above all, the KAM steps described in Section \ref{sec:3} are valid for all $\nu$, which give the desired sequences stated in the lemma.

Now, (\ref{e+-e}), (\ref{iomega+-omega}), (\ref{fth+-fth}), (\ref{fg+-fg}) and  (\ref{fbg+-fbg}) follow from (\ref{eq25}), (\ref{eq48}) and (\ref{bg+-bg}) in Lemma \ref{le5} and (\ref{eq44}); by adding up (\ref{e+-e}), (\ref{omega+-omega}), (\ref{fth+-fth}), (\ref{fg+-fg}) and  (\ref{fbg+-fbg}) for all $\nu=0,1,\cdots$, we can get (\ref{e+-e0}), (\ref{omega+-omega0}) , (\ref{fth+-fth0}), (\ref{fg+-fg0}) and  (\ref{fbg+-fbg0}) respectively; (\ref{p+}) follows from (\ref{ep+}) in Lemma \ref{le8}; (\ref{z+-z})  follows from Lemma \ref{le5}.

{Note that (2) automatically holds for $\nu=0$. We now let $\nu>0$. By subsubsection \ref{subsubF}, it is obvious that
$$G_{\nu}=\left\{\xi\in G_{\nu}: |\langle k,\omega_{\nu}\rangle|>\frac{\gamma_\nu}{|k|^\tau},~\overline{A_{\imath\jmath}^{{\nu}\top}} A_{\imath\jmath}^{\nu}>\frac{\gamma_\nu^2}{|k|^{2\tau}}I_{(2d)^{\imath+\jmath}},for~ k\in\mathbb{Z}^n,~0<|k|\leq K_{\nu},~ 2|\imath|+|\jmath|\leq m,~ 1\leq|\jmath|\right\}.$$
Denote
$$\hat G_{\nu+1}=\left\{\xi\in G_{\nu}: |\langle k,\omega_{\nu}\rangle|>\frac{\gamma_\nu}{|k|^\tau},~\overline{A_{\imath\jmath}^{{\nu}\top}} A_{\imath\jmath}^{\nu}>\frac{\gamma_\nu^2}{|k|^{2\tau}}I_{(2d)^{\imath+\jmath}},for~ k\in\mathbb{Z}^n,~K_\nu<|k|\leq K_{\nu+1},~ 2|\imath|+|\jmath|\leq m,~ 1\leq|\jmath|\right\}.$$
Then
\begin{align*}
G_{\nu+1}=&\left\{\xi\in G_{\nu}: |\langle k,\omega_{\nu}\rangle|>\frac{\gamma_\nu}{|k|^\tau},~\overline{A_{\imath\jmath}^{{\nu}\top}} A_{\imath\jmath}^{\nu}>\frac{\gamma_\nu^2}{|k|^{2\tau}}I_{(2d)^{\imath+\jmath}},for~ k\in\mathbb{Z}^n,~0<|k|\leq K_{\nu+1},~ 2|\imath|+|\jmath|\leq m,~ 1\leq|\jmath|\right\}\\
=&G_\nu\cap\hat G_{\nu+1}=\hat G_{\nu+1},
\end{align*}
which implies (\ref{Gnu+1}).}

This completes the proof.
\end{proof}

\subsection{Convergence}
The convergence is standard. For the sake of completeness, we briefly give the outline of the proof.
Let
\begin{align*}
\Psi^\nu=\Phi_0\circ\Phi_1\circ\cdots\circ\Phi_\nu.
\end{align*}
Recalling that $\Phi_{\nu+1}:\tilde{D}_{\nu+1}\rightarrow \tilde{D}_{\nu}$ in
Lemma \ref{le9}, we have
\begin{align*}
\Psi^\nu&:\tilde{D}_\nu\rightarrow \tilde{D}_0,\\
H_0\circ\Psi^\nu&=H_\nu=N_\nu+P_\nu,
\end{align*}
$\nu=0,1,\cdots,$ where $\Psi^0=id$. { Let $G_*=\bigcap_{\nu=0}^{\infty} G_\nu$. First, we show the uniform convergence of $\Psi^\nu$ on $D\left(\frac{\beta_0}{2},\frac{r_0}{2}\right)\times G_*$. Note that
\begin{align*}
\Psi^\nu=id+\sum_{i=1}^\nu\left(\Psi^i-\Psi^{i-1}\right),
\end{align*}
where, for each $i=1,2,\cdots$,
\begin{align*}
\Psi^i-\Psi^{i-1}&=\Phi_0\circ\cdots\circ\Phi_i-\Phi_0\circ\cdots\circ\Phi_{i-1}\\
&\leq\int_0^1D(\Phi_0\circ\cdots\circ\Phi_{i-1})(id+\theta(\Phi_i-id))d\theta(\Phi_i-id).
\end{align*}
Since, by (\ref{eq36}), on $D(\frac{\beta_0}{2},\frac{r_0}{2})\times G_*$,
\begin{align*}
&\left|D(\Phi_0\circ\cdots\circ\Phi_{i-1})(id+\theta(\Phi_i-id))\right|\\
&\leq|D\Phi_0(\Phi_1\circ\cdots\circ\Phi_{i-1})(id+\theta(\Phi_i-id))|\cdots|D\Phi_{i-1}(id+\theta(\Phi_i-id))|\\
&\leq \left(1+\mu_*^{\frac{1}{2}}\right)\left(1+\frac{\mu_*^{\frac{1}{2}}}{2}\right)\cdots\left(1+\frac{\mu_*^{\frac{1}{2}}}{2^{i-1}}\right)\leq e^{1+\frac{1}{2}+\cdots+\frac{1}{2^{i-1}}}\leq e^2,
\end{align*}
we have
\begin{align*}
|\Psi^i-\Psi^{i-1}|_{D(\frac{\beta_0}{2},\frac{r_0}{2})\times G_*}\leq e^2|\Phi_i-id|_{D(\frac{\beta_0}{2},\frac{r_0}{2})\times G_*}\leq e^2 \frac{\mu_*^{\frac{1}{2}}}{2^{i}},
\end{align*}
for all $i=1,2,\cdots.$ Thus, $\Psi^\nu$ converges uniformly on $D\left(\frac{\beta_0}{2},\frac{r_0}{2}\right)\times G_*$. We denote its limit by $\Psi^*$. } Then
\begin{align}
\Psi^*=\Psi^0+\sum_{i=1}^\infty\left(\Psi^i-\Psi^{i-1}\right),
\end{align}
which is uniformly close to the identity.
%It follows that $\Psi^*$ is real analytic in $(x, y, z)$ and
By a standard argument using the Whitney Extension Theorem, one can further show that $\Psi^*$ is Whitney smooth with respect to $\xi\in G_* $ (see \cite{han,li2,li} for details).
By Lemma \ref{le9},we see that $e_\nu$, $ \omega_\nu$, $\tilde h_\nu$, $ g_\nu$, $\bar{g}_\nu$ and $\zeta_\nu$ are uniformly convergent and denote the limits by  $e_*$, $ \omega_*$, $\tilde h_*$, $g_*$, $\bar{g}_*$ and $\zeta_*$, respectively. It follows from Lemma \ref{le3} that $$\nabla g_*(0)=\cdots=\nabla g_\nu(0)=\cdots=\nabla g_0(\zeta_0)=0.$$
Then, $N_\nu$ converge uniformly to
\begin{align*}
N_*=e_*+\langle\omega_*,y\rangle+\tilde h_* (y)+g_*(z)+\bar{g}_*(y, z),
\end{align*}
with
\begin{align}
\tilde h_*(y)&=O(|y|^2),\notag\\
g_*(z)&=g(z)+\sum_{i=0}^{\infty}\gamma_i^{(m+1)(2d)^m}s_i^{m-2}\mu_i O(|z|^2)\notag\\\label{g*2}
&=g(z)+\varepsilon^{\frac{(m-2)(5m+4)}{8m(m+1)}} O(|z|^2),\\
\bar g_*(y,z,\xi)&=\sum_{2|\imath|+|\jmath|\leq m,1\leq |\imath|, |\jmath| }\bar g_{\imath\jmath}(\xi)y^\imath z^\jmath,\notag
\end{align}
where (\ref{g*2}) follows from (\ref{eq44}) and the definition of $\mu_*$ in Lemma \ref{le9}.

Hence
\begin{align*}
P_\nu=H_0\circ\Psi^\nu-N_\nu
\end{align*}
converges uniformly to
\begin{equation*}
P_*=H_0\circ\Psi^*-N_*.
\end{equation*}
Since
\begin{align*}
\left|P_\nu\right|_{D_\nu}\leq\gamma_\nu^{(m+1)(2d)^m}s_\nu^m\mu_\nu,
\end{align*}
the Cauchy estimate implies that
$$\left|\partial_y^i\partial_z^jP_\nu\right|_{D\left(\frac{s_\nu}{2},r_\nu\right)}\leq\gamma_\nu^{(m+1)(2d)^m}s_\nu^{(m-2i-j)}\mu_\nu,$$
for all $2|i|+|j|\leq m$. By (\ref{munu}) and (\ref{snu}), it is easy to see that the right hand side of the above converges to $0$ as $\nu\rightarrow\infty$. Hence, on ${D\left(0,\frac{r_0}{2}\right)}\times G_*$,
$$\partial_y^j\partial_z^jP_*\big|_{(y,z)=(0,0)}=0,$$
for all $x\in \mathbb{T}^n$, $\omega\in G_*$, $2|i|+|j|\leq m$.
It follows that
for each $\xi\in G_*$, $\mathbb{T}^n\times\{0\}\times\{0\}$ is an analytic, quasi-periodic, invariant
$n$-torus associated to the Hamiltonian $H_\infty=H\circ\Psi_\infty$ with frequency $\omega_*$.

\subsection{Measure estimate}
{\begin{lemma}\label{le10}
$$\left|G_0\backslash G_*\right|\rightarrow0,~as~\varepsilon\rightarrow0.$$
%The set $$\{\xi\in G:|\langle k,\omega\rangle|\neq0,  \overline {A_{\imath\jmath}^{\nu\top}} A_{\imath\jmath}^\nu\neq0\},~~\forall k\in\mathbb Z^n\backslash\{0\},$$
%admits full Lebesgue measure relative to $ G$ under condition $\textbf{(A1)}$,
%where $A_{\imath\jmath}^\nu$, $2|\imath|+|\jmath|\leq m$, $|\jmath|\geq1$, are defined as in (\ref{A}).
\end{lemma}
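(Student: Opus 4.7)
The plan is to decompose the deleted set using (\ref{Gnu+1}) as
\begin{equation*}
G_0\setminus G_*=\bigcup_{\nu\geq 0}(G_\nu\setminus G_{\nu+1})=\bigcup_{\nu\geq 0}\bigcup_{K_\nu<|k|\leq K_{\nu+1}}\mathcal{R}_k^{\nu+1}(\gamma_\nu),
\end{equation*}
and to bound each resonant slab $\mathcal{R}_k^{\nu+1}(\gamma_\nu)$ by means of the R\"ussmann-like non-degeneracy (A1). Because the frequency drift (\ref{iomega+-omega})--(\ref{omega+-omega0}) in Lemma \ref{le9} gives $|\omega_\nu-\omega|_{G_\nu}\leq 2\mu_*^{1/2}\ll\gamma_\nu/|k|^\tau$ for all $k$ with $|k|\leq K_{\nu+1}$ (once $\varepsilon_0$ is small enough, since $K_\nu$ grows only logarithmically in $s_\nu$), the resonant slab for $\omega_\nu$ is contained in a slightly enlarged resonant slab for the unperturbed frequency $\omega_0=\omega$. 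Thus it suffices to estimate, for each nonzero $k$, the measure of the set $\{\xi\in G:|\langle k,\omega(\xi)\rangle|\leq 2\gamma_\nu/|k|^\tau\}$ together with the matrix analogue coming from $\overline{A_{\imath\jmath}^{\nu\top}}A_{\imath\jmath}^\nu$.

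For the first-Melnikov (scalar) condition I would apply R\"ussmann's lemma directly: since (A1) guarantees that some derivative of order at most $M$ of $A_0(\xi)=\langle k/|k|,\omega(\xi)\rangle$ is bounded below uniformly in $\xi\in G$ and in $k\in\mathbb Z^n\setminus\{0\}$, the standard R\"ussmann estimate yields
\begin{equation*}
\bigl|\{\xi\in G:|\langle k,\omega(\xi)\rangle|\leq \delta\}\bigr|\leq c\bigl(\delta/|k|\bigr)^{1/M}.
\end{equation*}
For the second-Melnikov (matrix) condition I would use the explicit form (\ref{A}), namely $A_{\imath\jmath}=\sqrt{-1}\langle k/|k|,\omega\rangle I_{(2d)^{\imath+\jmath}}+\tilde S_{\imath\jmath}/|k|$, to note that the spectrum of the Hermitian matrix $\overline{A_{\imath\jmath}^\top}A_{\imath\jmath}$ consists of numbers of the form $|\sqrt{-1}\langle k/|k|,\omega\rangle+\lambda_j/|k||^2$, where $\lambda_j$ ranges over the eigenvalues of $\tilde S_{\imath\jmath}$. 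Since $\tilde S_{\imath\jmath}$ is essentially independent of $\xi$ (it is built from $\partial_z^2 g(0)$ and $J$, with at most a small $\xi$-dependent correction controlled by (\ref{tS-S})), the condition $\overline{A_{\imath\jmath}^\top}A_{\imath\jmath}\leq \gamma_\nu^2/|k|^{2\tau}$ confines $\xi$ to the union of finitely many slabs of the form $\{\xi:|\langle k,\omega(\xi)\rangle+\mathrm{const}_j|\leq\gamma_\nu/|k|^{\tau-1}\}$, each of which satisfies the same R\"ussmann bound as the first-Melnikov slab (possibly with a slightly worsened constant).

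Combining these two estimates yields $|\mathcal{R}_k^{\nu+1}(\gamma_\nu)|\leq c(\gamma_\nu/|k|^\tau)^{1/M}$. Summing over $\nu$ and $k$, and choosing $\tau$ large enough so that $\tau/M>n$,
\begin{equation*}
|G_0\setminus G_*|\leq c\sum_{\nu\geq 0}\sum_{|k|>K_\nu}\Bigl(\frac{\gamma_\nu}{|k|^\tau}\Bigr)^{1/M}\leq c\gamma_0^{1/M}\sum_{\nu\geq 0}K_\nu^{n-\tau/M}\leq c\gamma_0^{1/M}.
\end{equation*}
Since $\gamma_0=\varepsilon^{1/(2m(m+1)(2d)^m)}\to 0$ as $\varepsilon\to 0$, this produces $|G_0\setminus G_*|\to 0$, completing the proof. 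The main obstacle will be the matrix inequality: while the first-Melnikov part is a textbook R\"ussmann application, one must verify carefully that the $\xi$-dependent correction to $\tilde S_{\imath\jmath}$ generated along the iteration does not spoil the lower bound on the relevant derivative of $\langle k/|k|,\omega(\xi)\rangle$, and that the eigenvalue decomposition of $\overline{A_{\imath\jmath}^\top}A_{\imath\jmath}$ reduces cleanly to scalar R\"ussmann slabs of comparable size.
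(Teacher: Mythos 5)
Your outer structure — decomposing $G_0\setminus G_*$ into resonant slabs $\mathcal{R}_k^{\nu+1}(\gamma_\nu)$, controlling each by a R\"ussmann-type estimate grounded in (A1), and then summing over $\nu$ and $k$ — matches the paper's route. The scalar (first Melnikov) half of your argument is essentially what the paper does, modulo the exponent: you write $1/M$ where the paper derives $1/(M+1)$ from the Taylor expansion up to order $M$ plus remainder; this is cosmetic and does not affect the conclusion. What is genuinely missing is the handling of the matrix condition, and you in fact name it yourself as ``the main obstacle'' without resolving it.

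The concrete gap: you claim the spectrum of $\overline{A_{\imath\jmath}^\top}A_{\imath\jmath}$ is $\{|\sqrt{-1}\langle k/|k|,\omega\rangle+\lambda_j/|k||^2\}$ with $\lambda_j$ the eigenvalues of $\tilde S_{\imath\jmath}$, and from this infer a covering by finitely many scalar slabs. That identification of eigenvalues of $\overline{A^\top}A$ with squared moduli of eigenvalues of $A$ requires $A_{\imath\jmath}$ to be a \emph{normal} matrix, i.e.\ $\tilde S_{\imath\jmath}$ normal. But $\tilde S_{\imath\jmath}$ is built from $\partial_z^2 g(0)$ and $J$, and a product of a symmetric matrix with the symplectic matrix is not normal in general. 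For non-normal $A$, the smallest eigenvalue of $\overline{A^\top}A$ (i.e.\ the smallest singular value squared) is not the smallest $|\lambda(A)|^2$, so the reduction to slabs $\{|\langle k,\omega\rangle+\mathrm{const}_j|\lesssim\gamma_\nu/|k|^{\tau-1}\}$ does not follow. A separate issue: the $\lambda_j$ depend on $\xi$ through the $O(|z|^2)$ corrections that accrue to $g_\nu$ along the iteration, so (A1) — which is a non-degeneracy statement about $\langle k/|k|,\omega(\xi)\rangle$ alone — does not automatically yield a R\"ussmann bound for $\langle k/|k|,\omega(\xi)\rangle+\lambda_j(\xi)/|k|$; one must rule out that the $\xi$-variation of $\lambda_j$ cancels the $\xi$-variation of $\omega$.

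The paper closes both gaps simultaneously by a different mechanism: it Taylor-expands $A_{\imath\jmath}(\xi)=\mathcal{L}(\xi_0)\Theta(\xi)$ around a base point $\xi_0$, uses (A1) together with the smallness of $\tilde S_{\imath\jmath}$ (which is perturbative along the iteration, cf.\ (\ref{tS-S})) to obtain the full rank condition (\ref{maC}) on the column block $\{\partial_\xi^i A_{\imath\jmath}^j : 0\leq |i|\leq M\}$, then invokes an orthogonal reduction, the Poincar\'e Separation Theorem, and the L\"owner order to bound $\overline{A_{\imath\jmath}^\top}A_{\imath\jmath}$ from below by $c\,(\min_j|\hat\xi_j|)^{2M+2} I$. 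This sidesteps any normality assumption and any eigenvalue decomposition, and gives the exponent $1/(M+1)$ by a finite cover. If you want to keep an eigenvalue-based reduction, you would need instead to pass through $|\det A_{\imath\jmath}|=\prod_j|\sqrt{-1}\langle k/|k|,\omega\rangle+\lambda_j/|k||$ and use $\sigma_{\min}(A)\geq |\det A|/\sigma_{\max}(A)^{N-1}$, but even that route still leaves the $\xi$-dependence of the $\lambda_j$ to be controlled, which is precisely what the paper's rank argument handles.
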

\begin{proof}
Since all the eigenvalues of normal direction are $0$ in $0$-th step, the measure estimate of $G_1$ in the first step can be reduced to the estimate of $\langle {k},\omega\rangle$. Moreover, if $\tilde S_{\imath\jmath}^\nu\equiv0$, the measure estimate of $G_\nu$ in every step also can be reduced to the estimate of $\langle {k},\omega\rangle$.  For details, see \cite{li2}. Otherwise, if $\tilde S_{\imath\jmath}^\nu\neq0$, the measure estimate of $G_\nu$ is referred to \cite{qian3}. For the sake of completeness, we give the outline of the proof.

Recall \begin{align*}
    \mathcal{R}_{k}^{\nu+1}(\gamma_{\nu})=&\Bigg\{\xi\in G_{\nu}: |\langle k,\omega_{\nu}\rangle|\leq\frac{\gamma_\nu}{|k|^\tau},~\overline{A_{\imath\jmath}^{{\nu}\top}} A_{\imath\jmath}^{\nu}\leq\frac{\gamma_\nu^2}{|k|^{2\tau}}I_{(2d)^{\imath+\jmath}}, \\& for~ k\in\mathbb{Z}^n,~ K_{\nu}<|k|\leq K_{\nu+1},~ 2|\imath|+|\jmath|\leq m,~ 1\leq|\jmath|\Bigg\}\\
    \subset&\mathcal{S}_1\cup\mathcal{S}_2,
    \end{align*}
where \begin{align*}
\mathcal{S}_1&=\left\{\xi\in G_{\nu}: |\langle k,\omega_{\nu}\rangle|\leq\frac{\gamma_\nu}{|k|^\tau}\right\},\\
\mathcal{S}_2&=\left\{\xi\in G_{\nu}: \overline{A_{\imath\jmath}^{{\nu}\top}} A_{\imath\jmath}^{\nu}\leq\frac{\gamma_\nu^2}{|k|^{2\tau}}I_{(2d)^{\imath+\jmath}}\right\}.
\end{align*}
With Taylor series,
$$A_{\imath\jmath}(\xi)=\mathcal{L}(\xi_0)\Theta(\xi),$$
where
\begin{align*}
\mathcal{L}(\xi_0)&=\left(A_{\imath\jmath}(\xi_0),\partial_\xi A_{\imath\jmath}(\xi_0),\cdots,\partial_\xi^i A_{\imath\jmath}(\xi_0),\int_{0}^1(1-t)^{|i+1|}\partial_\xi^{i+1}A_{\imath\jmath}(\xi_0+t\hat\xi)dt\right),\\
\Theta(\xi)&=\left(I,\hat\xi I,\cdots,\hat\xi^iI,\hat\xi^{i+1}I\right)^\top,\\
\hat\xi&=\xi-\xi_0.
\end{align*}
Noting that $\tilde S_{\imath\jmath}$ comes from the perturbation and {(A1)}, we have
\begin{align}\label{maC}
rank\left\{\mathcal{C}_j: 1\leq j\leq (2d)^{\imath+\jmath}\right\}=(2d)^{\imath+\jmath},\end{align}
where $A_{\imath\jmath}^j$ is the $j$-th column of $A_{\imath\jmath}$, $\mathcal{C}_j$ is a column of $\left\{\partial_\xi^i A_{\imath\jmath}^j:0\leq|i|\leq M\right\}$ and $A_{\imath\jmath}$ is defined as in (\ref{A}).
By (\ref{maC}) and the continuity of determinant, there is an orthogonal matrix $Q_{\xi_0}$ such that
$$\mathcal{L}(\xi)Q_{\xi_0}=\left(E(\xi),F(\xi)\right),~~~for~ \xi\in\bar{G}_{\xi_0},$$
where $E(\xi)$ is a $(2d)^{\imath+\jmath}\times (2d)^{\imath+\jmath}$ nonsingular matrix on $\bar{G}_{\xi_0}$, and $\bar{G}_{\xi_0}$ is the closure of a neighborhood $G_{\xi_0}$ of $\xi_0$.
Moreover,
\begin{align*}
A_{\imath\jmath}&=\mathcal{L}(\xi)Q_{\xi_0}Q_{\xi_0}^{-1}\Theta=(E(\xi),F(\xi))\tilde\Theta,\\
\overline{A_{\imath\jmath}^\top}&=\overline{\tilde{\Theta}^\top}\overline{(E(\xi),F(\xi))^\top},\\
\tilde\Theta&=Q_{\xi_0}^{-1}\Theta=\left(\begin{array}{c}
\tilde\Theta_1\\
\tilde\Theta_2
\end{array}\right),
\end{align*}
where $\tilde\Theta_1$ is a $(2d)^{\imath+\jmath}\times (2d)^{\imath+\jmath}$-order matrix.
Obviously,
$$rank\left(\begin{array}{cc}
\overline{E^\top}E&\overline{E^\top}F\\
\overline{F^\top}E&\overline{F^\top}F\end{array}\right)= (2d)^{\imath+\jmath}.$$
Then there is an unitary matrix $U_\xi$ such that
\begin{align}
U_\xi^{-1}\left(\begin{array}{cc}
\overline{E^\top}E&\overline{E^\top}F\\
\overline{F^\top}E&\overline{F^\top}F\end{array}\right)U_\xi=\left(\begin{array}{cc}
\diag(\lambda_1,\cdots,\lambda_{(2d)^{\imath+\jmath}})&0\\
0&0\end{array}\right),
\end{align}
where $\lambda_i$, $1\leq i\leq (2d)^{\imath+\jmath}$, are nonvanishing eigenvalue of $\left(\begin{array}{cc}
\overline{E^\top}E&\overline{E^\top}F\\
\overline{F^\top}E&\overline{F^\top}F\end{array}\right)$. By Poincar\'{e} Separation Theorem (for example, p. 96, \cite{wang}),
\begin{align*}
\overline{A_{\imath\jmath}^\top}A_{\imath\jmath}&\geq\overline{\tilde{\Theta}^\top}\left(\begin{array}{cc}
\overline{E^\top}E&\overline{E^\top}F\\
\overline{F^\top}E&\overline{F^\top}F\end{array}\right)\tilde{\Theta}\\
&\geq\overline{\tilde{\Theta}^\top}\left(\begin{array}{cc}
\diag(\lambda_1,\cdots,\lambda_{(2d)^{\imath+\jmath}})&0\\
0&0\end{array}\right)\tilde{\Theta}\\
&\geq\left(\begin{array}{cc}\overline{\tilde{\Theta}_1^\top}&\overline{\tilde{\Theta}_2^\top}\end{array}\right)\left(\begin{array}{cc}
\diag(\lambda_1,\cdots,\lambda_{(2d)^{\imath+\jmath}})&0\\
0&0\end{array}\right)\left(\begin{array}{c}
\tilde\Theta_1\\
\tilde\Theta_2
\end{array}\right)\\
&\geq\min_{i}\min_{\bar{\mathcal{O}}_{\xi_0}}\lambda_i\overline{\tilde{\Theta}^\top_1}\tilde{\Theta}_1\\
&\geq\min_{i}\min_{\bar{\mathcal{O}}_{\xi_0}}\lambda_i\left(\min_{j}|\hat{\xi}_j|\right)^{2M+2}I_{(2d)^{\imath+\jmath}},
\end{align*}
where "$\geq$" is L\"{o}wer order (for specific definition, see \cite{wang}) and the last equality has been explicitly explained on page $12$ of \cite{qian3}. Hence, with finite cover theorem, we have
$$|\mathcal{S}_2|=\left|\left\{\xi\in G,0\leq\overline{A_{\imath\jmath}^\top}A_{\imath\jmath}\leq\frac{\gamma^2}{|k|^{2\tau}}I_{(2d)^{\imath+\jmath}}\right\}\right|\leq c\frac{\gamma^{\frac{1}{M+1}}}{|k|^{\frac{\tau}{M+1}}}.$$
Similarly,
$$|\mathcal{S}_1|=\left|\left\{\xi\in G,|\langle k,\omega(\xi)\rangle|\leq\frac{\gamma}{|k|^\tau}\right\}\right|\leq c\frac{\gamma^{\frac{1}{M+1}}}{|k|^{\frac{\tau}{M+1}}}.$$
%0\leq\overline{A_{\imath\jmath}^\top}A_{\imath\jmath}\leq\frac{\gamma^2}{|k|^{2\tau}}I_{(2d)^{\imath+\jmath}}\}|\leq c\frac{\gamma^{\frac{1}{N+1}}}{|k|^{\frac{\tau}{N+1}}}.$$
Then $|\mathcal{R}_{\nu+1}|<\frac{\gamma_0^{\frac{1}{M+1}}}{|k|^{\frac{\tau}{M+1}}}$. Consequently,
\begin{align*}
|G_0\backslash G_*|=\left|\bigcup_{\nu=0}^\infty\bigcup_{K_{\nu}<|k|\leq K_{\nu+1}}\mathcal{R}_k^{\nu+1}\right|\leq\sum_{\nu=0}^\infty\sum_{K_\nu<|k|\leq K_{\nu+1}}\frac{\gamma_0^{\frac{1}{M+1}}}{|k|^{\frac{\tau}{M+1}}}\rightarrow0, as ~\varepsilon\rightarrow0.
\end{align*}

This completes the proof.
%Consequently,
%\begin{align*}
%|G_0\backslash G_*|\leq\sum_{\nu=0}^\infty\sum_{K_\nu<|k|\leq K_{\nu+1}}|\mathcal{R}_k^{\nu+1}|\leq
%\end{align*}
%Recall $$A_{\imath\jmath}=\sqrt{-1}\langle \frac{k}{|k|},\omega\rangle I_{(2d)^{\imath+\jmath}}+\frac{\tilde{S}_{\imath\jmath}}{|k|},$$ where $\tilde{S}_{\imath\jmath}$  is concerned with $\frac{\partial^2g(0)}{\partial z^2}$ and ${J}$. Observe by (\ref{g+}) that $$|\tilde{S}_{\imath\jmath}|\leq (s_-^{m-2}\mu_-)^\frac{1}{L}.$$
%This together with \textbf{(A1)} yields
%\begin{align}\label{C}
%rank\{C_i:1\leq i\leq (2d)^{\imath+\jmath}\}=(2d)^{\imath+\jmath},
%\end{align}
%where $A_{\imath\jmath}^j$ is the $j$-th column of $A_{\imath\jmath}$, $C_i$ is a column of $\{\partial_\xi^iA_{\imath\jmath}^j:0\leq|i|\leq M\}$, which implies \textbf{(M1)} and \textbf{(M2)} of Lemma 4.1 in \cite{qian3}. Then we can use Lemma 4.1 in \cite{qian3} to complete the proof.
\end{proof}}

\section{Appendix A. }\label{appa}
\textbf{Proof of Proposition  \ref{pro1}.}
\begin{proof}
 Notice that
\begin{align*}
\nabla g(u,v)=\left(u|u|^{2l_0-2},v|v|^{2k_0-2}\right),~~~\nabla g(0,0)=0.
\end{align*}
For $0<\delta<1$, $B_\delta(0)$ denotes the open ball centered at the origin with radius $\delta$. We have that $\nabla g(u,v)-\nabla g(0,0)$ is odd and unequal to zero on $\partial B_\delta(0)$, i.e.,
\begin{align*}
\nabla g(-u,-v)-\nabla g(0,0)=\left(-u|u|^{2l_0-2},-v|v|^{2k_0-2}\right)=-(\nabla g(u,v)-\nabla g(0,0)),
\end{align*}
and
\begin{align*}
\nabla g(u,v)-\nabla g(0,0)\neq0,~~\forall (u,v)\in\partial B_\delta(0).
\end{align*}
It follows from Borsuk's theorem in \cite{mot} that
\begin{align*}
\deg\left(\nabla g(u,v)-\nabla g(0,0),B_\delta(0),0\right)\neq0.
\end{align*}
Let $z=(u,v)$. Obviously, there exist $\sigma=\frac{\min_{z\in B_\delta(0)}\{{(z+{\delta_*})\vert z+{\delta_*}\vert^{2\ell}-z\vert z\vert^{2\ell}}\}}{2{\delta_*}^{2\ell+1}}$ and $L=2\max\{l_0,k_0\}-1$  such that
\begin{align*}
\left\vert\nabla g(z)-\nabla g(z_*)\right\vert\geq\sigma\left\vert z-z_*\right\vert^L,~~z_*\in B_\delta(0), z\in B_\delta(0)\setminus B_{\delta_*}(z_*),
\end{align*}
where~$\delta_*>0$, $B_{\delta_*}(z_*)\subset B_\delta(0)$. So, by Theorem \ref{th1}, the results in Proposition \ref{pro1} hold.

\end{proof}
\section{Appendix B. }\label{appb}
\textbf{Proof of Proposition \ref{pro2}}
\begin{proof}
Note that the motion equation of (\ref{HH}) is
\begin{equation*}
\left\{
\begin{array}{ll}
\dot{x}=\omega,\\
\dot{y}=0,\\
\dot{u}=v^2,\\
\dot{v}=-u^2-\varepsilon^2.
\end{array}
\right.
\end{equation*}
Since for any $\varepsilon\neq0$, the equation $u^2+\varepsilon^2=0$ has no real solution, we see that the Hamiltonian system does not admit any lower-dimensional torus. Let $g(u,v)=\frac{1}{3}u^3+\frac{1}{3}v^3$. By simple calculation, we have
$$\deg(\nabla g(u,v),B_\delta(0),0)=0,$$
which implies that {(A0)} fails. Hence our assumption {(A0)} is sharp to Melnikov's persistence.

\end{proof}

\section*{Acknowledgments}
The first author (Jiayin Du)  is
supported by the Fundamental Research Funds for the Central Universities (Grant number 2412024QD003).
The second author (Shuguan Ji) is supported by National Natural Science Foundation of China (Grant numbers 12225103, 12071065, 11871140),
and the National Key Research and Development Program of China (Grant numbers 2020YFA0713602, 2020YFC1808301). The third author (Yong Li) is supported by National Basic Research Program of China (Grant number 2013CB834100), National Natural Science Foundation of China (Grant numbers 11571065, 11171132, 12071175, 12471183),
and Natural Science Foundation of Jilin Province (Grant number 20200201253JC).

%\section*{Declarations}
%\textbf{Data Availability} Data sharing not applicable to this article as no datasets were generated or analysed during the current study.

%\textbf{Conflict of interest} The authors declare that they have no competing interest regarding this research work.

%\section*{References}

%\section*{References}

\end{document}